\documentclass{amsart}
\usepackage{color,amssymb}

\newtheorem{theorem}{Theorem}[section]

\newtheorem{lemma}[theorem]{Lemma}
\newtheorem{example}[theorem]{Example}
\newtheorem{proposition}[theorem]{Proposition}
\theoremstyle{definition}
\newtheorem{definition}[theorem]{Definition}
\theoremstyle{remark}
\newtheorem{remark}[theorem]{Remark}
\numberwithin{equation}{section}


\newcommand{\Real}{\mathbb R}
\newcommand{\Complex}{\mathbb C}

\newcommand{\To}{\longrightarrow}

\newcommand{\tu}{\tilde{u}}
\newcommand{\pil}{\pi_{\lambda}}
\newcommand{\pas}{\partial_s}

\newcommand{\ve}{\varepsilon}

\newcommand{\C}{\mathbb{C}}
\newcommand{\R}{\mathbb{R}}

\begin{document}

\title[Holomorphic Open Book Decompositions]{Holomorphic Open Book Decompositions}%
\author{Casim Abbas}%
\address{Department of Mathematics, Michigan State University, East Lansing, MI 48824}%
\email{abbas@math.msu.edu}%


\begin{abstract}
Emmanuel Giroux showed that every contact structure on a closed three dimensional manifold is supported by an open book decomposition. We will extend this result by showing that the open book decomposition can be chosen in such a way that the pages are solutions to a homological perturbed holomorphic curve equation. 
\end{abstract}
\maketitle
\tableofcontents
\section{Introduction}
This paper is the starting point of a larger program by the author, Helmut Hofer and Samuel Lisi investigating a perturbed holomorphic curve equation in the symplectisation of a three dimensional contact manifold \cite{AHL-1}, \cite{AHL-2}. One aim of this program is to provide an alternative proof of the Weinstein conjecture in dimension three as outlined in \cite{ACH} complementing Clifford Taubes' gauge theoretical proof \cite{Taubes-1}, \cite{Taubes-2}. A special case of this paper's main result has been used in the proof of the Weinstein conjecture for planar contact structures in \cite{ACH}. Another reason for studying this equation is to construct foliations by surfaces of section with nontrivial genus. This is usually impossible to do with the unperturbed holomorphic curve equation since solutions generically do not exist.\\
Consider a closed three dimensional manifold $M$ equipped with a contact form
$\lambda$. This is a 1-form which satisfies $\lambda\wedge d\lambda\neq 0$ at every point of $M$. We denote the associated contact structure by $\xi=\ker\lambda$ and
the Reeb vector field by $X_{\lambda}$. Recall that the Reeb vector field is defined by the two equations 
\[
i_{X_{\lambda}}d\lambda\,=\,0\ \ \mbox{and}\ \ i_{X_{\lambda}}\lambda\,=\,1.
\]
\begin{definition}{\bf (Open Book Decomposition)}\\
Assume that $K\subset M$ is a link in $M$, and that $\tau:M\backslash K\rightarrow S^1$ is a fibration so
that the fibers $F_{\vartheta}=\tau^{-1}({\vartheta})$ are interiors of compact embedded surfaces $\bar{F}_{\vartheta}$ with $\partial\bar{F}_{\vartheta}=K$, where $\vartheta$ is the coordinate along $K$.  We also assume that $K$ has a tubular neighborhood $K\times D$, $D\subset{\mathbb R}^2$ being the open unit disk, such that $\tau$ restricted to $K\times (D\backslash\{0\})$ is given by
$\tau(\vartheta,r,\phi)=\phi$, where $(r,\phi)$ are polar coordinates on $D$.\\
Then we call $\tau$ an {\sl open book decomposition} of $M$, the link $K$ is called the {\sl binding} of the open book decomposition, and the surfaces $F_{\vartheta}$ are called the {\sl pages} of the open book decomposition.
\end{definition}
It is a well-known result in three dimensional topology that every closed three dimensional orientable manifold admits an open book decomposition. Indeed, J. Alexander proved the following theorem in 1923, see \cite{Alexander} or \cite{Rolfsen}:
\begin{theorem}\label{Alexanderstheorem}
Every closed, orientable manifold $M$ of dimension $3$ is
diffeomorphic to $$W(h)\,{\bigcup}_{\mbox{Id}}\,(\partial W\times D^2)$$ where $D^2$ is the closed unit disk in ${\mathbb R}^2$, $W$ is an
orientable surface with boundary and $h:W\rightarrow W$ is an
orientation preserving diffeomorphism which restricts to the
identity near $\partial W$. $W(h)$ denotes the manifold obtained
from $W\times[0,2\pi]$ by identifying $(x,0)$ with $(h(x),2\pi)$.
\end{theorem}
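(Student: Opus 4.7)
The plan is to follow Alexander's original strategy of lifting the standard open book decomposition of $S^3$ via a branched covering $p : M \to S^3$. The starting ingredient is Alexander's 1920 theorem that every closed orientable $3$-manifold $M$ is presentable as a branched cover of $S^3$ branched over some link $L \subset S^3$. On the target, view $S^3 \subset \C^2$ and set $\tau_0(z_1,z_2) := z_2/|z_2|$, defined on the complement of the unknotted binding $K_0 = \{z_2 = 0\}$; the pages are disks and, in polar coordinates $(\vartheta,r,\phi)$ on a tubular neighborhood $K_0 \times D$, one has the normal form $\tau_0(\vartheta,r,\phi) = \phi$ required by the definition.

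The decisive preparatory step is to isotope $L$ so that it is positively transverse to every page of $\tau_0$. This is precisely the content of Alexander's braid theorem: every link in $S^3$ may be presented as a closed braid about an unknotted axis. After this isotopy, $L$ is disjoint from $K_0$ and, in coordinates $(z,y)$ adapted to $L$, one may arrange that $\tau_0 = y$ along $L$. I would then set $K := p^{-1}(K_0)$ and $\tau := \tau_0 \circ p : M \setminus K \to S^1$. Because $p$ is unbranched over a neighborhood of $K_0$, the set $K$ is a genuine link and $p$ is a local diffeomorphism there, so the local normal form of $\tau$ near $K$ is inherited directly from that of $\tau_0$. Away from $L$ the map $p$ is a local diffeomorphism, while along $p^{-1}(L)$ the cover has the local form $(z,y)\mapsto (z^k,y)$, so that $\tau = y$ upstairs. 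Thus $\tau$ is a submersion on all of $M \setminus K$ and hence a locally trivial fibration, its fibers being smooth compact surfaces (branched covers of the disk pages of $\tau_0$, branched over $L$) with boundary $K$.

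It remains to pass from the open book data $(K,\tau)$ to the mapping torus description in the statement. Let $W := \overline{F_{\vartheta_0}}$ be a closed page. Since $\tau : M \setminus K \to S^1$ is a fibration with fiber the interior of $W$ and $M$ is orientable, a choice of horizontal distribution transverse to the pages produces an orientation preserving monodromy $h : W \to W$, which may be modified in a collar of $\partial W$ so as to coincide with the identity there (using the product structure $K \times D$ near the binding). This exhibits $M \setminus (K \times D)$ as $W(h)$, and regluing the tubular neighborhood $K \times D^2 \cong \partial W \times D^2$ via the identity yields the desired decomposition. The main obstacle is not in this final verification, which is local and largely bookkeeping, but in the two classical inputs invoked at the outset — the existence of the branched cover presentation and Alexander's braid normal form — whose combination is what makes the open book downstairs lift cleanly to an open book upstairs.
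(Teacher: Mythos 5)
The paper does not actually prove Theorem~\ref{Alexanderstheorem}: it is stated with a reference to Alexander's 1923 paper and Rolfsen's book, followed immediately by a \textsc{qed} mark. Your proposal reconstructs the classical Alexander argument from those references --- realize $M$ as a branched cover $p : M \to S^3$ branched over a link $L$, isotope $L$ into closed-braid position with respect to the standard open book $\tau_0$ on $S^3$ (Alexander's braid theorem), and pull $\tau_0$ back along $p$ --- so the approach coincides with the one the paper implicitly invokes. The structure of the argument is sound: transversality of the braided branch locus to the pages of $\tau_0$ is exactly what makes $\tau = \tau_0 \circ p$ a submersion on $M \setminus K$ (including along $p^{-1}(L)$, where the local model $(z,y) \mapsto (z^k,y)$ with $y = \tau_0$ applies), and disjointness of $L$ from a tubular neighborhood of $K_0$ gives an honest covering over $K_0 \times D$, from which the normal form of $\tau$ near $K = p^{-1}(K_0)$ is inherited. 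Two minor points you should make explicit if you were to write this out in full: (i) Alexander's branched-cover and braid theorems are PL statements, so one needs the (automatic in dimension three) passage from PL to smooth to match the theorem as phrased; and (ii) the claim that the monodromy $h$ is orientation preserving should be justified by orientability of $M$ --- an orientation-reversing monodromy would produce a non-orientable mapping torus --- rather than attributed to the choice of horizontal distribution, which by itself only determines $h$ up to isotopy.
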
\qed\\

The above decomposition is an open book decomposition, the pages are given by
\[
 F_{\vartheta}:=(W\times\{\vartheta\})\,{\bigcup}_{\mbox{Id}}\,(\partial W\times I_{\vartheta})\ ,\ 0\le\vartheta<2\pi,
\]
where $I_{\vartheta}:=\{re^{i\vartheta}\in D^2\,|\,0<r< 1\}$, and the binding is given by $K=\partial W\times\{0\}$. Note that we allow $\partial W$ to be disconnected. \\

Emmanuel Giroux introduced the notion of an open book decomposition supporting a contact structure:
\begin{definition}{\bf (Supporting Open Book Decomposition \cite{Giroux})}\\
Assume that $M$ is a closed three dimensional manifold endowed with a contact form $\lambda$. Let $\tau$ be an open book decomposition with binding $K$. We say that $\tau$ {\sl supports the contact structure} $\xi$ if there exists a contact form $\lambda'$ with the same kernel as $\lambda$ so that $d\lambda'$  induces an area-form on each fiber
$F_{\vartheta}$ with $K$ consisting of closed orbits of the Reeb vector field $X_{\lambda'}$, and
$\lambda'$ orients $K$ as the boundary of
$(F_{\vartheta},d\lambda')$. 
\end{definition}

We will refer to a contact form $\lambda'$ above as a '{\sl Giroux contact form}'. Note that $\lambda'$ is not unique and that it is in general different from the original contact form $\lambda$. The following theorem by E. Giroux guarantees existence of such open book decompositions, and it contains a uniqueness statement as well, see \cite{Giroux}:

\begin{theorem}\label{giroux}
Every co-oriented contact structure $\xi=\ker\lambda$ on a closed three dimensional manifold is supported by some open book. Conversely, if two contact structures are supported by the same open book then they are diffeomorphic.
\end{theorem}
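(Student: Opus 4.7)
The plan is to prove the two directions separately. For existence, I would follow the strategy based on convex surface theory. First I would construct a \emph{contact cell decomposition} of $(M,\xi)$: a CW decomposition of $M$ whose $1$-skeleton $G$ is a Legendrian graph, whose $2$-cells are convex discs with Legendrian boundary, and whose $3$-cells are standard Darboux balls. Existence of such a decomposition follows by starting with an arbitrary smooth triangulation, perturbing the $1$-skeleton to be Legendrian using an $h$-principle for Legendrian immersions (or a direct interpolation argument in Darboux charts), and then appealing to Giroux's genericity result that an embedded surface with Legendrian boundary can be $C^\infty$-perturbed to be convex. A further subdivision of the $2$-cells ensures each has the ``trivial'' dividing set: a single arc connecting the two endpoints of a bypass-free Legendrian unknot boundary.

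From this contact cell decomposition I would then build the open book. Take a small standard contact tubular neighborhood $N(G)$ of the Legendrian $1$-skeleton; its boundary $\Sigma=\partial N(G)$ is a closed convex surface in $M$ whose dividing set $\Gamma_\Sigma\subset\Sigma$ is a multi-curve. The closure of $M\setminus N(G)$ is a disjoint union of standard contact $3$-balls (the thickened $3$-cells), and on each of them the characteristic foliation on the boundary piece is tight. I would declare $K:=\Gamma_\Sigma$ (transported slightly into $M$) to be the binding, and construct the pages by taking the closures of the components of $\Sigma\setminus\Gamma_\Sigma$, extending them across $N(G)$ by following the contact framing along the ribbon of $G$, and across the $3$-cells by a standard model. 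By shrinking $N(G)$ and rescaling $\lambda$ in an adapted neighborhood of $K$, one produces a Giroux contact form $\lambda'=f\lambda$ with $\ker\lambda'=\xi$, $d\lambda'|_{\text{page}}>0$, and $K$ Reeb-invariant with the correct orientation.

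For the converse, suppose $\lambda_0$ and $\lambda_1$ are two contact forms (with possibly different kernels) both making the same open book $(K,\tau)$ a Giroux open book. The key observation is that the space of Giroux contact forms adapted to a fixed open book is convex, or at least path connected: given two such forms, one writes each as a model form near $K$ plus a pull-back contribution along the pages, interpolates the page-area forms using the fact that the space of positive area forms on a surface with fixed volume is convex, and interpolates the normal models near $K$. This yields a smooth family $\lambda_t$ of contact forms, all with the same supporting open book; in particular $\xi_t=\ker\lambda_t$ is a $1$-parameter family of contact structures. Gray's stability theorem then produces an isotopy $\varphi_t$ of $M$ with $\varphi_t^*\xi_t=\xi_0$, giving the desired diffeomorphism $\varphi_1:(M,\xi_0)\to(M,\xi_1)$.

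The main obstacle is clearly the existence half, and within it the construction of the contact cell decomposition together with the verification that the dividing set on $\partial N(G)$ really does assemble into a fibered link. The Legendrian perturbation of the $1$-skeleton is delicate because one must simultaneously preserve the combinatorial structure of the triangulation and control the Thurston--Bennequin framing so that, after subdividing, each $2$-cell becomes convex with a bypass-free boundary. Equally subtle is showing that the complementary region $M\setminus N(G)$ is a union of \emph{tight} Darboux balls, since this is what lets one extend the page fibration across the $3$-cells; this relies on Eliashberg's classification of tight contact structures on $B^3$ with prescribed convex boundary. Once these combinatorial--topological inputs are in place, the construction of $\lambda'$ and the fibration $\tau$ is essentially explicit.
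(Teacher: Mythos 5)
This theorem is not proved in the paper at all: it is stated as Theorem~\ref{giroux} with a \verb|\qed| and a citation to Giroux's ICM address, and the rest of the article treats it as a black box (indeed, Proposition~\ref{localnormalform-2} invokes both the existence and the uniqueness halves). So there is no internal proof to compare against.

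That said, your sketch is a faithful outline of Giroux's own argument, and both halves are in essentially the right shape. For existence you correctly identify the key steps: build a contact cell decomposition by perturbing a triangulation so that the $1$--skeleton $G$ is Legendrian, the $2$--cells are convex with $\mathrm{tb}=-1$ Legendrian boundary (hence trivial dividing set), and the $3$--cells are Darboux balls; then take the ribbon $R$ of $G$ (the small piece of $\partial N(G)$ compressed onto $G$) as a page and $K=\partial R$ as the binding. One remark on terminology: the binding is usually presented as $\partial R$ rather than as ``$\Gamma_\Sigma$ pushed into $M$,'' though after shrinking $N(G)$ these describe the same link, and $R_\pm$ (the two components of $\Sigma\setminus\Gamma_\Sigma$) serve as the pages at angles $0$ and $\pi$. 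The one genuinely delicate point, which you correctly flag, is verifying that the complementary handlebody $M\setminus N(R)$ carries a product fibration by pages; this does indeed go through Eliashberg's uniqueness of the tight contact structure on $B^3$ with convex boundary, applied cell by cell. For the converse, your strategy (interpolate the two Giroux forms after standardizing near $K$, then apply Gray stability) is the standard one. Your hedge ``convex, or at least path connected'' is appropriate: the naive convex combination of two adapted forms need not be contact without first arranging that both forms agree on a collar of the binding, which is exactly the normalization step you describe. With that caveat made precise, the argument closes.

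In short: your proposal is a correct outline of the known proof, but the paper itself does not reprove Giroux's theorem; it only uses it. If you want to stay within the spirit of the paper, the honest move is to cite Giroux (and, for expository details, Etnyre's lecture notes on open book decompositions) rather than to re-derive the full statement.
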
\qed\\

In the topological category it is possible to modify an open book decomposition such that the pages of the new decomposition have lower genus at the expense of increasing the number of connected components of $K$. It was not known for some time whether a similar statement can also be made in the context of supporting open book decompositions. In particular, it was unclear whether every contact structure is supported by an open book decomposition whose pages are punctured spheres ({\sl planar pages}). The author and his collaborators could resolve the Weinstein conjecture for contact forms inducing a 'planar contact structure' in 2005 (see \cite{ACH}). So the question whether all contact structures are planar became a priority, which prompted John Etnyre to address it in \cite{Etnyre}. He showed that overtwisted contact structures always admit supporting open book decompositions with planar pages, but many contact structures do not. Since then planar open book decompositions have become an important tool in contact geometry.\\

In this paper we will prove that every contact structure has a supporting open book decomposition such that the pages solve a {\it homological perturbed Cauchy-Riemann type equation} which we will now describe after introducing some notation. We write $\pi_{\lambda}=\pi:TM\rightarrow \xi$ for the projection along the Reeb vector field $X_{\lambda}$. Fix a complex multiplication $J:\xi\rightarrow\xi$ so that the map
$\xi\oplus\xi\rightarrow {\mathbb R}$ defined by
$$
(h,k)\rightarrow d\lambda(h,Jk)
$$
defines a positive definite metric on the fibers. We will call such complex multiplications compatible (with $d\lambda$). The equation of interest here is the following
nonlinear first order elliptic system. The solutions consist of 5-tuplets $(S,j,\Gamma,\tilde{u},\gamma)$ where $(S,j)$ is a closed Riemann surface with complex structure $j$, $\Gamma\subset S$ is a finite subset, $\tilde{u}=(a,u):\dot{S}\rightarrow {\mathbb R}\times
M$ is a proper map with $\dot{S}=S\setminus\Gamma$, and $\gamma $ is a one-form on
$S$ so that
\begin{equation}\label{eq1}
\left\{
\begin{array}{ccc}
&\pi\circ Tu\circ j = J\circ \pi\circ Tu\ \hbox{ on }\ \dot{S}&\\
&(u^{\ast}\lambda)\circ j = da +\gamma\ \hbox{ on }\ \dot{S}&\\
& d\gamma= d(\gamma\circ j)=0\ \hbox{ on }\ S&\\
&E(\tilde{u})<\infty.&
\end{array}
\right.
\end{equation}
Here the {\it energy} $E(\tilde{u})$ is defined by
\begin{eqnarray*}
E(\tilde{u})=\sup_{\varphi\in\Sigma}\ \int_{\dot{S}}\
\tilde{u}^{\ast}d(\varphi\lambda),
\end{eqnarray*}
where $\Sigma$ consists of all smooth maps $\varphi:{\mathbb
R}\rightarrow [0,1]$ with $\varphi'(s)\geq 0$ for all $s\in
{\mathbb R}$.

Note that the above equation reduces to the usual pseudoholomorphic curve equation in the symplectisation $\R\times M$ if we set $\gamma=0$. The following proposition, which is a modification of a result
by Hofer, \cite{Hofer-Weinstein-conj}, shows that solutions to the problem (\ref{eq1}) approach cylinders over periodic orbits of the Reeb vector field.

\begin{proposition}
Let $(M,\lambda)$ be a closed three-dimensional manifold equipped
with a contact form $\lambda$. Then the associated Reeb vector
field has periodic orbits if and only if the associated PDE-problem (\ref{eq1}) has a non-constant solution.
\end{proposition}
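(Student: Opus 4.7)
The plan is to treat the two implications separately. The reverse direction is constructive; the forward direction adapts Hofer's asymptotic analysis from \cite{Hofer-Weinstein-conj}.

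\emph{From periodic orbits to solutions.} Suppose $x:\R\to M$ is a $T$-periodic orbit of $X_\lambda$. Take $(S,j,\Gamma)=(S^2,i,\{0,\infty\})$, so that $\dot S\cong\R\times S^1$ with cylindrical coordinates $(s,t)$ and $j\pas=\pat$. Setting $\gamma\equiv 0$ and $\tilde u(s,t):=(Ts,\,x(Tt))$, the identities $\pas u\equiv 0$, $\pat u=TX_\lambda(u)$, $i_{X_\lambda}\lambda=1$ and $i_{X_\lambda}d\lambda=0$ give at once $\pi\circ Tu\equiv 0$, $(u^*\lambda)\circ j=T\,ds=da+\gamma$, $d\gamma=d(\gamma\circ j)=0$ and $E(\tilde u)<\infty$. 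This $\tilde u$ is visibly non-constant.

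\emph{From solutions to periodic orbits.} Conversely, let $\tilde u=(a,u)$ be a non-constant finite-energy solution of (\ref{eq1}) and assume, for contradiction, that $X_\lambda$ has no periodic orbit. The third line of (\ref{eq1}) says $\gamma$ is closed and co-closed on the closed Riemann surface $(S,j)$, hence harmonic, so it is smooth and uniformly bounded. I first rule out $\Gamma=\emptyset$: applying $\circ j$ to the second equation gives $u^*\lambda=-da\circ j-\gamma\circ j$, so $\int_S u^*d\lambda=-\int_S d(da\circ j)-\int_S d(\gamma\circ j)=0$ by Stokes on the closed $S$. Since $u^*d\lambda\ge 0$ pointwise (from the first equation and $d\lambda$-compatibility of $J$), this forces $\pi\circ Tu\equiv 0$, so that $u$ factors through a single Reeb trajectory. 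By assumption that trajectory is an injective $\R\to M$, so $u$ lifts to a function $\tau:S\to\R$ with $u^*\lambda=d\tau$; then $d\tau\circ j=da+\gamma$ forces $\tau$ to be harmonic on $S$, hence constant, so $u$ is constant, and a further direct argument then shows $\gamma=0$ and $a$ is constant, contradicting non-constancy of $\tilde u$.

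Hence $\Gamma\ne\emptyset$, and the main work is the asymptotic analysis near a puncture $z_0\in\Gamma$. In cylindrical coordinates $(s,t)\in[R,\infty)\times S^1$, the charge $m:=\int_{\{s\}\times S^1}(u^*\lambda-\gamma)$ is independent of $s$ by the second equation and Stokes' theorem; harmonicity of $\gamma$ ensures that $\int_{\{s\}\times S^1}\gamma$ is well-controlled as $s\to\infty$. If $m$ vanishes at every puncture, $\tilde u$ extends smoothly across $\Gamma$ and we are back in the ruled-out closed case. Otherwise some puncture has $m\ne 0$, and I would apply Hofer's bubbling-off lemma to obtain uniform gradient bounds along the end, then extract a sequence $s_n\to\infty$ along which $t\mapsto u(s_n,t)$ $C^\infty$-converges to a $|m|$-periodic orbit of $X_\lambda$, contradicting our assumption. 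The hard part is verifying that Hofer's $\eps$-regularity estimates and soft rescaling procedure carry through in the presence of $\gamma$; since $\gamma$ is a bounded, smooth, harmonic inhomogeneity with a Laurent-type expansion near each puncture, this should be a technical adaptation of \cite{Hofer-Weinstein-conj} rather than a conceptual change.
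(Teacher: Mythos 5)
Your ``orbit $\Rightarrow$ solution'' direction via the cylinder $\tilde u(s,t)=(Ts,x(Tt))$ over a $T$-periodic orbit is correct and fills in the direction the paper leaves tacit. Your treatment of the $\Gamma=\emptyset$ case is also correct, and takes a genuinely different route from the paper: you argue by contrapositive, assume no periodic orbit, and use the resulting injectivity of the Reeb trajectory $x$ to lift $u$ directly to a function $\tau\colon S\to\R$ with $u^{\ast}\lambda=d\tau$, whence $\tau$ is harmonic on the closed surface and hence constant, forcing $\gamma=0$ and $a$ constant. The paper instead argues directly by lifting to the universal cover, observing that the resulting primitive $f$ cannot descend to $S$, and using a loop joining two lifts $z_0,z_1$ with $f(z_0)\neq f(z_1)$ to exhibit a period for $x$. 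Your version is slightly more economical (no universal cover needed) at the cost of being non-constructive; both are sound.

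The gap is in the $\Gamma\neq\emptyset$ case. You flag ``the hard part'' as verifying that Hofer's $\ve$-regularity and rescaling arguments survive the inhomogeneity $\gamma$, and you gesture at this being a ``technical adaptation'' without carrying it out. But this adaptation is unnecessary, and that is precisely the point the paper makes in one sentence (``Here we use that $\gamma$ is exact near the punctures''): since $\gamma$ is a closed, smooth $1$-form on the \emph{closed} surface $S$, it is exact on a contractible neighborhood of each puncture, say $\gamma=dh$ with $h$ bounded and smooth. Replacing $a$ by $a+h$ on that neighborhood turns the second line of (\ref{eq1}) into $(u^{\ast}\lambda)\circ j=d(a+h)$ with no $\gamma$-term at all, i.e.\ the honest pseudoholomorphic curve equation, so the asymptotic results of \cite{Hofer-Weinstein-conj} and \cite{HWZ-2} apply verbatim rather than after modification. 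Treating $\gamma$ as a perturbation to be controlled is the wrong mental model here. A secondary error: the quantity $m=\int_{\{s\}\times S^1}(u^{\ast}\lambda-\gamma)$ is not $s$-independent. Since $\gamma$ is exact near the puncture, $\int_{\{s\}\times S^1}\gamma=0$, so $m=\int_{\{s\}\times S^1}u^{\ast}\lambda$; and because $u^{\ast}d\lambda\ge 0$, this integral is monotone non-decreasing in $s$, not constant. It does converge as $s\to\infty$ (by the energy bound) to the asymptotic period, which is what you actually need, but the $s$-independence claim as stated is false.
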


\begin{proof}
Let $(S,j,\Gamma,\tilde{u},\gamma)$ be a non-constant solution of (\ref{eq1}).
If $\Gamma\neq \emptyset$ then the results in \cite{Hofer-Weinstein-conj} and \cite{HWZ-2} imply that near a puncture the solution is asymptotic to a periodic orbit (see also \cite{A-notes} for a complete proof). Here we use that $\gamma$ is exact near the punctures. The aim is now to show that in the absence of punctures the map $a$ is constant while the image of $u$ lies on a periodic Reeb orbit. Assume that $\Gamma=\emptyset$.\\
Since
$$
u^{\ast}\lambda = -da\circ j -\gamma\circ j,
$$
we find after applying $d$ that
$$
\Delta_j a =-d(da\circ j)=u^{\ast}d\lambda.
$$
In view of the equation $\pi\circ Tu\circ j=J\circ \pi\circ Tu$ we
see that $u^{\ast}d\lambda$ is a non-negative integrand. Applying
Stokes' theorem we obtain $\int_S u^{\ast}d\lambda =0$ implying
that
$$
\pi\circ Tu\equiv 0.
$$
Hence $a$ is a harmonic function on $S$ and therefore constant. So far, we also know that the image of $u$ lies on a Reeb trajectory, and it remains to show that this trajectory is actually periodic.\\
Let $\tau:\tilde{S}\rightarrow S$ be the universal covering map. The complex structure $j$ lifts to a complex structure $\tilde{j}$ on $\tilde{S}$. Pick now smooth functions $f,g$ on $\tilde{S}$ such that
\[
dg=\tau^{\ast}\gamma=:\tilde{\gamma}\ ,\ -df=\tau^{\ast}(\gamma\circ j)=\tilde{\gamma}\circ\tilde{j}.
\]
Then the map $u\circ \tau:\tilde{S}\rightarrow M$ satisfies
\[
(u\circ\tau)^{\ast}\lambda\,=\,df.
\]
The image of $u\circ \tau$ lies on a trajectory $x$ of the Reeb vector field in view of
\[
D(u\circ\tau)(z)\zeta\,=\,Df(z)\zeta\cdot X_{\lambda}((u\circ\tau)(z)),
\]
hence $(u\circ\tau)(z)=x(h(z))$ for some smooth function $h$ on $\tilde{S}$, and it follows that, after maybe adding a constant to $f$, we have
\[
(u\circ\tau)(z)\,=\,x(f(z)).
\]
The function $f$ does not descent to $S$. If it did it would have to be constant since it is harmonic. On the other hand this would imply that $u$ is constant in contradiction to our assumption that it is not. Therefore, there is a point $q\in S$ and two lifts $z_0,z_1\in\tilde{S}$ such that $f(z_0)>f(z_1)$. Let $\ell:S^1\rightarrow S$ be a loop which lifts to a path $\alpha:[0,1]\rightarrow\tilde{S}$ with $\alpha(0)=z_0$ and $\alpha(1)=z_1$. Considering the map
\[
v:=u\circ\ell:S^1\To M
\]
we see that $v(t)=(u\circ\tau\circ\alpha)(t)=x(f(\alpha(t)))$ and $x(f(z_0))=x(f(z_1))$, i.e. the trajectory $x$ is a periodic orbit. Hence the image of $u$ is a periodic orbit for the Reeb vector
field.
\end{proof}

The following is the main result of this paper.
\begin{theorem}\label{main-thm}
Let $M$ be a closed three dimensional manifold, and let $\lambda'$ be a contact form on $M$. Then the following holds for a suitable contact form $\lambda=f\,\lambda'$ where $f$ is a positive function on $M$: There exists a smooth family $(S,j_{\tau},\Gamma_{\tau},\tilde{u}_{\tau}=(a_{\tau},u_{\tau}),\gamma_{\tau})_{\tau\in S^1}$ of solutions to (\ref{eq1}) for a suitable compatible complex structure $J:\ker\lambda\rightarrow\ker\lambda$ such that 
\begin{itemize}
\item all maps $u_{\tau}$ have the same asymptotic limit $K$ at the punctures, where $K$ is a finite union of periodic trajectories of the Reeb vector field $X_{\lambda}$,
\item $$u_{\tau}(\dot{S})\cap u_{\tau'}(\dot{S})=\emptyset\ \ \mbox{if}\ \ \tau\neq\tau'$$
\item $$M\backslash K\,=\,\bigcup_{\tau\in S^1}u_{\tau}(\dot{S})$$
\item the projection $P$ onto $S^1$ defined by $p\in u_{\tau}(\dot{S})\mapsto \tau$ is a fibration
\item The open book decomposition given by $(P,K)$ supports the contact structure $\ker\lambda$, and $\lambda$ is a Giroux form.
\end{itemize}  
\end{theorem}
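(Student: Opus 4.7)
The plan is to bootstrap Giroux's theorem (Theorem \ref{giroux}) into a holomorphic parametrization of the pages. Applying it to $\ker\lambda'$ gives an open book projection $P_0:M\setminus K\to S^1$ together with a Giroux contact form $\lambda_0=g\lambda'$ whose Reeb field is transverse to each page $F_\tau=P_0^{-1}(\tau)$ and whose exterior derivative is an area form there. Fix once and for all a closed oriented surface $S$ of the same genus as the pages and a finite set $\Gamma\subset S$ in bijection with the components of $K$, so that $\dot S:=S\setminus\Gamma$ is diffeomorphic to any page, and choose a smooth family of diffeomorphisms $u_\tau:\dot S\to F_\tau$ which near $\Gamma$ agree with the standard tubular neighbourhood model of the open book.

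Next, pick a compatible $J$ on $\xi=\ker\lambda$, where $\lambda=f\lambda'$ is the conformal rescaling still to be determined. Because $X_\lambda$ is transverse to each $F_\tau$, the projection $\pi|_{TF_\tau}:TF_\tau\to\xi$ is a bundle isomorphism and transports $J$ to a complex structure $J_\tau$ on the page; pulling back through $u_\tau$ defines $j_\tau$ on $\dot S$, which the tubular model forces to extend smoothly over $\Gamma$ to a complex structure on the closed Riemann surface $S$. The first equation of (\ref{eq1}) then holds by construction, and what remains is to produce $a_\tau$ and a smooth harmonic 1-form $\gamma_\tau$ on $(S,j_\tau)$ with $u_\tau^{\ast}\lambda\circ j_\tau=da_\tau+\gamma_\tau$ on $\dot S$. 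A necessary condition for this is that $u_\tau^{\ast}\lambda\circ j_\tau$ be closed on $\dot S$, which amounts to a scalar linear PDE for $f$ on each page. Together with an asymptotic matching condition near $K$ forcing $\tilde u_\tau$ to have finite energy and converge to a cylinder over a periodic Reeb orbit, in the sense of the Hofer asymptotic analysis invoked in the preceding proposition, this pins down $f$ and the leading behaviour of $a_\tau$. Once the PDE is solved, subtracting an explicit asymptotic model from $u_\tau^{\ast}\lambda\circ j_\tau$ yields a smooth closed 1-form on all of $S$, whose Hodge decomposition on $(S,j_\tau)$ then supplies $\gamma_\tau$ as the harmonic representative and $a_\tau$ as a primitive of the remainder.

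The main obstacle is producing $f$ globally on $M\setminus K$: one must simultaneously solve the closedness PDE on every page, match the solution to the Reeb model on a collar of $K$, and keep it strictly positive and smoothly dependent on $\tau$. A plausible route is to prescribe $f$ in the collar directly from the model and then invoke Fredholm theory for Riemann surfaces with cylindrical ends on the complement, possibly coupled with a continuation or implicit function theorem argument to close the loop. Granted such an $f$, the remaining bullet points are largely bookkeeping: disjointness of the $u_\tau(\dot S)$ and the equality of their union with $M\setminus K$ are restatements of the property that the pages $F_\tau$ foliate $M\setminus K$; the projection $P$ of the theorem coincides tautologically with $P_0$; and $\lambda=f\lambda'$ is a Giroux form provided $f$ is arranged near $K$ so that $K$ remains a union of $X_\lambda$-orbits and $d\lambda$ stays positive on the pages.
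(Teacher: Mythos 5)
Your strategy is to keep the Giroux pages $u_\tau$ fixed as the holomorphic maps and to absorb all the freedom into the conformal factor $f$, hoping to arrange $d\bigl(u_\tau^{\ast}\lambda\circ j_\tau\bigr)=0$ on every page simultaneously. This is not what the paper does, and it runs into an obstruction that the paper is at pains to avoid. The condition $d(u_\tau^{\ast}\lambda\circ j_\tau)=0$ is, on each page, co-closedness of $u_\tau^{\ast}\lambda$ with respect to $j_\tau$; but $j_\tau$ itself is defined through the projection $\pi_\lambda$ along the Reeb field of $\lambda=f\lambda'$, so the ``PDE for $f$'' is nonlinear, and in addition the same $f\colon M\to(0,\infty)$ must solve a coupled one-parameter family of such equations (one for each page), matched with a prescribed model on a collar of $K$. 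No Fredholm or continuation argument is offered to actually solve this system, and there is no reason to expect it to be solvable: the paper explicitly emphasizes that ``generically there are no pseudoholomorphic curves on punctured surfaces with genus which are transverse to the Reeb vector field,'' so the Giroux leaves themselves will not be solutions of (\ref{eq1}) for any reasonable $(\lambda,J)$, harmonic form or not.

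The paper's actual route inverts your logic: one fixes $\lambda=\lambda_\delta$ in advance (Proposition \ref{localnormalform}, which builds $\lambda_\delta$ with an explicit normal form near the binding), and observes that for the \emph{confoliation} form $\lambda_0$ the Giroux leaves are genuinely $\tilde J_0$-holomorphic with $\gamma=0$, because $\ker\lambda_0$ is tangent to the pages away from $K$ (Proposition \ref{first-solution}). The implicit function theorem (Theorem \ref{IFT}) is then applied not to $f$ but to the \emph{leaf}: one makes the ansatz $u_{\delta,\tau}(z)=\phi_{f_{\delta,\tau}(z)}(u_0(z))$ and $a_{\delta,\tau}=a_0+b_{\delta,\tau}$, so the curve is moved along the Reeb flow and the harmonic form $\gamma_{\delta,\tau}$ appears as part of the unknown. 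This is precisely where the $2g$-dimensional space of harmonic forms is needed to make the linearized operator surjective. Finally, going from the local family produced by the IFT to the global fibration of $M\setminus K$ requires the compactness result (Theorem \ref{compactness-result}), which is the bulk of Section \ref{compactness-section} and rests on Siefring's relative asymptotics, an $L^\infty$ bound on $f_\tau$, and a bootstrap through the Beltrami equation. Your proposal omits this step entirely; without it there is no argument that the local families patch into a foliation of all of $M\setminus K$. Your remark that the complex structures $j_\tau$ extend smoothly over $\Gamma$ also requires $J$ to be chosen as in (\ref{cplx-str-normal-form}) near the binding; an arbitrary compatible $J$ will not do.

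In short: the gap is that you attempt to solve for $f$ so that the Giroux pages themselves become solutions of (\ref{eq1}), whereas the correct (and, as far as is known, necessary) mechanism is to fix $\lambda$, use the degenerate confoliation limit to obtain a seed solution with $\gamma=0$, and then deform the \emph{curve} (not $\lambda$) via the implicit function theorem, followed by a nontrivial compactness argument.
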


Here is a very brief outline of the argument. The reader is invited to skip foward to the section 'Conclusion' to see in more detail how all the partial results of this paper are tied together to prove the main result. In section \ref{existence} we will find a Giroux contact form which has a certain normal form near the binding. Following an argument by Chris Wendl \cite{Wendl}, \cite{Wendl-2} we will then {\it almost} be able to turn the Giroux leaves into solutions of (\ref{eq1}) without harmonic form except for the fact that we have to accept a confoliation form instead of a contact form. Pick one of these Giroux leaves as a starting point. The next step is to prove a result which permits us to perturb the Giroux leaf into a genuine solution of (\ref{eq1}) while simultaneously perturbing the confoliation form slightly into a contact form. This is where the harmonic form in (\ref{eq1}) is required. We actually obtain a local family of nearby solutions, not just one. In section \ref{compactness-section} we prove a compactness result which extends the local family of solutions into a global one. The remarkable fact is that there is a compactness result in the context of this paper although there is none in general for the perturbed holomorphic curve equation. The special circumstances in this paper imply a crucial apriori bound which implies that a sequence of solutions has a pointwise convergent subsequence with a measureable limit. The objective is then to show that the regularity of this limit is much better, it is actually smooth.\\

We consider two solutions $(S,j,\Gamma,\tilde{u},\gamma)$ and
$(S',j',\Gamma',\tilde{u}',\gamma')$ equivalent if there exists a
biholomophic map $\phi:(S,j)\rightarrow (S',j')$ mapping $\Gamma$
to $\Gamma'$ (preserving the enumeration) so that
$\tilde{u}'\circ\phi=\tilde{u}$. We will often identify a solution $(S,j,\Gamma,\tu,\gamma)$ of (\ref{eq1}) with its equivalence class
$[S,j,\Gamma,\tilde{u},\gamma]$. We note that we have a natural
${\mathbb R}$-action on the solution set by associating to
$c\in{\mathbb R}$ and $[S,j,\Gamma,\tilde{u},\gamma]$ the new
solution
$$
c+[S,j,\Gamma,\tilde{u},\gamma]= [S,j,\Gamma,(a+c,u),\gamma]\ ,\ \tu=(a,u).
$$
A crucial concept for our discussion will be the notion of a
finite energy foliation ${\mathcal F}$.
\begin{definition}{\bf (Finite Energy Foliation)}\\
A foliation ${\mathcal F}$ of ${\mathbb R}\times M$ is called a
{\it finite energy foliation} if every leaf $F$ is the image of an
embedded solution $[S,j,\Gamma,\tilde{u},\gamma]$ of the equations (\ref{eq1}), i.e.
$$
F=\tilde{u}(\dot{S}),
$$
so that $u(\dot{S})\subset M$ is transverse to the Reeb vector field, and for every leaf $F\in{\mathcal F}$ we also have $c+F\in{\mathcal F}$
for every $c\in{\mathbb R}$, i.e. the foliation is ${\mathbb
R}$-invariant.
\end{definition}

We recall the concept of a global surface of section. Let $M$ be a closed three-manifold
and $X$ a nowhere vanishing smooth vector field.

\begin{definition}{\bf (Surface of section)}\\
a) A local surface of section for $(M,X)$ consists of an embedded
compact surface $\Theta\subset M$ with boundary, so that
$\partial\Theta$ consists of a finite union of periodic orbits
(called the binding orbits). In addition the interior
$\dot{\Theta}=\Theta\setminus\partial\Theta$
is transverse to the flow. \\
b) A local surface of section is called a global surface of
section if in addition every orbit other than a binding orbit hits
$\dot{\Theta}$ in forward and backward time. In addition the
globally defined return map $\Psi:\dot{\Theta}\rightarrow \dot{\Theta}$ has a
bounded return time, i.e. there exists a constant $c>0$ so that
every $x\in\dot{\Theta}$ hits $\dot{\Theta}$ again in forward time not
exceeding $c$.
\end{definition}

Using proposition \ref{localnormalform-2} below, the existence part of Giroux's theorem can be rephrased as follows:

\begin{theorem}\label{Giroux-2}
Let $M$ be a closed orientable three-manifold and $\tilde{\lambda}$ a
contact form on $M$. Then there exists a smooth function $f:M\rightarrow
(0,\infty)$ so that the contact form $\lambda=f\tilde{\lambda}$ has a Reeb vector field admitting a global surface of section.
\end{theorem}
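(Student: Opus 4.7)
My approach is to deduce Theorem \ref{Giroux-2} directly from Giroux's Theorem \ref{giroux} by checking that the pages of a supporting open book are global surfaces of section for the Reeb flow of the Giroux contact form, using the local normal form near the binding (Proposition \ref{localnormalform-2}). First I would invoke Theorem \ref{giroux} to obtain an open book decomposition $\tau:M\setminus K\to S^1$ supporting the contact structure $\xi=\ker\tilde\lambda$. By definition of a supporting open book, there is a Giroux contact form $\lambda'$ with $\ker\lambda'=\ker\tilde\lambda$, which forces $\lambda'=f\tilde\lambda$ for a positive function $f\in C^\infty(M)$. I set $\lambda:=\lambda'$ and take as candidate surface of section the closure $\Theta:=\bar F_{\vartheta_0}$ of any page. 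By the definition of open book, $\Theta$ is an embedded compact surface with $\partial\Theta\subset K$ a disjoint union of periodic Reeb orbits, verifying the first requirement of a local surface of section.

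Next I verify transversality of $\dot\Theta=F_{\vartheta_0}$ to $X_\lambda$. Since $d\lambda$ restricts to an area form on each page and the Reeb vector field satisfies $i_{X_\lambda}d\lambda=0$, if $X_\lambda(p)$ were tangent to a page at some $p$, then $d\lambda(X_\lambda,v)=0$ for every $v\in T_pF_{\vartheta_0}$ would contradict the nondegeneracy of $d\lambda|_{F_{\vartheta_0}}$. This simultaneously gives the stronger statement that $d\tau(X_\lambda)$ has constant sign on $M\setminus K$; after possibly flipping orientation on $S^1$ we may assume $d\tau(X_\lambda)>0$ everywhere on $M\setminus K$.

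It remains to show that every non-binding orbit returns to $\dot\Theta$ in forward and backward time with a uniformly bounded return time. On any compact subset $A\subset M\setminus K$, the strict positivity of $d\tau(X_\lambda)$ gives a uniform lower bound $d\tau(X_\lambda)\ge c_A>0$, and hence a uniform upper bound on the time needed for $\tau$ to increase by $2\pi$ along an orbit in $A$. The subtle point, and the main obstacle, is the behavior near the binding: a priori an orbit could linger near $K$ for arbitrarily long, or asymptotically approach a binding orbit without ever crossing $\Theta$ again. This is precisely what the local normal form from Proposition \ref{localnormalform-2} rules out: in tubular coordinates $(\vartheta,r,\phi)$ around a component of $K$, the Giroux form and its Reeb vector field can be written in a standard form in which $d\phi(X_\lambda)$ extends continuously up to $r=0$ and remains bounded below by a positive constant on a neighborhood of $K$. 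Combined with $\tau=\phi$ in these coordinates, this yields a global lower bound $d\tau(X_\lambda)\ge c>0$ on all of $M\setminus K$, which gives both the required returns and the bound $c^{-1}\cdot 2\pi$ on the return time. This establishes $\Theta$ as a global surface of section and proves the theorem.
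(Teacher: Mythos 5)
Your argument is correct and follows exactly the paper's intended route (the paper itself merely cites Proposition \ref{localnormalform-2} and leaves the verification to the reader), but one point in the middle is stated too loosely and is worth tightening. You write ``I set $\lambda:=\lambda'$'' where $\lambda'$ is an arbitrary Giroux form coming from the definition of a supporting open book, and later assert that ``the Giroux form and its Reeb vector field can be written in a standard form'' near $K$. This is not automatic: the definition of a supporting open book only requires $d\lambda'$ to be an area form on the \emph{open} pages, and a Giroux form can perfectly well degenerate as $r\to 0$ in such a way that $d\phi(X_{\lambda'})\to 0$. For instance, $\gamma_1(r)=1-r^4$, $\gamma_2(r)=r^2$ gives a contact form near $K$ with $K$ a Reeb orbit, transversality on the open pages, but $d\phi(X_{\lambda'})=O(r^2)$, so the return time blows up; the culprit is that here $\kappa=\gamma_1''(0)/\gamma_2''(0)=0$, violating condition (5) of Definition \ref{local-model}. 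It is precisely conditions (3)--(5) of the local model that force $\beta(r)=-\gamma_1'(r)/\mu(r)\to -\kappa/\gamma_1(0)>0$ as $r\to 0$.

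So rather than taking an arbitrary Giroux form, you should take $\lambda:=\varphi_\delta^*\lambda_\delta$, where $\lambda_\delta$ is the contact form with the local model near $K$ supplied by Proposition \ref{localnormalform} (supported by the same open book) and $\varphi_\delta$ is the diffeomorphism from Proposition \ref{localnormalform-2} with $\ker\lambda_\delta=(\varphi_\delta)_*\xi$. Then $\ker\lambda=\xi=\ker\tilde\lambda$, so $\lambda=f\tilde\lambda$ with $f>0$, the Reeb flow of $\lambda$ is conjugate by $\varphi_\delta$ to that of $\lambda_\delta$, and the pulled-back page $\varphi_\delta^{-1}(\bar F_{\vartheta_0})$ is a global surface of section because $\lambda_\delta$ does satisfy the normal form. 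With this correction, the rest of your argument (transversality from nondegeneracy of $d\lambda|_{F_\vartheta}$, uniform positivity of $d\tau(X_\lambda)$ on a compact complement, and $\beta(r)$ bounded below near $K$) goes through verbatim.
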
\qed

Existence results for finite energy foliations with a given contact form $\lambda$ are hard to come by since they usually have striking consequences. In the article \cite{HWZ-sc} for example, H. Hofer, K. Wysocki and E. Zehnder show that every compact strictly convex energy hypersurface $S$ in ${\mathbb R}^4$ carries either two or infinitely many closed characteristics. The proof relies on constructing a special finite energy foliation. In special cases they were established by H. Hofer, K. Wysocki and E. Zehnder \cite{HWZ-fef} and by C. Wendl \cite{Wendl-2}, \cite{Wendl-3}. Proofs usually require a 'starting point', i.e. a finite energy foliation for a slightly different situation as the given one. Then some kind of continuation argument is employed where all kinds of things can and do happen to the original foliation. In \cite{HWZ-fef} the authors start with an explicit finite energy foliation for the round three dimensional sphere $S^3\subset {\mathbb R}^4$ which is then deformed. Chris Wendl's papers also use a rather special manifold as a starting point. The main result of this paper, theorem \ref{main-thm}, provides a 'starting finite energy foliation' for any closed three dimensional contact manifold $(M,\ker\lambda)$ since it is obtained from deforming the leaves of Giroux's open book decomposition. The pages are usually not punctured spheres, and generically there are no pseudoholomorphic curves on punctured surfaces with genus which are transverse to the Reeb vector field. This makes the introduction of the harmonic form in (\ref{eq1}) a necessity. The price to be paid is that compactness issues are more complicated.\\
Chris Wendl \cite{Wendl} published a proof of theorem \ref{main-thm} for the special case where $\ker\lambda$ is a planar contact structure, i.e. the surfaces $\dot{S}$ are punctured spheres. This result was outlined in the article \cite{ACH}. Regardless of whether the contact structure is planar or not, there are two main steps in the proof: Existence of a solution and Compactness of a family of solutions. While the author established the compactness part for theorem \ref{main-thm} long before the article \cite{ACH} appeared we will use the same argument as in Wendl's article \cite{Wendl} for the existence part since it simplifies the proof considerably.\\
The main theorem of this article was the first step in the proof of the Weinstein conjecture for the planar case in \cite{ACH}. Recall that the Weinstein conjecture states the following:\\

\noindent {\bf Conjecture (A. Weinstein, 1978):}\\
{\it Every Reeb vector field $X$ on a
closed contact manifold $M$ admits a periodic orbit.}\\

In fact, Weinstein added the additional hypothesis that the first
cohomology group $H^1(M,{\mathbb R})$ with real coefficients vanishes, but
there seems to be no indication that this additional hypothesis is
needed.\\

Moreover, theorem \ref{main-thm} is also the starting point for the construction of global surfaces of section in the forthcoming paper \cite{AHL-2}. Another application will be an alternative proof of the Weinstein conjecture in dimension three \cite{AHL-2} as outlined in the paper \cite{ACH}. This complements Clifford Taubes' recent proof of the Weinstein conjecture in dimension three using a perturbed version of the Seiberg-Witten equations \cite{Taubes-1}, \cite{Taubes-2}. The main issue with the homological perturbed holomorphic curve equation (\ref{eq1}) is that there is no natural compactification of the space of solutions unless the harmonic forms are uniformly bounded. In the forthcoming papers \cite{AHL-1}, \cite{AHL-2} the lack of compactness is investigated, and bounds for the harmonic forms are derived in particular cases.\\

I am very grateful to Richard Siefring and Chris Wendl for explaining some of their work to me. Their results are indispensable for the arguments in this article. I would also like to thank Samuel Lisi for having numerous discussions with me about the subject of this article.

\section{Existence and local foliations}\label{existence}
\subsection{Local model near the binding orbits}

We will use the same approach as in \cite{Wendl} and \cite{Wendl-2} to prove existence of a solution to (\ref{eq1}). Given a closed contact three manifold $(M,\xi)$, Giroux's theorem implies that there is an open book decomposition as in theorem \ref{Alexanderstheorem} supporting $\xi$. On the other hand, any other contact structure $\xi'$ supported by the same open book is diffeomorphic to $\xi$. Starting with an open book decomposition for $M$, we construct a contact structure supported by it with Giroux contact form $\lambda$ which has a certain normal form near the binding.
\begin{definition}\label{local-model}
 Let $\theta\in S^1={\mathbb R}/2\pi{\mathbb Z}$, denote polar coordinates on the unit disk $D\subset {\mathbb R}^2$ by $(r,\phi)$, and let $\gamma_1,\gamma_2:[0,+\infty)\rightarrow{\mathbb R}$ be smooth functions. A 1-form 
\[
 \lambda\,=\,\gamma_1(r)\,d\theta\,+\,\gamma_2(r)\,d\phi
\]
is called a {\sl local model near the binding} if the following conditions are satisfied:
\begin{enumerate}
 \item The functions $\gamma_1,\gamma_2$ and $\gamma_2(r)/r^2$ are smooth if considered as functions on the disk $D$. In particular, $\gamma_1'(0)=\gamma'_2(0)=\gamma_2(0)=0$.
\item $$\mu(r):=\gamma_1(r)\gamma'_2(r)-\gamma'_1(r)\gamma_2(r)>0\ \mbox{if}\ r>0$$
\item $$\gamma_1(0)>0\ \mbox{and}\ \gamma'_1(r)<0\  \mbox{if}\ r>0$$
\item $$\lim_{r\rightarrow 0}\frac{\mu(r)}{r}=\gamma_1(0)\gamma_2''(0)>0$$
\item $$\kappa:=\frac{\gamma_1''(0)}{\gamma_2''(0)}\notin{\mathbb Z}\ \mbox{and}\ \kappa\le-\frac{1}{2}$$ 
\item $$A(r)=\frac{1}{\mu^2(r)}(\gamma''_2(r)\gamma_1'(r)-\gamma_1''(r)\gamma_2'(r))$$ is of order $r$ for small $r>0$.
\end{enumerate}
\end{definition}

We explain some of the conditions above. First, since
\[
 \lambda\wedge d\lambda=\mu(r)d\theta\wedge dr\wedge d\phi=\frac{\mu(r)}{r}d\theta\wedge dx\wedge dy
\]
the form $\lambda$ is a contact form on $S^1\times D$. The Reeb vector field is given by
\[
 X(\theta,r,\phi)=\frac{\gamma'_2(r)}{\mu(r)}\frac{\partial}{\partial\theta}-\frac{\gamma'_1(r)}{\mu(r)}\frac{\partial}{\partial\phi}=:\alpha(r)\frac{\partial}{\partial\theta}+\beta(r)\frac{\partial}{\partial\phi}.
\]
The trajectories of $X$ all lie on tori $T_r=S^1\times \partial D_r$:
\begin{equation}\label{reebflow}
 \theta(t)=\theta_0+\alpha(r)\,t\ ,\ \phi(t)=\phi_0+\beta(r)\,t.
\end{equation}
We compute
\[
 \lim_{r\rightarrow 0}\alpha(r)=\lim_{r\rightarrow 0}\frac{\gamma_2''(r)}{\mu'(r)}=\frac{\gamma''_2(0)}{\gamma_1(0)\gamma''_2(0)}=\frac{1}{\gamma_1(0)}
\]
and
\[
 \lim_{r\rightarrow 0}\beta(r)=-\lim_{r\rightarrow 0}\frac{\gamma''_1(r)}{\mu'(r)}=-\frac{\gamma''_1(0)}{\gamma_1(0)\gamma''_2(0)}
\]
Recalling that $\frac{\partial}{\partial\phi}=x\frac{\partial}{\partial y}-y\frac{\partial}{\partial x}$ we obtain for $r=0$
\[
 X=\frac{1}{\gamma_1(0)}\frac{\partial}{\partial\theta}
\]
i.e. the central orbit has minimal period $2\pi\gamma_1(0)$. If the ratio $\alpha(r)/\beta(r)$ is irrational then the torus $T_r$ carries no periodic trajectories. Otherwise, $T_r$ is foliated with periodic trajectories of minimal period
\[
 \tau=\frac{2\pi m}{\alpha}=\frac{2\pi n}{\beta}
\]
where $\alpha/\beta=m/n$ or $\beta/\alpha=n/m$ for suitable integers $m,n$ (choose whatever makes sense if either $\alpha$ or $\beta$ is zero). We calculate
\begin{eqnarray*}
 \lim_{r\rightarrow 0}\frac{d\alpha}{dr} & = & \lim_{r\rightarrow 0}\frac{\gamma''_2(r)\mu(r)-\gamma'_2(r)\mu'(r)}{\mu^2(r)}\\
& = & \lim_{r\rightarrow 0}\frac{\gamma_2'''(r)}{2\mu'(r)}-\lim_{r\rightarrow 0}\frac{\mu''(r)}{2\mu'(r)}\,\frac{\gamma'_2(r)}{r}\,\frac{r}{\mu(r)}\\
& = & \frac{\gamma_2'''(0)}{2\mu'(0)}-\frac{\gamma_1(0)\gamma_2'''(0)\gamma_2''(0)}{2(\mu'(0))^2}\\
& = & 0
\end{eqnarray*}
since $\mu''(0)=\gamma_1(0)\gamma_2'''(0)$ and $\mu'(0)=\gamma_1(0)\gamma_2''(0)>0$. Converting to Cartesian coordinates on the disk we get
\[
 X(\theta,x,y)=\alpha(x,y)\frac{\partial}{\partial\theta}-\beta(x,y)\,y\,\frac{\partial}{\partial x}\,+\,\beta(x,y)\,x\,\frac{\partial}{\partial y},
\]
and linearizing the Reeb vector field along the center orbit yields
\[
 DX(\theta,0,0)=\left(\begin{array}{ccc} 0 & 0 & 0 \\ 0 & 0 & -\beta(0)\\ 0 & \beta(0) & 0\end{array}\right)
\]
The linearization of the Reeb flow is given by
\begin{equation}\label{lin-flow}
 D\phi_t(\theta,0,0)=\left(\begin{array}{ccc} 1 & 0 & 0 \\ 0 & \cos\beta(0)t & -\sin\beta(0)t\\ 0 & \sin\beta(0)t & \cos\beta(0)t \end{array}\right)
\end{equation}
with
\[
\Phi(t)=e^{\beta(0)tJ}\ ,\ J=\left(\begin{array}{cc} 0 & -1 \\ 1 & 0 \end{array}\right).
\]
the spectrum of $\Phi(t)$ is giving by
\[
 \sigma(\Phi(t))=\{e^{\pm i\beta(0)t}\}.
\]
The binding orbit has period $2\pi \gamma_1(0)$. Since
\[
\gamma_1(0)\beta(0)\,=\,-\frac{\gamma''_1(0)}{\gamma''_2(0)}\notin {\mathbb Z}
\]
it is nondegenerate and elliptic. 

\begin{example}\label{local-example-1}
 For the contact form $T\,d\theta+\frac{1}{k}(x\,dy-y\,dx)=T\,d\theta+\frac{r^2}{k}d\phi$ the central orbit $S^1\times \{0\}$ is degenerate, but 
\[
 \lambda=(1-r^2)(T\,d\theta+\frac{r^2}{k}d\phi)
\]
is a local model near the binding if
\[
 k,T>0\ ,\ kT\notin{\mathbb Z}\ \mbox{and}\ kT\ge\frac{1}{2}
\]
In this case
\[
 \mu(r)=\frac{2rT}{k}(1-r^2)^2>0\ \mbox{and}\ \frac{\gamma''_1(0)}{\gamma''_2(0)}=-kT,
\]
and we note that
\[
 A(r)=\frac{1}{\mu^2(r)}(\gamma''_2(r)\gamma_1'(r)-\gamma_1''(r)\gamma_2'(r))=\frac{4kr}{T(1-r^2)^4}.
\]
If
\[
 \frac{\alpha(r)}{\beta(r)}=-\frac{\gamma'_2(r)}{\gamma'_1(r)}=\frac{1-2r^2}{kT}=\frac{m}{n}
\]
for integers $n,m$ then the invariant torus $T_r$ is foliated with periodic orbits. The case $m=0$ is only possible if $r=\frac{1}{\sqrt{2}}$. If $r$ is sufficiently small then $|m|\ge 2$. Indeed, we would otherwise be able to find sequences $r_l\searrow 0$ and $\{n_l\}\subset{\mathbb Z}$ such that $kT/(1-2r_l^2)\,=\,n_l$ which is impossible. The binding orbit has period $2\pi T$ while the periodic orbits close to the binding orbit have much larger periods equal to
\[
 \tau\,=\,2\pi Tm\frac{(1-r^2)^2}{1-2r^2}.
\]

\end{example}
\begin{example}\label{local-example-2}
Consider the contact form $\lambda=T(1-r^2)d\theta+\frac{r^2}{k}d\phi$ on $S^1\times D$. It is also a local model near the binding if $k,T>0$, $kT\ge\frac{1}{2}$ and $kT$ is not an integer. We even have $A(r)\equiv 0$. In contrast to the previous example, if $kT\notin{\mathbb Q}$ the invariant tori $T_r$ carry no periodic orbits. If $kT=\frac{n}{m}\in{\mathbb Q}$, but not in ${\mathbb Z}$, all invariant tori are foliated with periodic orbits of period $2\pi mT$ with $|m|\ge 2$ while the binding orbit has period $2\pi T$. The function $A(r)$ is identically zero. This is the contact form on the 'irrational ellipsoid' in ${\mathbb R}^4$. 
\end{example}

The following proposition is essentially proposition 1 from \cite{Wendl}. The construction in the proof was used by Thurston and Winkelnkemper \cite{TW} to show existence of contact forms on closed three manifolds.
\begin{proposition}\label{localnormalform}
Let $M$ be a three dimensional manifold given by an open book decomposition 
\[
 M=W(h)\,{\bigcup}_{\mbox{Id}}\,(\partial W\times
D^2)
\]
as described in theorem \ref{Alexanderstheorem}. We denote the pages by
\[
 F_{\alpha}:=(W\times\{\alpha\})\,{\bigcup}_{\mbox{Id}}\,(\partial W\times I_{\alpha})\ ,\ 0\le\alpha<2\pi,
\]
where $I_{\alpha}:=\{re^{i\alpha}\in D\,|\,0<r< 1\}$, and the binding $\partial W\times\{0\}$ by $K$. Moreover, let $\lambda_2$ be a contact form on $\partial W\times D$ which is a local model near the binding on each connected component of $\partial W\times D$.\\
Then there is a smooth family of 1-forms $(\lambda_{\delta})_{0\le\delta<1}$ on $M$ such that 
\begin{itemize}
\item The form $\lambda_0$ is a confoliation 1-form, i.e. $\lambda_0\wedge d\lambda_0\ge 0$, and $\ker\lambda_0$ agrees with the tangent spaces to the pages $F_{\vartheta}$ away from the binding
\item For $\delta>0$ the forms $\lambda_{\delta}$ are contact forms such that $\ker\lambda_{\delta}$ is supported by the above open book. In particular, the Reeb vector fields $X_{\lambda_{\delta}}$ are transverse to the pages $F_{\alpha}$, and the binding $K$ consists of periodic orbits of $X_{\lambda_{\delta}}$. 
\item The forms $\lambda_{\delta}$ agree with the local model $\lambda_2$ near the binding. In particular, the binding orbits are nondegenerate and elliptic.
\end{itemize}
\end{proposition}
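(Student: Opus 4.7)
The plan is to build $\lambda_\delta$ by combining three pieces — a Thurston-Winkelnkemper contact form on the bulk of the mapping torus $W(h)$, the prescribed local model $\lambda_2$ near the binding, and a one-parameter family of rotational interpolations on the annular region in between — each depending smoothly on $\delta$.

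On the page $W$, pick a primitive $\mu$ of a positive area form with $\mu = (1+t)\,d\theta$ in a collar $[-\epsilon,0]_t \times \partial W$ of the boundary. Since $h = \mathrm{id}$ near $\partial W$, $h^*\mu = \mu$ in the collar, so the convex combination $\mu_\alpha = (1 - \alpha/(2\pi))\mu + (\alpha/(2\pi))h^*\mu$ descends from $W\times[0,2\pi]$ to $W(h)$. On $W(h)$ set
\[
 \lambda^{\mathrm{bulk}}_\delta \,=\, c(\delta)\,d\alpha + \delta\,\mu_\alpha, \qquad c(\delta) = 1 + \delta(K-1),
\]
for a large constant $K>0$. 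A direct computation gives
\[
 \lambda^{\mathrm{bulk}}_\delta \wedge d\lambda^{\mathrm{bulk}}_\delta \,=\, \delta\bigl(c(\delta)\,d_W\mu_\alpha - \delta\,\mu_\alpha\wedge\partial_\alpha\mu_\alpha\bigr)\wedge d\alpha,
\]
which is a positive volume form for every $\delta>0$ once $K$ is large, while $\lambda^{\mathrm{bulk}}_0 = d\alpha$ is a foliation form whose kernel is precisely the tangent space to the pages. In the collar the form reads $c(\delta)\,d\alpha + \delta(1+t)\,d\theta$, which under the identification $r = 1+t$, $\phi = \alpha$ (orientations of the $\alpha$- and $\phi$-coordinates chosen so that the contact orientations of the bulk form and of $\lambda_2$ agree) becomes a 1-form of the type $g_1(r,\delta)\,d\theta + g_2(r,\delta)\,d\phi$ on an outer annulus of $\partial W \times D$.

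On the tubular neighborhood $\partial W\times D$ I prescribe
\[
 \lambda_\delta \,=\, g_1(r,\delta)\,d\theta + g_2(r,\delta)\,d\phi
\]
with $(g_1,g_2) = (\gamma_1,\gamma_2)$ for $r\le r_0$ (agreement with $\lambda_2$, forced for all $\delta$) and $(g_1,g_2)$ equal to the bulk values for $r \ge r_1$, where $0 < r_0 < r_1 < 1$. Since
\[
 \lambda_\delta \wedge d\lambda_\delta \,=\, (g_1\,\partial_r g_2 - g_2\,\partial_r g_1)\,d\theta\wedge dr\wedge d\phi,
\]
writing $g_1 = \rho\sin\Theta$, $g_2 = \rho\cos\Theta$ the contact condition becomes strict monotonicity of $\Theta$ in $r$ and the confoliation condition allows $\partial_r\Theta = 0$ on subintervals. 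The angle $\Theta(r_0,\delta)$ is determined by $\lambda_2$ and independent of $\delta$; the angle $\Theta(r_1,\delta)$ depends smoothly on $\delta$, lies strictly on the required side of $\Theta(r_0,\delta)$ for every $\delta>0$ once $K$ is large, and tends to the page-tangent angle as $\delta\to 0$. One can thus write down explicitly a smooth family $(\rho,\Theta)(r,\delta)$ on $[r_0,r_1]\times[0,1)$ matching these endpoint data, with $\Theta$ strictly monotone in $r$ for $\delta>0$ and weakly monotone for $\delta = 0$ — for instance $\Theta$ affine in $r$ with a $\delta$-dependent slope, together with $\rho$ a convex combination of the endpoint radii. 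Arranging additionally $\partial_r g_1\neq 0$ on the interior for $\delta>0$ ensures that the resulting Reeb vector field has a nonzero $\partial_\phi$-component.

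The bulk and tubular pieces glue to a smooth family $\lambda_\delta$ on $M$ because they agree on the overlap $\{1-\epsilon \le r \le 1\}$ by construction. For $\delta>0$ the contact condition is verified pointwise in every region; the Reeb field is transverse to the pages because in the bulk $d\alpha(X_{\lambda_\delta})>0$ and on the tubular neighborhood the explicit formula $X = \alpha(r)\partial_\theta + \beta(r)\partial_\phi$ gives a nonzero $\partial_\phi$-component for $r > 0$. At $\delta = 0$ we have $\lambda_0\wedge d\lambda_0 \ge 0$ everywhere, $\ker\lambda_0$ equals the page tangent space on the bulk and on the region $r \ge r_1$ of the tubular neighborhood (hence away from a neighborhood of $K$), and $\lambda_0 = \lambda_2$ on $\{r \le r_0\}$. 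The binding orbits are nondegenerate elliptic by conditions (4)-(5) of Definition~\ref{local-model}. The main obstacle is the rotational interpolation, which must be strictly monotone in $r$ for $\delta>0$, merely weakly monotone at $\delta=0$, and jointly smooth in $(r,\delta)$ even though the outer endpoint changes qualitatively as $\delta\to 0$; the explicit affine-in-$r$ choice for $\Theta$ indicated above handles this.
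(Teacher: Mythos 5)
Your construction is the same Thurston--Winkelnkemper gluing the paper uses: build a family of 1-forms on the mapping torus $W(h)$ from a primitive of an area form on the page, identify a collar of $\partial W(h)$ with an outer annulus of $\partial W\times D$, and then interpolate the pair $(g_1,g_2)$ radially on $\partial W\times D$ between the prescribed local model and the bulk values, tracking the contact condition as monotonicity of the polar angle. Your modification $c(\delta)\,d\alpha+\delta\mu_\alpha$ with $c(\delta)=1+\delta(K-1)$ is a small but genuine improvement over the paper's $\pi^*d\tau-\delta\tilde\alpha$: it makes the bulk form a contact form for all $\delta\in(0,1)$ at once rather than only for small $\delta$, so the family is honestly parametrized by $[0,1)$ without reparametrizing.

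There is, however, one actual error. Your interpolated primitive $\mu_\alpha=(1-\alpha/2\pi)\mu+(\alpha/2\pi)h^*\mu$ does \emph{not} descend to $W(h)=W\times[0,2\pi]/(x,0)\sim(h(x),2\pi)$. Descent requires $\mu_0=h^*\mu_{2\pi}$, but your choice gives $\mu_0=\mu$ and $h^*\mu_{2\pi}=h^*h^*\mu=(h^2)^*\mu$, which agree only when $h^2=\mathrm{id}$. The reasoning you offer --- that $h^*\mu=\mu$ in the collar --- only ensures the forms match near $\partial W$, not over the interior of the page. The fix is simply to swap the weights: $\mu_\alpha=(\alpha/2\pi)\mu+(1-\alpha/2\pi)h^*\mu$ (equivalently the paper's $\tilde\alpha(x,\tau)=\tau\alpha(x)+(2\pi-\tau)(h^*\alpha)(x)$), which satisfies $\mu_0=h^*\mu=h^*\mu_{2\pi}$ since $\mu_{2\pi}=\mu$. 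Two smaller remarks: (i) the orientation reconciliation you wave at is real work --- the local model demands $\gamma_1(0)>0$ and $\gamma_1'<0$, while the bulk contribution $\delta(1+t)\,d\theta$ has the opposite sign of the radial derivative, so one must flip a coordinate as the paper's $-\delta\tilde\alpha$ does, and then check the interpolating angle actually lies between the two endpoint angles; (ii) an affine-in-$r$ choice of $\Theta$ on $[r_0,r_1]$ glued to the endpoint data is only $C^0$, so the ``explicit'' interpolation has to be done with a smooth cutoff to match derivatives at $r_0$ and $r_1$. Also, the phrase ``weakly monotone for $\delta=0$'' applies to the larger interval $[r_0,1]$ (where $\Theta$ is constant on $[r_1,1]$), not to the interpolation interval $[r_0,r_1]$ itself, where even at $\delta=0$ the angle still moves strictly from $\Theta(r_0)$ to $\Theta(r_1,0)$; this matches the paper's observation that $\lambda_0$ is strictly contact on $\partial W\times D_{1-\varepsilon_0}$ and equals $d\phi$ only outside.
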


\begin{proof}
We will first construct contact/confoliation forms $\lambda_1$ on $W(h)$, depending smoothly on a parameter $\delta\ge 0$, that we control well near the boundary $\partial W(h)\approx\partial W\times S^1$. Then we will glue these forms together with $\lambda_2$ in a smooth way to obtain a contact form on $W(h){\bigcup}_{\mbox{Id}}(\partial W\times D^2)$ for $\delta>0$ or a confoliation form for $\delta=0$. This procedure was used by Thurston and Winkelnkemper \cite {TW} where they showed that every open book is supported by some contact structure.\\

Starting with an open book as above, we can find a collar neighborhood $C$ of $\partial W$ so that
$h(t,\theta)=(t,\theta)$ for all $(t,\theta)\in C$. Here we
identify $(C,\partial W)$ with $([0,\varepsilon]\times
(\dot{\bigcup}_nS^1),\{0\}\times(\dot{\bigcup}_nS^1))$
 where we take an n--fold disjoint union of circles $S^1\approx{\mathbb R}/2\pi{\mathbb Z}$
  according to the number $n$ of components of
$\partial W$.

We claim that there is an area form $\Omega$ on $W$ that
satisfies
\begin{itemize}
\item $\int_W\Omega=2\pi n$,
\item $\Omega|_C=\,dt\wedge d\theta.$
\end{itemize}
Indeed, start with any area form $\Omega'$ so that
$\int_W\Omega'=2\pi n$. Then we have $\Omega'|_C=f'(t,\theta)dt\wedge d\theta$ with a positive smooth function $f'$ (after switching signs if necessary). Pick now a new
smooth positive function $f$ which is equal to some constant $c$ if
$t\le\frac{1}{3}\varepsilon$ and agrees with $f'$ if
$t\ge\frac{2}{3}\varepsilon$ so that the resulting area form
$\Omega$ still satisfies $\int_W\Omega=2\pi n$. Do one component of $\partial
W$ at a time. Rescaling the t--coordinate we may assume that $c=1$.\\

Let $\alpha_1$ be any 1-form on $W$ which equals $(1+t)\,d\theta$
near $\partial W$. Then we obtain by Stokes' theorem:
$$\int_W(\Omega-d\alpha_1)=2\pi n-\int_{\partial
W}\alpha_1=2\pi n+\int_{\partial W} d\theta =0.$$ The 2--form
$\Omega-d\alpha_1$ on $W$ is closed and vanishes near $\partial
W$. Then there exists a
1-form $\beta$ on $W$ with $$d\beta=\Omega-d\alpha_1$$ and
$\beta\equiv 0$ near $\partial W$. Define now
$\alpha_2:=\alpha_1+\beta$. Then $\alpha_2$ satisfies:
\begin{equation}\label{2eq3}
    d\alpha_2\ \ \mbox{is an area form on $W$ inducing the same orientation as
    $\Omega$},
\end{equation}
\begin{equation}\label{2eq4}
    \alpha_2  =  (1+t)\,d\theta\ \ \mbox{near $\partial W$}.
\end{equation}
The set of 1-forms on $W$ satisfying $(\ref{2eq3})$ and
$(\ref{2eq4})$ is therefore nonempty and also convex. We define
the following 1-form on $W\times[0,2\pi]$, where $\alpha$ is any
1--form on $W$ satisfying $(\ref{2eq3})$ and $(\ref{2eq4})$.
$$\tilde{\alpha}(x,\tau):=\tau\alpha(x)+(2\pi-\tau)(h^{\ast}\alpha)(x).$$
This 1-form descends to the quotient $W(h)$ and the restriction to
each fiber of the fiber bundle
$W(h)\stackrel{\pi}{\rightarrow}S^1$ satisfies condition
$(\ref{2eq3})$. Moreover, since $h\equiv$ Id near $\partial W$ we
have $\tilde{\alpha}(x,\tau)=2\pi(1+t)\,d\theta$ for all
$(x,\tau)=((t,\theta),\tau)$ near $\partial W(h)=\partial W\times
S^1$.\\ Let $d\tau$ be a volume form on $S^1$. We claim that
$$\lambda_1:=-\delta\tilde{\alpha}+\pi^{\ast}d\tau$$ are contact forms
on $W(h)$ whenever $\delta> 0$ is sufficiently small. Pick $(x,\tau)\in W(h)$ and let $\{u,v,w\}$ be a
basis of $T_{(x,\tau)}W(h)$ with $\pi_{\ast}u=\pi_{\ast}v=0$. Then
\begin{eqnarray*}
    & & (\lambda_1\wedge d\lambda_1)(x,\tau)(u,v,w)\\
    & = & \delta^2(\tilde{\alpha}\wedge
    d\tilde{\alpha})(x,\tau)(u,v,w)\,-\,\delta\,[d\tau(\pi_{\ast}w)\,d\tilde{\alpha}(x,\tau)(u,v)]\\
     & \neq & 0
\end{eqnarray*}
for sufficiently small $\delta> 0$, and $d\lambda_1$ is a volume form on $W$. Now we have to continue the contact
forms $\lambda_1$ beyond $\partial W(h)\approx \partial W\times
S^1$ onto $\partial W\times D^2$. At this point it is convenient
to change coordinates. We identify $C\times S^1$ with $\partial
W\times (D^2_{1+\varepsilon}\backslash D^2_{1})$, where $D^2_\rho$
is the 2-disk of radius $\rho$. Using polar coordinates $(r,\phi)$
on $D^2_{1+\varepsilon}$ with $0\le\phi\le 2\pi$ and
$0<r\le1+\varepsilon$, our old coordinates are related to the new
ones by $$\partial W\times (D^2_{1+\varepsilon}\backslash
D^2_1)\ni (\theta,r,\phi)\approx(\theta,1+t,\tau)\in C\times S^1$$
and $\lambda_1$ is given by $$\lambda_1=-\frac{\delta}{2\pi}\,r\,d\theta +\,d\phi$$ on
$\partial W\times (D^2_{1+\varepsilon}\backslash D^2_{1})$, with
$\varepsilon$ sufficiently small so that (\ref{2eq4}) holds. We will from now on drop the factor $1/2\pi$, absorbing it into the constant $\delta$. We have to extend this now smoothly to a contact form on $\partial
W\times D^2_{1+\varepsilon}$ which agrees with $\lambda_2$ near $\{r=0\}$.
We set 
\[
 \lambda\,=\,\gamma_1(r)\,d\theta\,+\,\gamma_2(r)\,d\phi
\]
where $\gamma_1,\gamma_2$ satisfy the conditions in definition \ref{local-model} for small $r$, say $r\le \varepsilon_0$, and
\[
 \gamma_1(r)=-\delta r\ \ ,\ \ \gamma_2(r)= 1 \ \  \mbox{for $r\ge 1-\varepsilon_0$}
\]
If we write $\gamma(r)=\gamma_1(r)+i\gamma_2(r)=\rho(r)\,e^{i\alpha(r)}$ then $$\mu(r):=\gamma_1(r)\gamma'_2(r)-\gamma'_1(r)\gamma_2(r)=\Re(i\gamma(r)\,\overline{\gamma'(r)})=\rho^2(r)\alpha'(r)$$ 
which has to be positive. Also recall from definition \ref{local-model} that
$$\gamma_1(0)>0\ \mbox{and}\ \gamma'_1(r)<0\  \mbox{if}\ r>0$$
hence the curves $\gamma=\gamma_{\delta}$ have to turn counterclockwise in the first quadrant starting at the point $(\gamma_1(0),0)$ and later connecting with $(-\delta(1-\varepsilon_0),1)$. In the case where $\delta> 0$ the Reeb vector fields are given by
\[
 X_{\delta}(\theta,r,\phi)=\frac{\gamma_2'(r)}{\mu(r)}\frac{\partial}{\partial\theta}-\frac{\gamma'_1(r)}{\mu(r)}\frac{\partial}{\partial\phi},
\]
and in particular
\begin{equation}\label{Reeb-vfs}
 X_{\delta}(\theta,r,\phi)=  \frac{\partial}{\partial\phi} \  \mbox{for}\ r\ge 1-\varepsilon_0
\end{equation}
which implies that the Reeb vector fields $X_{\delta}$ converge as $\delta\searrow 0$. In addition to $\lambda_{\delta}$ being contact forms for $\delta>0$  we also want the given open book decomposition to support $\ker\lambda_{\delta}$, hence $X_{\delta}$ needs to be transverse to the pages of the open book decomposition which is equivalent to $\gamma'_1(r)\neq 0$. A curve $\gamma(r)$ fulfilling these conditions can clearly be constructed. 
\end{proof}

The following result shows that we can always assume that a Giroux contact form is equal to any of the forms provided by \ref{localnormalform}.
\begin{proposition}\label{localnormalform-2}
Let $M$ be a closed three dimensional manifold with contact structure $\xi$. Then for every $\delta> 0$ there is a diffeomorphism $\varphi_{\delta}:M\rightarrow M$ such that $\ker\lambda_{\delta}=\varphi_{\ast}\xi$ where $\lambda_{\delta}$ is given by proposition \ref{localnormalform}. 
\end{proposition}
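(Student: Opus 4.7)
The proof is essentially a direct combination of Giroux's existence/uniqueness result (Theorem~\ref{giroux}) with the explicit construction of Proposition~\ref{localnormalform}. I plan to proceed as follows.

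First, apply the existence half of Giroux's theorem to $(M,\xi)$ to obtain an open book decomposition $\tau: M\setminus K\to S^1$ with binding $K$ supporting $\xi$. By Alexander's theorem we may present $M$ as $W(h)\cup_{\mathrm{Id}}(\partial W\times D^2)$ carrying this open book, as in Theorem~\ref{Alexanderstheorem}. This puts us in the exact setting where Proposition~\ref{localnormalform} applies.

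Next, fix any local model $\lambda_2$ near the binding satisfying Definition~\ref{local-model}, for instance the one from Example~\ref{local-example-1} or \ref{local-example-2}. Feeding $\lambda_2$ and the open book into Proposition~\ref{localnormalform} produces, for every $\delta>0$, a genuine contact form $\lambda_\delta$ on $M$ whose kernel is supported by the same open book $(K,\tau)$ and which equals $\lambda_2$ near $K$.

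Now invoke the uniqueness half of Theorem~\ref{giroux}: since both $\xi$ and $\ker\lambda_\delta$ are contact structures on $M$ supported by one and the same open book decomposition, they are diffeomorphic. Concretely, there exists a diffeomorphism $\varphi_\delta:M\to M$ with $(\varphi_\delta)_\ast\xi=\ker\lambda_\delta$, which is precisely the conclusion we want.

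The only place where one needs to be a little careful is bookkeeping: one must verify that the open book produced by Giroux's theorem is the open book that is fed into Proposition~\ref{localnormalform}, so that the two contact structures $\xi$ and $\ker\lambda_\delta$ really are supported by the identical open book (not just by abstractly isomorphic ones). This is immediate from our setup, since we used Giroux's open book as the input for Proposition~\ref{localnormalform}. There is no genuine analytic obstacle here; the content of the proposition is that the normal-form freedom near the binding costs us only a diffeomorphism of $M$, which is exactly what uniqueness in Giroux's theorem gives for free.
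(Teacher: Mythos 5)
Your proof is correct and follows essentially the same route as the paper: apply the existence half of Giroux's theorem to get an open book supporting $\xi$, apply Proposition~\ref{localnormalform} with that open book to obtain the contact forms $\lambda_\delta$ supported by it, and then use the uniqueness half of Giroux's theorem to produce the diffeomorphism. The extra bookkeeping you spell out (fixing a local model $\lambda_2$, and verifying that the same open book is used in both steps) is implicit in the paper's two-sentence argument but is a reasonable thing to make explicit.
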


\begin{proof}
 Existence of an open book decomposition supporting $\xi$ follows from the existence part of Giroux's theorem. On the other hand, proposition \ref{localnormalform} yields contact forms $\lambda_{\delta}$ such that $\ker\lambda_{\delta}$ is also supported by the same open book decomposition as $\xi$ for any $\delta> 0$. By the uniqueness part of Giroux's theorem, $\xi$ and $\ker\lambda_{\delta}$ are diffeomorphic. 
\end{proof}
It follows from our previous construction of the forms $\lambda_{\delta}$ that $\lambda_0$ satisfies $\lambda_0\wedge d\lambda_0>0$ on $\partial W\times D_{1-\varepsilon_0}$, and $\lambda_0=d\phi$ otherwise. For $\delta\rightarrow 0$ the Reeb vector fields $X_{\delta}$ will converge to some vector field $X_0$ which is the Reeb vector field of $\lambda_0$ if $r<1-\varepsilon_0$ and which equals $\frac{\partial}{\partial\phi}$ everywhere else.

\begin{proposition}\label{first-solution}
Let $M$ be a closed three dimensional manifold with an open book decomposition and a family of 1-forms $\lambda_{\delta}$, $\delta\ge 0$ as in proposition \ref{localnormalform}. Then there are
\begin{itemize}
 \item a smooth family $(\tilde{J}_{\delta})_{\delta\ge 0}$ of almost complex structures on $T({\mathbb R}\times M)$ which are ${\mathbb R}$-independent, which satisfy $\tilde{J}_{\delta}(X_{\delta})=-\partial/\partial\tau$ where $\tau$ denotes the coordinate on ${\mathbb R}$, so that $J_{\delta}:=\tilde{J}_{\delta}|_{\ker\lambda_{\delta}}$ are $d\lambda_{\delta}$-compatible whenever $\lambda_{\delta}$ is a contact form
\item A parametrization of the Giroux leaves $u_{\alpha}:\dot{S}\rightarrow M$, $\alpha\in[0,2\pi]$, where $\dot{S}=S\backslash\{p_1,\ldots,p_n\}$, and $S$ is a closed surface.
\item A smooth family of smooth functions $a_{\alpha}:\dot{S}\rightarrow{\mathbb R}$
\end{itemize}
such that $\tilde{u}_{\alpha}=(a_{\alpha},u_{\alpha}):\dot{S}\rightarrow{\mathbb R}\times M$ is a family of embedded $\tilde{J}_0$-holomorphic curves for a suitable smooth family of complex structures $j_{\alpha}$ on $S$ which restrict to the standard complex structure on the cylinder $[0,+\infty)\times S^1$ after introducing polar coordinates near the punctures. Moreover, all the punctures are positive\footnote{A puncture $p_j$ is called positive for the curve $(a_{\alpha},u_{\alpha})$ if $\lim_{z\rightarrow p_j}a_{\alpha}(z)=+\infty$.} and the family $(\tilde{u}_{\alpha})_{0\le\alpha\le 2\pi}$ is a finite energy foliation, and the curves $\tilde{u}_{\alpha}$ are $\tilde{J}_{\delta}$-holomorphic near the punctures.
\end{proposition}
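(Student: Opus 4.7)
My plan is to build $\tu_\alpha$ by explicitly parametrizing the Giroux pages $F_\alpha$ and then lifting them into $\R \times M$ via an appropriate vertical coordinate $a_\alpha$, so that the resulting maps solve \eqref{eq1} with vanishing harmonic form. I would fix the domain by letting $S$ be the closed surface obtained from $W$ by capping off each boundary circle with a disk, with $p_1,\dots,p_n \in S$ the centres of these caps; using the Alexander decomposition from Theorem \ref{Alexanderstheorem}, I would choose $u_\alpha : \dot S \to F_\alpha$ as a smooth family of diffeomorphisms onto the pages such that, in cylindrical coordinates $(s,t) \in [s_0,\infty) \times S^1$ near each puncture (with $s = -\log r$ and $t = \theta$), one has $u_\alpha(s,t) = (t, e^{-s}, \alpha)$ in the local-model coordinates $(\theta,r,\phi)$ around the corresponding binding component.

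Next I would construct $\tilde{J}_\delta$. On the local-model neighbourhood, where $\lambda_\delta \equiv \lambda_2 = \gamma_1(r)\,d\theta + \gamma_2(r)\,d\phi$ is $\delta$-independent, take for $r>0$ the basis $e_1 = \partial_r$, $e_2 = -\gamma_2\,\partial_\theta + \gamma_1\,\partial_\phi$ of $\ker \lambda_2$ and set
\[
  J_\delta\, e_1 \,=\, -\frac{\gamma_1'(r)}{r\,\mu(r)}\, e_2.
\]
Since $d\lambda_2(e_1, e_2) = \mu$, a direct check gives $d\lambda_2(e_1, J_\delta e_1) = -\gamma_1'(r)/r > 0$, so $J_\delta$ is $d\lambda_2$-compatible; the smoothness of $\gamma_1$ and $\gamma_2/r^2$ together with the order condition $A(r)=O(r)$ from Definition \ref{local-model} ensure this formula extends smoothly across $r=0$ when rewritten in Cartesian coordinates. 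Away from the binding $\ker\lambda_0 = TF_\alpha$, and I would choose $J_0$ to be any complex structure on this $2$-plane field compatible with the page orientation; for $\delta>0$ this deforms to a smooth family of $d\lambda_\delta$-compatible $J_\delta$ on the nearby contact distributions. Let $\tilde{J}_\delta$ be the $\R$-invariant extension satisfying $\tilde{J}_\delta(\partial_\tau) = X_\delta$.

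Now define $j_\alpha$ as the pullback $u_\alpha^* J_0$ on the bulk of $\dot S$ and as the standard complex structure $j\pas = \pat$ on each cylindrical end (these agree by construction), and set $a_\alpha \equiv 0$ on the bulk and $a_\alpha(s,t) = \int_{s_0}^{s} \gamma_1(e^{-s'})\,ds'$ on each end. On the bulk the pages are tangent to $\ker \lambda_0$, so $u_\alpha^*\lambda_0 = 0$ and \eqref{eq1} holds automatically with zero harmonic form. On the cylindrical ends a short computation using $\pil \circ Tu_\alpha(\pas) = -r\,e_1$ and $\pil \circ Tu_\alpha(\pat) = (\gamma_1'/\mu)\,e_2$ verifies both $\pil \circ Tu_\alpha \circ j_\alpha = J_0 \circ \pil \circ Tu_\alpha$ and $u_\alpha^*\lambda \circ j_\alpha = da_\alpha$. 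The asymptotics $a_\alpha(s) \sim \gamma_1(0)\,s$ as $s\to\infty$ identify each $p_j$ as a positive puncture asymptotic to the binding orbit of period $2\pi\gamma_1(0)$; finite energy follows from a standard Stokes argument, and $(\tu_\alpha)$ satisfies the definition of a finite energy foliation because the pages partition $M \setminus K$, are transverse to $X_\delta$ by the condition $\gamma_1'(r)\neq 0$, and the constructions are $\R$-equivariant. Since $\lambda_\delta$, $\tilde{J}_\delta$, $u_\alpha$ and $a_\alpha$ are all $\delta$-independent on the cylindrical ends, the curves are automatically $\tilde{J}_\delta$-holomorphic there.

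The main technical obstacle is the smoothness of $J_\delta$ across the binding orbit, where the polar frame $\{e_1,e_2\}$ degenerates, together with the smooth patching of the local-model $J_\delta$ to the bulk $J_0$ across the interface; both require careful use of the smoothness and asymptotic order conditions of Definition \ref{local-model}. A secondary point is arranging the family $u_\alpha$ to depend smoothly on $\alpha$ in a way compatible with the monodromy $h$ of the open book, which can be handled by fixing an isotopy supported away from the collar $C$ that interpolates between $h$ and the identity.
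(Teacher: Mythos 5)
Your construction is in the same spirit as the paper's, but it contains a concrete error that cannot be repaired within your framework: the choice $r(s)=e^{-s}$ near the punctures is incompatible with the smoothness of $J_\delta$ across the binding orbit. Requiring the Giroux leaf $u_\alpha(s,t)=(t,r(s)e^{i\alpha})$ to be $\tilde J_0$-holomorphic with the \emph{standard} complex structure on the cylindrical end forces the ODE
\[
r'(s)\;=\;\frac{\gamma_1'(r(s))}{\mu(r(s))\,h(r(s))},
\]
where $J_\delta e_1 = h(r)e_2$. Fixing $r'(s)=-r(s)$ as you do then forces $h(r)=-\gamma_1'(r)/(r\mu(r))$, which is exactly the coefficient in your formula for $J_\delta e_1$. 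But when you rewrite $J_\delta$ in the Cartesian frame $\{\varepsilon_1,\varepsilon_2\}$ near $r=0$, the coefficients contain terms of the form $r\gamma_1(r)h(r)$ and $1/(r\gamma_1(r)h(r))$ multiplied by $\cos\phi\sin\phi$, $\cos^2\phi$, $\sin^2\phi$; these have a $\phi$-independent limit (hence a chance to extend even continuously over $r=0$) if and only if
\[
\lim_{r\to 0}\,r\,\gamma_1(r)\,h(r)\;=\;\pm 1.
\]
With your $h$ this limit equals $-\gamma_1'(r)\gamma_1(r)/\mu(r)\big|_{r\to 0}=-\gamma_1''(0)/\gamma_2''(0)=-\kappa$, and Definition~\ref{local-model} explicitly requires $\kappa\notin\Z$, so $-\kappa\neq 1$. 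Thus your $J_\delta$ does not extend across the binding orbit; this is not a ``careful use of the smoothness conditions'' issue you can massage away, but a rigid obstruction.

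The correct resolution (the paper's) is to run the same logic in the opposite direction: first choose $h(r)=1/(r\gamma_1(r))$ near $r=0$ so that the smoothness condition $r\gamma_1(r)h(r)\equiv 1$ holds identically, and then let $r(s)$ be \emph{determined} by the ODE $r'(s)=\gamma_1'(r)\gamma_1(r)r/\mu(r)$, whose solutions behave like $r(s)\sim c\,e^{\kappa s}$ as $s\to\infty$ (Remark~\ref{decay-of-r}), not $e^{-s}$. This decay rate $\kappa$ is in fact the leading eigenvalue of the asymptotic operator at the binding orbit, which is another way to see that $r(s)=e^{-s}$ cannot be right unless $\kappa=-1$, excluded by assumption. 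Everything else in your outline --- the formula for $a_\alpha$, the verification that the two CR equations hold, the positivity of the punctures, the $\delta$-independence near the binding --- is sound once $h$ and $r(s)$ are corrected.
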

\begin{proof}
We parameterize the leaves of the open book decomposition $u_{\alpha}:\dot{S}\rightarrow M$, $0\le\alpha<2\pi$, and we assume that they look as follows near the binding
\begin{equation}\label{Giroux-leaves-near-binding}
\begin{array}{c}
u_{\alpha}:[0,+\infty)\times S^1\longrightarrow S^1\times D_1\\
 u_{\alpha}(s,t)=\left(t,r(s)e^{i\alpha}\right)\end{array}
\end{equation}
where $r$ are smooth functions with $\lim_{s\rightarrow\infty}r(s)=0$ to be determined shortly. We use the notation $(r,\phi)$ for polar coordinates on the disk $D=D_1$. We identify some neighborhood $U$ of the punctures of $\dot{S}$ with a finite disjoint union of half-cylinders $[0,+\infty)\times S^1$. Recall that the binding orbit is given by
\[
 x(t)=\left(\frac{t}{\gamma_1(0)},0,0\right)\ ,\ 0\le t \le 2\pi\gamma_1(0)
\]
and it has minimal period $T=2\pi\gamma_1(0)$. We define smooth functions $a_{\alpha}:\dot{S}\rightarrow{\mathbb R}$ by
\[
 a_{\alpha}(z):=\left\{\begin{array}{cc} \int_0^s\gamma_1(r(s'))\,ds'&\mbox{if}\ z=(s,t)\in[0,+\infty)\times S^1\subset U\\ 0&\mbox{if}\ z\notin U\end{array}\right.
\]
so that
\[
 u_{\alpha}^{\ast}\lambda_0\circ j\,=\,da_{\alpha},
\]
where $j$ is a complex structure on $\dot{S}$ which equals the standard structure $i$ on $[0,+\infty)\times S^1$, i.e. near the punctures. We want to turn the maps $\tilde{u}_{\alpha}=(a_{\alpha},u_{\alpha}):\dot{S}\rightarrow {\mathbb R}\times M$ into $\tilde{J}_0$-holomorphic curves for a suitable almost complex structure $\tilde{J}_0$ on ${\mathbb R}\times M$. Recall that the contact structure is given by
\[
\ker\lambda_{\delta}=\,\mbox{Span}\{\eta_1,\eta_2\}=\,\mbox{Span}\left\{\frac{\partial}{\partial r},-\gamma_2(r)\frac{\partial}{\partial \theta}+\gamma_1(r)\frac{\partial}{\partial\phi}\right\}.
\]
We define complex structures $J_{\delta}:\ker\lambda_{\delta}\rightarrow \ker\lambda_{\delta}$ by
\begin{equation}\label{cplx-str-normal-form}
 J_{\delta}(\theta,r,\phi)\left(-\gamma_2(r)\frac{\partial}{\partial \theta}+\gamma_1(r)\frac{\partial}{\partial\phi}\right):=-\frac{1}{h(r)}\,\frac{\partial}{\partial r}
\end{equation}
and
\[
 J_{\delta}(\theta,r,\phi)\frac{\partial}{\partial r}:=h(r)\left(-\gamma_2(r)\frac{\partial}{\partial \theta}+\gamma_1(r)\frac{\partial}{\partial\phi}\right)
\]
where $h:(0,1]\rightarrow {\mathbb R}\backslash\{0\}$ are suitable smooth functions. Also recall that $\gamma_1,\gamma_2$ depend on $\delta$ away from the binding orbit. We want $J_{\delta}$ to be compatible with d$\lambda_{\delta}$, i.e. 
\[
 d\lambda_{\delta}(\eta_1,J\eta_1)=h(r)\,\mu(r)>0\ \mbox{and}\ d\lambda_{\delta}(\eta_2,J\eta_2)=\frac{\mu(r)}{h(r)}>0
\]
so that $h(r)>0$. We also demand that $J_{\delta}$ extends smoothly over the binding $\{r=0\}$. Expressing the vectors $\eta_1$ and $\eta_2$ in Cartesian coordinates, we have
\[
 \eta_1=\frac{1}{r}\left(x\,\frac{\partial}{\partial x}+y\,\frac{\partial}{\partial y}\right)
\]
and
\[
 \eta_2=-\gamma_2(r)\frac{\partial}{\partial\theta}+\gamma_1(r)\,x\,\frac{\partial}{\partial y}-\gamma_1(r)\,y\,\frac{\partial}{\partial x}.
\]
We introduce the following generators of the contact structure:
\begin{eqnarray*}
 \varepsilon_1 & := & \gamma_1(r)\frac{\partial}{\partial y}-\frac{x\gamma_2(r)}{r^2}\frac{\partial}{\partial\theta}\\
& = & \frac{y\gamma_1(r)}{r}\eta_1\,+\,\frac{x}{r^2}\eta_2
\end{eqnarray*}
and
\begin{eqnarray*}
 \varepsilon_2 & := & \gamma_1(r)\frac{\partial}{\partial x}+\frac{y\gamma_2(r)}{r^2}\frac{\partial}{\partial\theta}\\
& = & \frac{x\gamma_1(r)}{r}\eta_1\,-\,\frac{y}{r^2}\eta_2.
\end{eqnarray*}
We compute from this
\[
 \eta_1=\frac{1}{r\gamma_1(r)}(y\,\varepsilon_1+x\,\varepsilon_2)\ ,\ \eta_2=x\,\varepsilon_1-y\,\varepsilon_2.
\]
Now
\begin{eqnarray*}
 J_{\delta}\varepsilon_1 & = & \frac{y\gamma_1(r)h(r)}{r}\eta_2-\frac{x}{r^2h(r)}\eta_1\\
& = & \left(\frac{1}{r}xy\gamma_1(r)h(r)-\frac{xy}{r^3h(r)\gamma_1(r)}\right)\,\varepsilon_1-\\
& & -\left(\frac{1}{r}y^2\gamma_1(r)h(r)+\frac{x^2}{r^3\gamma_1(r)h(r)}\right)\,\varepsilon_2
\end{eqnarray*}
and 
\begin{eqnarray*}
 J_{\delta}\varepsilon_2 & = & \frac{x\gamma_1(r)h(r)}{r}\eta_2+\frac{y}{r^2h(r)}\eta_1\\
& = & \left(-\frac{1}{r}xy\gamma_1(r)h(r)+\frac{xy}{r^3h(r)\gamma_1(r)}\right)\,\varepsilon_2\\
& & +\left(\frac{1}{r}x^2\gamma_1(r)h(r)+\frac{y^2}{r^3\gamma_1(r)h(r)}\right)\,\varepsilon_1.
\end{eqnarray*}
Inserting $x=r\cos\phi$, $y=r\sin\phi$ and demanding for a $\phi$-independent limit as $r\rightarrow 0$ we arrive at the condition that $r\,h(r)\,\gamma_1(r)\,\equiv\,\pm\,1$ for small $r$. Recalling that we need $h>0$ we obtain
\[
 h(r)=\frac{1}{r\,\gamma_1(r)}\ \mbox{for small $r$}.
\]
As usual, we continue $J_{\delta}$ to an almost complex structure $\tilde{J}_{\delta}$ on ${\mathbb R}\times M$ by setting
\[
 \tilde{J}_{\delta}(\theta,r,\phi)\frac{\partial}{\partial\tau}:=X_{\delta}(\theta,r,\phi)
\]
where $\tau$ denotes the coordinate in the ${\mathbb R}$-direction. We emphasize that $\tilde{J}_{\delta}$ also makes sense for $\delta=0$. We will now arrange $r(s)$ in (\ref{Giroux-leaves-near-binding}) such that the Giroux leaves $\tilde{u}_{\alpha}=(a_{\alpha},u_{\alpha})$ become $\tilde{J}_0$-holomorphic curves\footnote{The calculation shows that we can make them $J_{\delta}$-holomorphic for all $\delta\ge 0$ near the binding.}.
We compute for $r\le 1-\varepsilon_0$
\begin{eqnarray*}
\partial_s \tilde{u}_{\alpha}+\tilde{J}_0(u_{\alpha})\partial_t\tilde{u}_{\alpha} & = & \gamma_1(r)\frac{\partial}{\partial\tau}+r'\frac{\partial}{\partial r}+\tilde{J}_0(u_{\alpha})\left(\frac{\partial}{\partial\theta}\right)\\
& = & \gamma_1(r)\frac{\partial}{\partial\tau}+r'\frac{\partial}{\partial r}+\tilde{J}_0(u_{\alpha})(\gamma_1(r)\,X_{\delta}(u_{\alpha}))+\\
& & +\tilde{J}_0(u_{\alpha})\left(\frac{\partial}{\partial\theta}-\gamma_1(r)X_{\delta}(u_{\alpha})\right)\\
& = & r'\frac{\partial}{\partial r}+\tilde{J}_0(u_{\alpha})\left(\frac{\gamma_1'(r)}{\mu(r)}\left(\gamma_1(r)\frac{\partial}{\partial\phi}-\gamma_2(r)\frac{\partial}{\partial\theta}\right)\right)\\
& = & \left(r'-\frac{\gamma_1'(r)}{\mu(r)h(r)}\right)\frac{\partial}{\partial r}
\end{eqnarray*}
hence the Giroux leaves satisfy the equation if we choose $r$ to be a solution of the ordinary differential equation
\[
r'(s)=\frac{\gamma_1'(r(s))}{\mu(r(s))\,h(r(s))}.
\]
Note that $r'(s)<0$. We choose also $h(r)\equiv 1$ for $r\ge 1-\varepsilon_0$. We continue the almost complex structures $J_{\delta}:\ker\lambda_{\delta}\rightarrow\ker\lambda_{\delta}$ (which were only defined near the binding) smoothly to all of $M$. Away from the binding we have $X_{\delta}=\partial/\partial\phi$, and we extend $J_{\delta}$ as before to $T({\mathbb R}\times M)$. Away from the binding, if $\delta=0$, we have that $\ker\lambda_0$ coincides with the tangent spaces of the pages of the open book decomposition. Because $a_{\alpha}$ is constant away from the binding, the solutions $\tilde{u}_{\alpha}$ which we constructed near the binding fit together smoothly with the pages of the open book decomposition and solve the holomorphic curve equation for the almost complex structure $J_0$.  
\end{proof}

\begin{remark}\label{decay-of-r}
Near the binding orbit the function $r(s)$ satisfies a differential equation of the form 
\[
 r'(s)=\Lambda(r(s))\,r(s):=\frac{\gamma'_1(r(s))\gamma_1(r(s))}{\mu(r(s))}\,r(s)
\]
and
\[
 \lim_{r\rightarrow 0}\Lambda(r)=\frac{\gamma''_1(0)}{\gamma_2''(0)}=:\kappa.
\]
Writing $r(s)=c(s)e^{\kappa s}$ the function $c(s)$ satisfies $c'(s)=(\Lambda(r(s))-\kappa)c(s)$, hence it is a decreasing function which converges to a constant as $s\rightarrow+\infty$.\\

 We will return to examples \ref{local-example-1} and \ref{local-example-2} and compute $r(s)$ for large $s$. The differential equation in the case of example \ref{local-example-2} for large $s$ is
\[
 r'(s)\,=\,\frac{\gamma'_1(r(s))\gamma_1(r(s))}{\mu(r(s))}r(s)\,=\,-kT(1-r^2(s))r(s),
\]
so that
\[
 r(s)=\frac{1}{\sqrt{1+c\,e^{2kTs}}},
\]
where $c$ is a constant. In example \ref{local-example-1} the differential equation reads
\[
 r'(s)\,=\,\frac{\gamma'_1(r(s))\gamma_1(r(s))}{\mu(r(s))}r(s)\,=\,-\frac{kT}{1-r^2(s)}r(s),
\]
and solutions satisfy
\[
 r(s)=c\,e^{-kT\,s}\,e^{\frac{1}{2}r^2(s)}.
\]
\end{remark}

\subsection{Functional Analytic set-up and the Implicit Function Theorem}

In the following theorem we will prove the existence of a smooth family of solutions near a given solution. In proposition \ref{first-solution} we constructed a finite energy foliation for the data $(\lambda_0,J_0)$ with vanishing harmonic form. The form $\lambda_0$ however is only a confoliation form. We will produce solutions for the perturbed data $(\lambda_{\delta},J_{\delta})$, and harmonic forms will appear if the surface $S$ is not a sphere. The key result is an application of the implicit function theorem in a suitable setting. 

\begin{theorem}\label{IFT}
Assume one of the following:
\begin{enumerate}
\item Let $(a_0,u_0):\dot{S}\rightarrow{\mathbb R}\times M$ be one of the $\tilde{J}_0$-holomorphic curves described in proposition \ref{first-solution} with complex structure $j_0$ on $S$ (we refer to such $u_0$ as a 'Giroux leaf') and confoliation form $\lambda_0$ or
\item let $(\dot{S},j_0,a_0,u_0,\gamma_0)$ be a solution of the differential equation (\ref{eq1}) for some $d\lambda_0$-compatible complex structure $J_0:\ker\lambda_0\rightarrow\ker\lambda_0$ which, near the binding orbit, agrees with (\ref{cplx-str-normal-form}), and where $\lambda_0$ is a contact form which is a local model near the binding. Assume that $u_0$ is an embedding and that it is of the form $u_0=\phi_g(v_0)$ where $g:S\rightarrow {\mathbb R}$ is a smooth function, $\phi$ is the flow of the Reeb vector field and where $v_0:\dot{S}\rightarrow M$ is a Giroux leaf as in proposition \ref{first-solution}
\end{enumerate}
Let $J_{\delta}$ be a smooth family of $d\lambda_{\delta}$-compatible complex structures also agreeing with (\ref{cplx-str-normal-form}) near the binding orbit, where $(\lambda_{\delta})_{-\varepsilon<\delta<+\varepsilon}$, $\varepsilon>0$ is a smooth family of 1-forms which are contact forms for $\delta\neq 0$ and local models near the binding. Then there is a smooth family $$(S,j_{\delta,\tau},a_{\delta,\tau},u_{\delta,\tau},\gamma_{\delta,\tau},J_{\delta})_{-\varepsilon<\delta,\tau<+\varepsilon}$$ of solutions of equation (\ref{eq1}) so that $u_{\delta,\tau}(\dot{S})\cap u_{\delta,\tau'}(\dot{S})=\emptyset$ whenever $\tau\neq \tau'$ and each $u_{\delta,\tau}$ is an embedding.
\end{theorem}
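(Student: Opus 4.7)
The plan is to realize the system (\ref{eq1}), augmented by the contact-form parameter $\delta$, as a smooth section $\mathcal{F}$ of a suitable Banach bundle over a Banach manifold of data $(j,u,\gamma,a)$, and then apply the implicit function theorem at the base solution $(j_0,u_0,a_0,\gamma_0)$. For the functional-analytic set-up I would parametrize nearby complex structures via a Teichm\"uller slice through $j_0$; parametrize maps close to $u_0$ by $u=\exp_{u_0}(\xi)$ with $\xi$ a section of $u_0^{\ast}TM$ in an exponentially weighted Sobolev space on $\dot S$, the weight chosen inside the spectral gap of the asymptotic operator at each nondegenerate elliptic binding orbit (existence of this gap is guaranteed by condition~(5) of Definition~\ref{local-model}); and parametrize $\gamma$ in the finite-dimensional space of $j$-harmonic $1$-forms. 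The function $a$ is recovered from the second equation of (\ref{eq1}) up to an $\mathbb{R}$-constant, which is then quotiented out by the symplectization-translation action.

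With this set-up in place, the next step is to linearize $\mathcal{F}$ at the base solution. The derivative $D\mathcal{F}$ splits into a perturbed Cauchy-Riemann operator acting on the normal directions to $u_0$, plus finite-dimensional contributions from the Teichm\"uller slice, the harmonic forms, and the parameter $\delta$. The Fredholm index can then be computed in terms of the Conley-Zehnder indices of the binding orbits (which are known explicitly from the normal-form discussion, using that $\kappa\notin\mathbb{Z}$), the Euler characteristic of $\dot S$, and the first Betti number of $S$. The expected dimension of the kernel, modulo the $\mathbb{R}$-action, is exactly two, matching the desired $(\delta,\tau)$-family.

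The main obstacle is establishing surjectivity of $D\mathcal{F}$. Here I would invoke Wendl's automatic transversality theorem for embedded finite-energy holomorphic curves in symplectizations of contact (and confoliation) three-manifolds: transversality is automatic whenever the normal Chern number is sufficiently small relative to the index. For a Giroux leaf $u_0$ the $S^1$-family of neighbouring pages of the open book supplies a nowhere-vanishing section of the normal bundle, which forces the normal Chern number to vanish; combined with all punctures being positive and asymptotic to nondegenerate elliptic orbits, this places us squarely in Wendl's regime. A further consequence of the automatic transversality criterion—together with positivity of zeros of holomorphic sections of the normal bundle—is that every non-trivial element $\nu$ of the kernel is nowhere vanishing on $\dot S$, and in particular so is the generator of the $\tau$-direction. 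In case~(2) the Reeb-flow deformation $u_0=\phi_g(v_0)$ preserves the asymptotic data, the normal bundle, and its Chern number, so the same argument goes through; the confoliation case~(1) is no obstacle since Wendl's theorem is insensitive to this weakening.

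With surjectivity and a two-dimensional kernel in hand, the implicit function theorem produces the smooth family $(j_{\delta,\tau},a_{\delta,\tau},u_{\delta,\tau},\gamma_{\delta,\tau})$ of solutions to (\ref{eq1}). Embeddedness of each $u_{\delta,\tau}$ is an open condition and so persists under small perturbation. Finally, the disjointness $u_{\delta,\tau}(\dot S)\cap u_{\delta,\tau'}(\dot S)=\emptyset$ for $\tau\neq\tau'$ follows from positivity of intersections of pseudoholomorphic curves in $\mathbb{R}\times M$ combined with the fact that $\nu$ has no zeros: at the infinitesimal level the nearby curves are everywhere transverse to $u_0$, so any intersection for small $\tau\neq\tau'$ would contribute a strictly positive local intersection number, contradicting that $u_{\delta,\tau}$ and $u_{\delta,\tau'}$ are homotopic through disjoint-at-infinity curves. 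Shrinking $\varepsilon$ if necessary yields the claimed local foliation.
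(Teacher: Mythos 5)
Your appeal to Wendl's automatic transversality criterion is the step that does not survive scrutiny, and it is exactly at the heart of the theorem. Take your own observation seriously: the $S^1$-family of pages gives a nowhere-vanishing section of the normal bundle with the correct asymptotic behaviour, so $c_N(\tilde u_0)=0$; and since the binding orbits are elliptic with $\kappa\notin\Z$, all Conley--Zehnder indices are odd, so $\#\Gamma_0=0$. Plugging these into the identity $2c_N(\tilde u_0)=\mathrm{ind}(\tilde u_0)-2+2g+\#\Gamma_0$ yields $\mathrm{ind}(\tilde u_0)=2-2g$ for the unperturbed normal Cauchy--Riemann operator, so the automatic transversality inequality $\mathrm{ind}>c_N$ reduces to $2-2g>0$, which fails for every page of genus $g\ge1$. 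This is not a fixable computational slip: the unperturbed operator genuinely has an expected $2g$-dimensional cokernel, which is precisely why the harmonic form $\gamma$ was introduced into (\ref{eq1}) in the first place. Including $\gamma$ as a parameter (as you do) brings the augmented index up to $2$, but Wendl's theorem is a statement about the unaugmented operator and says nothing about surjectivity of the augmented one. As written, your argument only covers the planar case $g=0$, which is already the content of \cite{Wendl}.

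The paper's surjectivity argument is structurally different and is what needs to replace this appeal. The key move is the ansatz $u_\delta=\phi_{f_\delta}(u_0)$, $a_\delta=a_0+b_\delta$, which collapses the whole system to a scalar, $\C$-valued Cauchy--Riemann-type equation (\ref{IFT-eq2}) for $b+if$ on the \emph{closed} surface $S$ (no weighted spaces, no Teichm\"uller slice; the condition $\kappa\le-\tfrac12$ in Definition~\ref{local-model} is exactly what makes the inhomogeneity lie in $L^p(S)$). Riemann--Roch then gives $\mathrm{ind}\,\bar\partial_{j_0}=2-2g$ on $W^{1,p}(S,\C)$, and the cokernel is identified, via the injective map $\gamma\mapsto[\gamma+i(\gamma\circ j_0)]$, with the $2g$-dimensional space ${\mathcal H}^1_{j_0}(S)$ of harmonic $1$-forms, so the augmented operator has index exactly $2$. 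Surjectivity is then obtained by a degree count: any $\zeta\in\ker T$ has only isolated zeros of positive local degree by the similarity principle, while the total degree of a $\C$-valued function over the closed surface $S$ is zero, so $\zeta$ is nowhere vanishing; hence $\dim\ker T\le 2$, which with $\mathrm{ind}\,T=2$ forces surjectivity. Your remark that the kernel generator is nowhere zero is correct and is used for the disjointness of the $u_\tau$, but it is the conclusion of this degree count rather than an output of automatic transversality. Finally, note that case~(1) (the confoliation form $\lambda_0$) is not handled by asserting insensitivity of Wendl's theorem to the weakening; it enters as the value $\delta=0$ of the parameter, and the implicit function theorem is what moves you into the genuine contact region $\delta\ne0$.
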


\begin{proof}
 In both cases we wish to find solutions of (\ref{eq1}) for the data $(\lambda_{\delta},J_{\delta})$ of the form
\[
 u_{\delta}(z)\,=\,\phi_{f_{\delta}(z)}(u_0(z))\ \ ,\ \ a_{\delta}(z)\,=\,b_{\delta}(z)+a_0(z)
\]
where $t\mapsto \phi_t=\phi^{\delta}_t$ is the flow of the Reeb vector field $X_{\delta}$ of $\lambda_{\delta}$, and where $b_{\delta}+if_{\delta}:S\rightarrow{\mathbb C}$ is a smooth function defined on the unpunctured surface. We will derive an equation for the unknown function $b_{\delta}+if_{\delta}$. From now we will suppress the superscript $\delta$ in the notation unless for $\delta=0$. Because of the first equation in (\ref{eq1}) the complex structure on $S$ is then determined by $f$, denote it by $j=j_f$, and it is given by
\begin{equation}\label{implicit-eq1}
 j_f(z)  =  (\pi_{\lambda} Tu(z))^{-1}\circ J(u(z))\circ \pi_{\lambda} Tu(z) 
\end{equation}
Note that this is well defined because $u$ is transverse to the Reeb vector field so that $\pil Tu(z):T_zS\rightarrow \ker\lambda(u(z))$ is an isomorphism.
By the second equation of (\ref{eq1}), we then have to solve the equation $df \circ j_f+u_0^{\ast}\lambda\circ j_f=da+\gamma$ for $a,f,\gamma$ on $\dot{S}$ which is equivalent to the equation
\begin{equation}\label{IFT-eq1}
\bar{\partial}_{j_f}(a+if)=u_0^{\ast}\lambda\circ j_f-i(u_0^{\ast}\lambda)-\gamma-i(\gamma\circ j_f).
\end{equation}
Recalling that we are looking for $a$ of the form $a=a_0+b$ where $b$ is a suitable real valued function defined on the whole surface $S$. We obtain the differential equation
\begin{equation}\label{IFT-eq2}
\bar{\partial}_{j_f}(b+if)=u_0^{\ast}\lambda\circ j_f-i(u_0^{\ast}\lambda)-\bar{\partial}_{j_f}a_0-\gamma-i(\gamma\circ j_f).
\end{equation}
and it follows from a straight forward calculation (see appendix \ref{local-computations}) that all expressions on the right hand side of equation (\ref{IFT-eq2}) are bounded near the punctures, in particular they are contained in the spaces $L^p(T^{\ast}S\otimes {\mathbb C})$ for any $p$. This is what the assumption $\kappa\le-\frac{1}{2}$ from definition \ref{local-model} is needed for. We will work in the function space $b+if\in W^{1,p}(S,\C)$ where $p>2$. For any complex structure $j$ on $S$ the space $L^{p}(T^{\ast}S\otimes\Complex)$ of complex valued 1--forms of class $L^p$ decomposes into complex linear and complex antilinear forms (with respect to $j$). We use the notation
\[
L^{p}(T^{\ast}S\otimes\Complex)=L^{p}(T^{\ast}S\otimes\Complex)^{1,0}_j\oplus L^{p}(T^{\ast}S\otimes\Complex)^{0,1}_j.
\]
The operator $b+if\mapsto \bar{\partial}_{j_f}(b+if)$ is then a section in the vector bundle
\[
L^{p}(T^{\ast}S\otimes\Complex)^{0,1}:=\bigcup_{b+if\in W^{1,p}(S,\Complex)}\{b+if\}\times L^{p}(T^{\ast}S\otimes\Complex)^{0,1}_{j_f}\rightarrow W^{1,p}(S,\Complex).
\]
This vector bundle is of course trivial, but here are some explicit local trivializations for $f\,,\,g\in W^{1,p}(S,\R)$ sufficiently close to each other: 
\begin{equation}\label{IFT-eq3}
\Psi_{fg}:L^{p}(T^{\ast}S\otimes\Complex)^{0,1}_{j_f}\widetilde{\longrightarrow} L^{p}(T^{\ast}S\otimes\Complex)^{0,1}_{j_g}
\end{equation}
\[
\tau\longmapsto \tau+i(\tau\circ j_g).
\]
If we write 
\[
\tau+i(\tau\circ j_g)=\tau\circ(\mbox{Id}_{TS}\,-j_f\circ j_g),
\]
we see that $\Psi_{fg}$ is invertible with
\[
\Psi_{fg}^{-1}\tau=\tau\circ(\mbox{Id}_{TS}\,-j_f\circ j_g)^{-1}.
\]
It follows from the Hodge decomposition theorem that every cohomology class $[\sigma]\in H^1(S,\R)$ has a unique harmonic representative $\psi_j(\sigma)\in {\mathcal H}^1_j(S)$ where ${\mathcal H}^1_{j}(S)$ is defined as 
\begin{equation}
{\mathcal H}_j^1(S):=\{\gamma\in {\mathcal E}^1(S)\,|\,d\gamma=0\,,\,d(\gamma\circ j)=0\}
\end{equation}
and where ${\mathcal E}^1(S)$ denotes the space of all (smooth) real valued 1--forms on $S$, and we write ${\mathcal E}^{0,1}(S)={\mathcal E}_j^{0,1}(S)$ for the space of complex antilinear 1--forms on $S$ with respect to $j$, i.e. complex valued 1--forms $\sigma$ such that $i\,\sigma+\sigma\,j=0$. Note that our definition coincides with the set of closed and co-closed 1-forms on $S$. Moreover, by elliptic regularity, we may also consider Sobolev forms. We will identify $H^1(S,\R)$ with $\R^{2g}$, and we consider the following parameter dependent section in the bundle $L^p(T^{\ast}S\otimes \C)^{0,1}\rightarrow W^{1,p}(S,\C)$
\begin{equation}\label{nonlinear-op}
F:W^{1,p}(S,\C)\times \R^{2g}\longrightarrow L^p(T^{\ast}S\otimes\C)^{0,1}
\end{equation}
\begin{eqnarray*}
F(b+if,\sigma) & := & \bar{\partial}_{j_f}(b+if)-u_0^{\ast}\lambda\circ j_f+i(u_0^{\ast}\lambda)+\\
& & +\bar{\partial}_{j_f}a_0+\psi_{j_f}(\sigma)+i(\psi_{j_f}(\sigma)\circ j_f)
\end{eqnarray*}
with $j_f$ as in (\ref{implicit-eq1}). Recalling that $z\mapsto j_f(z)$ may not be differentiable, we interpret the equation $d(\gamma\circ j_f)=0$ in the sense of weak derivatives. The solution set of (\ref{IFT-eq2}) is then the zero set of $F$. We consider the real parameter $\delta$ which we dropped from the notation, fixed at the moment. For $g\equiv 0$ and $b+if$ small in the $W^{1,p}$--norm we consider the composition $\hat{F}(b+if,\sigma)=\Psi_{fg}(F(b+if,\sigma))$. Its linearization in the point $(b+if,\sigma)=(0,\sigma_0)$ where $\sigma_0$ is defined by $\psi_{j_0}(\sigma_0)=\gamma_0$, and where $F(0,\sigma_0)=0$, is
\[
D\hat{F}(0,\sigma_0):W^{1,p}(S,\C)\times\R^{2g}\longrightarrow L^p(T^{\ast}S\otimes\C)^{0,1}_{j_0}
\]
\begin{eqnarray*}
D\hat{F}(0,\sigma_0)(\zeta,\sigma) & = & \bar{\partial}_{j_0}\zeta+\psi_{j_0}(\sigma)+i(\psi_{j_0}(\sigma)\circ j_0)+L\,\zeta,
\end{eqnarray*}
where 
\[
L:W^{1,p}(S,\C)\rightarrow W^{1,p}(T^{\ast}S\otimes\C)^{0,1}_{j_0}\hookrightarrow L^p(T^{\ast}S\otimes\C)^{0,1}_{j_0}
\]
is a compact linear map\footnote{The linear map $\zeta\mapsto L\zeta$ only depends on the imaginary part of $\zeta$. It is actually given by 
\[
L\zeta = -\frac{1}{2}u_0^{\ast}\lambda\circ(A\zeta+j_0A\zeta j_0)+\frac{i}{2}u_0^{\ast}\lambda\circ(j_0A\zeta-A\zeta j_0)+B\zeta+i\,B\zeta\,j_0
\]
where
\[
B\zeta=\left.\frac{d}{d\tau}\right|_{\tau=0}\psi_{j_{\tau\,k}}(\sigma_0)\ ,\ \zeta=h+ik
\]
and
\[
A\zeta  = \left.\frac{d}{d\tau}\right|_{\tau=0}j_{\tau\,k}=\,h\,(\pil Tu_0)^{-1}\Big[J(u_0)DX_{\lambda}(u_0)-DX_{\lambda}(u_0)J(u_0)+DJ(u_0)X_{\lambda}(u_0)\Big](\pil Tu_0).
\]
} because we are working on a compact domain $S$. The linear term $L$ therefore does not contribute to the Fredholm--index of $D\hat{F}(0,\sigma_0)$. We claim that the operator 
\[
W^{1,p}(S,\C)\times\R^{2g}\longrightarrow L^p(T^{\ast}S\otimes\C)^{0,1}_{j_0}
\]
\[
(\zeta,\sigma)\longmapsto \bar{\partial}_{j_0}\zeta+\psi_{j_0}(\sigma)+i(\psi_{j_0}(\sigma)\circ j_0)
\]
is a surjective Fredholm operator of index two. Then we would have $\,\mbox{ind}(D\hat{F}(0,\sigma_0))=2$ as well. Here is the argument: The Riemann Roch theorem asserts that the kernel and the cokernel of the Cauchy-Riemann operator $\bar{\partial}_j$ (acting on smooth complex valued functions on $S$) are both finite dimensional and that
\[
\mbox{dim}_{{\Real}}\ker\bar{\partial}_j-\mbox{dim}_{{\Real}}({\mathcal E}^{0,1}(S)/\mbox{Im}\,\bar{\partial}_j)=2-2g,
\]
where $g$ is the genus of the surface $S$. The only holomorphic functions on $S$ are the constant functions, hence ${\mathcal E}^{0,1}(S)/\mbox{Im}\,\bar{\partial}_j$ has dimension $2g$.\\
On the other hand, the vector space ${\mathcal H}_j^1(S)$ of all (real--valued) harmonic 1--forms on $S$ also has dimension $2g$ (see \cite{Forster}). We consider now the linear map
\[
\Psi:{\mathcal H}_j^1(S)\longrightarrow {\mathcal E}^{0,1}(S)/\mbox{Im}\,\bar{\partial}_j
\]
\[
\Psi(\gamma):=[\gamma+i(\gamma\circ j)],
\]
where $[\,.\,]$ denotes the equivalence classes of $(0,1)$--forms. Assume $\Psi(\gamma)=[0]$, i.e. there is a complex--valued smooth function $f=u+iv$ on $S$ such that $\bar{\partial}_jf=\gamma+i(\gamma\circ j)$. Since $\gamma$ is a harmonic 1--form, we conclude that $d(dv\circ j)=d(du\circ j)=0$, i.e. both $u$ and $v$ are harmonic. Since there are only constant harmonic functions on $S$ we obtain $\gamma=0$, i.e. $\Psi$ is injective and also bijective. Hence $(0,1)$--forms $\gamma+i(\gamma\circ j)$ with $\gamma\in{\mathcal H}_j^1(S)$ make up the cokernel of $\bar{\partial}_j:C^{\infty}(S,{\mathbb C})\rightarrow {\mathcal E}^{0,1}(S)$.\\ 
This proves the claim that the operator $D\hat{F}(0,\sigma_0)$ is Fredholm of index two. We will now show that the operator $D\hat{F}(0,\sigma_0)$ is surjective. Using the decomposition
\[
L^p(T^{\ast}S\otimes\C)^{0,1}_{j_0}=R(\bar{\partial}_{j_0})\oplus{\mathcal H}^1_{j_0}(S)
\]
and denoting the corresponding projections by $\pi_1,\pi_2$ we see that it suffices to prove surjectivity of the operator
\[
T:W^{1,p}(S,\C)\rightarrow R(\bar{\partial}_{j_0})
\]
\[
T\zeta:=\bar{\partial}_{j_0}\zeta+\pi_1(L\zeta)
\]
which is a Fredholm operator of index $2$. Assume $\zeta\in\ker T$. Unless $\zeta\equiv 0$ the set $\{z\in S\,|\,\zeta(z)=0\}$ consists of finitely many points by the Similarity Principle \cite{HZ-book} and the local degree of each zero is positive. On the other hand, the sum of all the local degrees has to be zero, hence elements in the kernel of $T$ are nowhere zero. Actually, if $h+ik\in\ker T$ then even $k$ is nowhere zero because $h+c+ik\in \ker T$ for any real constant $c$ since the zero order term $L$ only depends on the imaginary part of $\zeta$. Therefore\footnote{Indeed, otherwise we would be able to find three linearly independent elements in the kernel $\zeta_1,\zeta_2,\zeta_3$. Because ${\mathbb C}$ has real dimension two we can find real numbers $\alpha_1,\alpha_2,\alpha_3$, not all simultaneously zero, and a point $z\in S$ such that $\sum_{j=1}^3\alpha_j\zeta_j(z)\,=\,0$. Then $\zeta=\sum_{j=1}^3\alpha_j\zeta_j$ is in the kernel of $T$ and $\zeta(z)=0$, a contradiction.} $\mbox{dim}\,\ker T\le 2$, and since the Fredholm index of $T$ equals $2$, we actually have $\mbox{dim}\,\ker T=2$. This proves surjectivity of $T$ and also of $D\hat{F}(0,\sigma_0)$ so that the set ${\mathcal M}$ of all pairs $(b+if,\gamma)$ solving the differential equation (\ref{IFT-eq2}) is a two dimensional manifold with $T_{(0,\gamma_0)}{\mathcal M}=\ker D\hat{F}(0,\gamma_0)$. If we add a real constant to $b+if$ then we obtain again a solution of (\ref{IFT-eq2}). If we divide ${\mathcal M}$ by this $\R$--action then we obtain a one--dimensional family of solutions $(\tu_{\tau})_{-\varepsilon<\tau<\varepsilon}$ with $\tu_{\tau}=(a_{\tau},u_{\tau})$ for which $u_{\tau}=\phi_{f_{\tau}}(u_0)$, and the functions $f_{\tau}$ do not vanish at any point. Therefore, we have $u_0(\dot{S})\cap u_{\tau}(\dot{S})=\emptyset$ and also $u_{\tau'}(\dot{S})\cap u_{\tau}(\dot{S})=\emptyset$ if $\tau\neq \tau$. Moreover, the maps $u_{\tau}$ are transverse to the Reeb vector field by construction.
\end{proof}

\section{From local foliations to global ones}\label{compactness-section}
The aim of this section is to show that a family of solutions produced by the implicit function theorem (theorem \ref{IFT}) can be enlarged further. For this purpose a compactness result is needed for which we are setting the stage now.\\ 
First, we will summarize a result by Richard Siefring (theorem 2.2 in \cite{Siefring}) which will be used later on:
\begin{theorem}\label{relative-asymptotics}
Let $\tilde{u}\in{\mathcal M}(P,J)$ and $\tilde{v}\in{\mathcal M}(P,J)$, let maps $U,V:[R,\infty)\times S^1\rightarrow C^{\infty}(P^{\ast}\xi)$ be asymptotic representatives of $\tilde{u}$ and $\tilde{v}$, respectively, and assume that $U-V$ does not vanish identically. Then there exists a negative eigenvalue $\lambda$ of the asymptotic operator ${\bf A}_{P,J}$ and an eigenvector $e$ with eigenvalue $\lambda$ so that
\[
 U(s,t)-V(s,t)=e^{\lambda s}(e(t)+r(s,t))
\]
where the map $r$ satisfies for every $(i,j)\in{\mathbb N}^2$, a decay estimate of the form
\[
 |\nabla^i_s\nabla^j_tr(s,t)|\le M_{ij}e^{-ds}
\]
with $M_{ij}$ and $d$ positive constants.
\end{theorem}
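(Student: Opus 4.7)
The overall strategy is to reduce the relative asymptotic behaviour of $\tilde u$ and $\tilde v$ to the asymptotic behaviour of a single section $W:=U-V$ solving a linear Cauchy--Riemann type equation whose zero-order term is asymptotic to the operator ${\bf A}_{P,J}$. Once this is done, the expansion in the statement is extracted from the spectral theory of ${\bf A}_{P,J}$ in exactly the same way that the individual decay estimates for $\tilde u$ and $\tilde v$ are obtained in the single-curve asymptotic analysis of Hofer--Wysocki--Zehnder.

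More concretely, the plan is as follows. In a tubular neighbourhood of $P$ pick a unitary trivialisation of $P^{\ast}\xi$ and use the flow of $X_\lambda$ together with this trivialisation to build coordinates $(s,t,x)\in[R,\infty)\times S^1\times\R^2$ in which the projected Cauchy--Riemann equation $\pi\circ Tu\circ j=J\circ\pi\circ Tu$ takes the form
\[
\partial_s x + J_0\,\partial_t x + S_\infty(t)\,x \;=\; N(s,t,x,\partial x),
\]
with $S_\infty(t)$ the matrix representative of ${\bf A}_{P,J}$ and $N$ a nonlinearity that vanishes to second order at $x=0$ together with its derivatives. The asymptotic representatives $U$ and $V$ are both of this form, and subtracting them yields
\[
\partial_s W + J_0\,\partial_t W + S_\infty(t)\,W + B(s,t)\,W \;=\;0,
\]
where $B(s,t)=\int_0^1\! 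D_{(x,\partial x)}N(s,t,V+\tau W,\partial V+\tau\partial W)\,d\tau$ is a linear operator-valued perturbation. Because $U,V\to0$ as $s\to\infty$ (in $C^k$, by the single-curve asymptotics), $B(s,t)$ decays to $0$ in all derivatives, so the equation for $W$ is an asymptotically autonomous linear Floer-type equation whose limiting operator is precisely ${\bf A}_{P,J}$.

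From here the proof follows the now-classical spectral scheme. First one establishes an a priori exponential decay bound: defining $g(s):=\tfrac12\|W(s,\cdot)\|_{L^2(S^1)}^2$, one differentiates twice in $s$ and uses the equation together with the self-adjointness of $S_\infty(t)$ to derive a differential inequality $g''(s)\ge c^2 g(s)$ away from the spectrum, valid for $s$ large (so that $B$ is small); this gives $g(s)\le Ce^{-2|c|s}$ for some $c$ in a spectral gap of ${\bf A}_{P,J}$. Next, one upgrades this by iterating the argument. Let $\lambda$ be the largest negative eigenvalue such that $e^{-\lambda s}W(s,\cdot)$ does not tend to $0$ in $L^2(S^1)$; projecting onto each spectral subspace and using the asymptotic decoupling into ODEs of the form $\xi'_n=\mu_n\xi_n+o(1)$ shows that $e^{-\lambda s}W(s,\cdot)$ converges in $L^2(S^1)$ to an eigenvector $e$ of ${\bf A}_{P,J}$ with eigenvalue $\lambda$, and by elliptic bootstrapping the convergence is in every $C^k$. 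The remainder $r(s,t):=e^{-\lambda s}W(s,t)-e(t)$ then satisfies a similar linear equation but lies, in leading order, in the spectral subspaces corresponding to eigenvalues strictly below $\lambda$, and the same a priori argument applied to $r$ produces the estimate $|\nabla^i_s\nabla^j_t r|\le M_{ij}e^{-ds}$ with $d$ equal to the distance from $\lambda$ to the next eigenvalue below it.

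The main obstacle is the control of the nonlinear perturbation $B(s,t)$: the argument requires not just that $B$ is small, but that it does not mix the spectral subspaces of ${\bf A}_{P,J}$ badly enough to destroy the clean leading-order eigenvector extraction. This is handled by carefully choosing the tubular coordinates so that the quadratic part of $N$ has the right structure, and by establishing that $B$ itself decays exponentially in $s$ (inherited from the exponential decay of $U$ and $V$ provided by the single-curve asymptotic analysis), so that in the iteration step one can always pass a little beyond the nominal decay rate. A secondary technical point, which must be dealt with to make the $L^2$ arguments go through, is that the differential equation for $g(s)$ is only a differential \emph{inequality} near the spectral values of ${\bf A}_{P,J}$; one has to ensure via the spectral gap that the chosen $\lambda$ really lies in such a gap from below, which is automatic because the spectrum of ${\bf A}_{P,J}$ is discrete when $P$ is nondegenerate, the case of interest here.
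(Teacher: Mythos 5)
The paper does not prove this statement: as it says immediately above, the theorem is quoted (with notation lightly adapted) from Richard Siefring's paper, where it appears as Theorem~2.2. The ``\qed'' after the statement marks it as a cited result, so there is no in-paper proof to compare your attempt against.

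That said, your sketch captures the broad strategy of the cited proof and of the Hofer--Wysocki--Zehnder single-curve asymptotics on which it builds: reduce the question to a single section $W$ solving an asymptotically autonomous linear Cauchy--Riemann-type equation whose constant-coefficient part is ${\bf A}_{P,J}$, prove a priori exponential decay of $\|W(s,\cdot)\|_{L^2(S^1)}$ via the second-derivative differential inequality in a spectral gap, extract the leading eigenvalue and eigenvector by projecting onto spectral subspaces and showing $e^{-\lambda s}W$ converges, then iterate to get the faster decay of the remainder. The one place your sketch glosses over a genuine difficulty --- one that occupies a substantial part of Siefring's paper --- is the step where you write ``subtracting them yields.'' The asymptotic representatives $U$ and $V$ are defined only after reparametrising the two half-cylinders by proper embeddings $\psi_{\tilde{u}}$, $\psi_{\tilde{v}}$ asymptotic to the identity; these reparametrisations differ between the two curves, so $U$ and $V$ do not solve the same nonlinear PDE in the same coordinates, and their naive difference does not immediately satisfy a linear equation with a well-controlled perturbation $B$. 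The whole content of the relative asymptotics result, and the reason it does not follow trivially from applying the single-curve asymptotics separately to $\tilde{u}$ and $\tilde{v}$, is precisely to show that the error introduced by the reparametrisation mismatch decays fast enough to be absorbed into $B$. Your sketch implicitly assumes this away. The rest of the plan, including the observation that the discreteness of the spectrum (nondegeneracy of $P$, which holds here since the binding orbits are elliptic by construction) supplies the needed spectral gap, is sound.
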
\qed

Our situation is less general than in \cite{Siefring}, so we will explain the notation in the context of this paper. The setup is a manifold $M$ with contact form $\lambda$ and contact structure $\xi=\ker\lambda$. Consider a periodic orbit $\bar{P}$ of the Reeb vector field $X_{\lambda}$ with period $T$, and we may assume here that $T$ is its minimal period. We introduce $P(t):=\bar{P}(Tt/2\pi)$ such that $P(0)=P(2\pi)$. If $J:\xi\rightarrow\xi$ is a $d\lambda$-compatible complex structure the set of all $\tilde{J}$-holomorphic half-cylinders 
\[
 \tilde{u}=(a,u):[R,\infty)\times S^1\rightarrow {\mathbb R}\times M\ ,\ S^1={\mathbb R}/2\pi{\mathbb Z}
\]
for which $|a(s,t)-Ts/2\pi|$ and $|u(s,t)-P(t)|$ decay at some exponential rate (in local coordinates near the orbit $P(S^1)$) is denoted by ${\mathcal M}(P,J)$. Note that it is assumed here that the domain $[R,\infty)\times S^1$ is endowed with the standard complex structure. A smooth map $U:[R,\infty)\times S^1\rightarrow P^{\ast}\xi$ for which $U(s,t)\in\xi_{P(t)}$ is called an asymptotic representative of $\tilde{u}$ if there is a proper embedding $\psi:[R,\infty)\times S^1\rightarrow{\mathbb R}\times S^1$ asymptotic to the identity so that
\[
 \tilde{u}(\psi(s,t))=(Ts/2\pi,\exp_{P(t)}U(s,t))\ \forall\ (s,t)\in[R,\infty)\times S^1
\]
($\exp$ is the exponential map corresponding to some metric on $M$, for example the one induced by $\lambda$ and $J$). Every $\tilde{u}\in{\mathcal M}(P,J)$ has an asymptotic representative (see \cite{Siefring}). The asymptotic operator ${\bf A}_{P,J}$ is defined as follows:
\[
( {\bf A}_{P,J}h)(t):=-\frac{T}{2\pi}\,J(P(t))\left.\left(\frac{d}{ds}\right|_{s=0}D\phi_{-s}(\phi_s(P(t)))h(\phi_s(P(t)))\,\right)
\]
where $\phi_s$ is the flow of the Reeb vector field, and where $h$ is a section in $P^{\ast}\xi\rightarrow S^1$. Because the Reeb flow preserves the splitting $TM={\mathbb R}\,X_{\lambda}\oplus \xi$ we have also $( {\bf A}_{P,J}h)(t)\in\xi_{P(t)}$.\\
We compute the asymptotic operator ${\bf A}_{P,J}$ for the binding orbit $$\bar{P}(t)=\left(\frac{t}{\gamma_1(0)},0,0\right)\in S^1\times {\mathbb R}^2.$$
Recall that the above periodic orbit has minimal period $T=2\pi\gamma_1(0)$. Using 
\[
 \phi_s(P(t))=\phi_s(t,0,0)=\left(t+\frac{s}{\gamma_1(0)},0,0\right),
\]
formula (\ref{lin-flow}) for the linearization of the Reeb flow with $h(t)=(0,\zeta(t),\eta(t))$ and the fact that $J(t,0,0)\frac{\partial}{\partial x}=\frac{\partial}{\partial y}$ and $J(t,0,0)\frac{\partial}{\partial y}=-\frac{\partial}{\partial x}$ we compute
\begin{eqnarray*}
 ({\bf A}_{P,J}h)(t) & = & -\gamma_1(0)J(t,0,0)\left(\frac{h'(t)}{\gamma_1(0)}+\left(\begin{array}{cc} 0 & \beta(0)\\ -\beta(0) & 0 \end{array}\right)\left(\begin{array}{c} \zeta(t)\\ \eta(t)\end{array}\right)\right)\\
& = & -J_0\,h'(t)-\gamma_1(0)\beta(0)\,h(t)\\
& = & -J_0\,h'(t)+\kappa\,h(t)
\end{eqnarray*}
where $J_0=\left(\begin{array}{cc}0 & -1\\ 1 & 0\end{array}\right)$ and $\kappa=\gamma_1''(0)/\gamma_2''(0)\in(-1,0)$. Hence $\lambda\in\sigma({\bf A}_{P,J})$ precisely if
\[
 h'(t)\,=\,(\lambda-\kappa)\,J_0\,h(t)\ \mbox{and}\ h(2\pi)=h(0),
\]
i.e. $\sigma({\bf A}_{P,J})=\{\kappa+l\,|\,l\in{\mathbb Z}\}$, and the largest negative eigenvalue is given by $\kappa$. The corresponding eigenspace consists of all constant vectors $h(t)\equiv const\in{\mathbb R}^2$. The eigenspace for the eigenvalues $\kappa+l$ consists of all
\[
 h(t)=e^{J_0l\,t}h_0\ \mbox{with}\ h_0\in{\mathbb R}^2.
\]

\begin{theorem}\label{compactness-result}
{\bf (Compactness)}\\
Let $\lambda$ be a contact form on $M$ which is a local model near the binding (of the Giroux leaf $v_0$), and let $J:\ker\lambda\rightarrow\ker\lambda$ be a $d\lambda$-compatible complex structure. Consider a smooth family of solutions $(S,j_{\tau},a_{\tau},u_{\tau},\gamma_{\tau},J)_{0\le\tau<\tau_0}$ to equation (\ref{eq1}) satisfying the following conditions:
\begin{itemize}
\item $u_{\tau}=\phi_{f_{\tau}}(v_0)$ where $v_0:\dot{S}\rightarrow M$ is a Giroux leaf as in proposition \ref{first-solution} and $f_{\tau}:S\rightarrow{\mathbb R}$ are suitable smooth functions.
\item For any $0\le\tau<\tau_0$ there is $\delta>0$ such that
\[
 u_{\tau}(\dot{S})\cap u_{\tau'}(\dot{S})\,=\,\emptyset\ \mbox{whenever}\ 0<|\tau-\tau'|<\delta.
\]
\item Assume that $u_0$ and $u_{\tau}$ never have identical images whenever $0<\tau<\tau_0$.
\end{itemize}
Then the functions $f_{\tau}$ converge uniformly with all derivatives to a smooth function $f_{\tau_0}:S\rightarrow {\mathbb R}$ as $\tau\nearrow \tau_0$. The harmonic 1--forms $\gamma_{\tau}$ also converge in $C^{\infty}(S)$ to a 1--form $\gamma_{\tau_0}$ which is harmonic with respect to the complex structure $j_{\tau_0}$ on $\dot{S}$ given by   
\[
j_{\tau_0}(z):=(\pil Tu_{\tau_0}(z))^{-1}\circ J(u_{\tau_0}(z))\circ \pil Tu_{\tau_0}(z),
\]
where $u_{\tau_0}:=\phi_{f_{\tau_0}}(v_0)$. Moreover, we can find a smooth function $a_{\tau_0}$ on $\dot{S}$ so that $(S,j_{\tau_0},a_{\tau_0},u_{\tau_0},\gamma_{\tau_0},J)$ solves the differential equation (\ref{eq1}).
\end{theorem}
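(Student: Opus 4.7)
The plan is to reduce the system \eqref{eq1} to a quasilinear elliptic equation for the pair $(f_\tau,\gamma_\tau)$, prove uniform $C^k$ bounds on this pair, pass to the limit via Arzel\`a--Ascoli, and finally recover $a_{\tau_0}$ by integration. Using the ansatz $u_\tau = \phi_{f_\tau}(v_0)$ and $a_\tau = a_0 + b_\tau$ exactly as in the proof of Theorem \ref{IFT}, the system \eqref{eq1} becomes a quasilinear equation
\[
\bar\partial_{j_{f_\tau}}(b_\tau + i f_\tau) \;=\; N(f_\tau) \;-\; \gamma_\tau \;-\; i(\gamma_\tau\circ j_{f_\tau}),
\]
with $j_{f_\tau}$ given by \eqref{implicit-eq1}, coupled with the harmonicity of $\gamma_\tau$ with respect to $j_{f_\tau}$.

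The crucial a priori bound is the $C^0$-bound on $f_\tau$. Here the non-intersection hypothesis is used as follows: if $f_{\tau_1}(z)=f_{\tau_2}(z)$ at some $z\in\dot S$ for nearby $\tau_1<\tau_2$, then $u_{\tau_1}$ and $u_{\tau_2}$ share the point $\phi_{f_{\tau_1}(z)}(v_0(z))$, contradicting the local foliation property. Normalizing $f_0\equiv 0$, this forces $\tau\mapsto f_\tau(z)$ to be strictly monotonic in $\tau$ for every $z$. Since $v_0$ parametrizes a Giroux leaf and the Reeb flow of $\lambda$ is transverse to the pages with uniformly bounded return time (bounded near the binding by the local model, where the flow is $C^0$-close to a rotation of period $2\pi\gamma_1(0)$), the monotonicity combined with the transversality prevents $f_\tau(z)$ from exceeding one full return time. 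This yields a uniform bound $|f_\tau|_{\infty}\le C$ for all $\tau\in[0,\tau_0)$.

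With $f_\tau$ bounded in $C^0$, the harmonic form $\gamma_\tau$ is controlled by its periods, because it lies in the finite-dimensional space $\mathcal H^1_{j_{f_\tau}}(S)\cong\R^{2g}$. From \eqref{eq1}, integrating around a fixed basis of loops in $\dot S$ gives $\int_L\gamma_\tau=\int_L u_\tau^\ast\lambda\circ j_{f_\tau}$, which is a continuous function of $f_\tau$ in the $C^1$-norm. Standard elliptic bootstrapping applied to the above $\bar\partial$-equation, together with the asymptotic decay analysis near the punctures provided by Siefring's Theorem \ref{relative-asymptotics} (the eigenvalue spectrum at the binding orbit was computed just above), promotes the $L^\infty$-bound to uniform $C^k$-bounds on $(f_\tau,\gamma_\tau)$ for every $k$. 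The most delicate point is the regularity near the punctures: the local model near the binding, in particular the condition $\kappa\le-\tfrac12$ from Definition \ref{local-model}, is exactly what guarantees that the asymptotic behaviour of the solutions is uniform in $\tau$ and compatible with the weighted Sobolev set-up of Theorem \ref{IFT}.

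Finally, Arzel\`a--Ascoli extracts a $C^\infty$-convergent subsequence $f_{\tau_n}\to f_{\tau_0}$, $\gamma_{\tau_n}\to\gamma_{\tau_0}$, and the monotonicity of $\tau\mapsto f_\tau(z)$ upgrades subsequential convergence to convergence of the whole family as $\tau\nearrow\tau_0$. Setting $u_{\tau_0}:=\phi_{f_{\tau_0}}(v_0)$, the first, third and fourth lines of \eqref{eq1} pass to the limit directly; $\gamma_{\tau_0}$ is harmonic with respect to the limit complex structure $j_{\tau_0}$ because this is a closed condition in $C^\infty$. The 1-form $u_{\tau_0}^\ast\lambda\circ j_{\tau_0}-\gamma_{\tau_0}$ is closed on $\dot S$ (as the $C^\infty$-limit of the exact forms $da_\tau$) and has vanishing periods along every loop in $\dot S$ (the periods are continuous in $\tau$ and vanish identically for $\tau<\tau_0$), so it admits a smooth primitive $a_{\tau_0}$ on $\dot S$. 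The main obstacle in this scheme is the uniform control near the binding, where the PDE becomes singular and where the fine structure of the local model in Definition \ref{local-model} (both the order condition on $A(r)$ and the spectral condition on $\kappa$) is indispensable.
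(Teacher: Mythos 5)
Your outline gets the overall shape right (reduce to a quasilinear $\bar\partial$-equation for $b_\tau+if_\tau$, obtain an a priori $C^0$ bound on $f_\tau$, bootstrap, pass to the limit), but there are three genuine gaps, and the first one is where essentially all the difficulty of the theorem lives.

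First, the $L^\infty$ bound on $f_\tau$. You argue: monotonicity of $\tau\mapsto f_\tau(z)$ follows from local non-intersection, and then ``transversality prevents $f_\tau(z)$ from exceeding one full return time.'' That last step is circular: if $f_\tau(z)$ reached the return time $T(z)$ you would get $u_\tau(z)\in u_0(\dot S)$, i.e.\ an intersection of $u_\tau$ with $u_0$ --- but the hypotheses only forbid intersections of $u_\tau$ with \emph{nearby} $u_{\tau'}$, not with $u_0$. One must actually \emph{prove} that $u_0(\dot S)\cap u_\tau(\dot S)=\emptyset$ for all $\tau\in(0,\tau_0)$. This is a non-trivial open-and-closed argument: openness of $\{\tau: u_\tau\ \text{hits}\ u_0\}$ uses positivity of intersections of $\tilde J$-holomorphic curves, and closedness requires showing that a sequence of intersection points cannot escape to the punctures. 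The paper rules out that escape with Siefring's relative asymptotic formula (Theorem \ref{relative-asymptotics}) for $U_\tau-U_0$, combined with a winding-number count along circles near the puncture; this is where the computation of the asymptotic spectrum $\sigma({\bf A}_{P,J})=\{\kappa+\ell\}$ and the hypothesis $\kappa\notin\mathbb Z$ are actually used. You cite Siefring's theorem only for the bootstrapping step, so you have misplaced the key ingredient.

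Second, your bound on $\gamma_\tau$ via periods $\int_L\gamma_\tau=\int_L u_\tau^{\ast}\lambda\circ j_{f_\tau}$ is described as ``a continuous function of $f_\tau$ in the $C^1$-norm.'' But a $C^1$-bound on $f_\tau$ is exactly what you do not yet have at that stage; you only have $L^\infty$. The paper instead derives an $L^2$ bound (Proposition \ref{L^2-bound-for-gamma}) by a Stokes-type computation
$\|\gamma_\tau\|^2_{L^2,j_{f_\tau}}=\bigl|\int_{\dot S}u_0^\ast\lambda\wedge\gamma_\tau\bigr|\le\|u_0^\ast\lambda\|_{L^2,j_{f_\tau}}\|\gamma_\tau\|_{L^2,j_{f_\tau}}$,
which needs only the $L^\infty$ bound on $f_\tau$ (hence on $j_{f_\tau}$).

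Third, ``standard elliptic bootstrapping'' is not available here. The limit $f_{\tau_0}$ obtained from the monotone $L^\infty$-bounded family is a priori only measurable, hence so is the limit complex structure $j_{\tau_0}$, and the $\bar\partial$-operator in the equation is $\bar\partial_{j_{f_\tau}}$, i.e.\ the principal symbol depends on the unknown. The paper straightens the complex structure using quasiconformal maps $\alpha_\tau$ solving a Beltrami equation with (in the limit) merely measurable coefficient $\mu_{\tau_0}$ (Ahlfors--Bers), proves a uniform $L^p$ gradient bound via a blow-up/bubbling argument followed by Liouville's theorem, and then iterates between the regularity theory for $\bar\partial$ and for the Beltrami equation in H\"older norms, improving the exponent $\alpha\to\alpha^2\to\alpha^4\to\cdots$ at each step. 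None of this is captured by the phrase ``standard elliptic bootstrapping'' applied directly to the original equation.

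In short, the proposal describes a plausible skeleton but omits the three mechanisms that make the compactness theorem work: the Siefring-asymptotics/winding-number argument that upgrades local to global non-intersection (and hence gives the $L^\infty$ bound), the Stokes-based $L^2$ bound on $\gamma_\tau$ that avoids needing $C^1$ control, and the Beltrami/quasiconformal machinery required because the limit complex structure is only measurable.
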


\begin{remark}
 We may assume without loss of generality that $v_0\equiv u_0$ and $f_0\equiv 0$. If $z\in\dot{S}$ we denote by $T(z)>0$ the positive return time of the point $u_0(z)$, i.e.
\[
 T(z):=\inf\{T>0\,|\,\phi_T(u_0(z))\in u_0(\dot{S})\}<+\infty.
\]
We claim that the return time $z\mapsto T(z)$ extends continously over the punctures of the surface, and therefore there is an upper bound
\[
T:=\sup_{z\in\dot{S}}T(z)<\infty
\]
Using (\ref{reebflow}) and (\ref{Giroux-leaves-near-binding}), we note that, asymptotically near the punctures, $\phi_T(u_0(s,t))\in S^1\times {\mathbb R}^2$ has the following structure:
\[
 \phi_T(u_0(s,t))  =  \big(t+\alpha(r(s))T\,,\,r(s)\exp\big[i(\alpha_0+\beta(r(s))T)\big]\big)
\]
where $r(s)$ is a strictly decreasing function, $\alpha_0$ is some constant, and $\alpha(r),\beta(r)$ are suitable functions for which the limits $\lim_{r\rightarrow 0}\beta(r)$ and $\lim_{r\rightarrow 0}\alpha(r)$ exist and are not zero. Hence, if $T=T(u_0(s,t))$ is the positive return time at the point $u_0(s,t)$, then 
\[
T(u_0(s,t))= \frac{2\pi}{|\beta(r(s))|},
\]
and therefore the limit for $s\rightarrow+\infty$ exists.
\end{remark}

The remainder of this section is devoted to the proof of theorem \ref{compactness-result}.
We recall that the functions $a_{\tau}$ and $f_{\tau}$ satisfy the Cauchy-Riemann type equation (\ref{IFT-eq1}) which is
\[
\bar{\partial}_{j_{\tau}}(a_{\tau}+if_{\tau})=u_0^{\ast}\lambda\circ j_{\tau}-i(u_0^{\ast}\lambda)-\gamma_{\tau}-i(\gamma_{\tau}\circ j_{\tau}),
\]
where the complex structure $j_{\tau}$ is given by (\ref{implicit-eq1}) or
\begin{eqnarray*}
 j_{\tau}(z)& = & (\pil Tu_0(z))^{-1}(T\phi_{f_{\tau}(z)}(u_0(z)))^{-1}\cdot  \\
 & & \cdot J(\phi_{f_{\tau}(z)}(u_0(z)))\,T\phi_{f_{\tau}(z)}(u_0(z))\pil Tu_0(z)
\end{eqnarray*}
and $\gamma_{\tau}$ is a closed 1--form on $S$ with $d(\gamma_{\tau}\circ j_{\tau})=0$.\\

 The following $L^{\infty}$-bound is the crucial ingredient for the compactness result: We claim that
\begin{equation}\label{Linfty-bound}
\sup_{0\le\tau<\tau_0}\|f_{\tau}\|_{L^{\infty}(\dot{S})}\le T.
\end{equation}

Restricting any of the solutions to a simply connected subset $U\subset \dot{S}$ we can write $\gamma_{\tau}=dh_{\tau}$ for a suitable function $h_{\tau}:U\rightarrow{\mathbb R}$, and the maps
\[
 \tilde{u}_{\tau}:U\rightarrow {\mathbb R}\times M\ ,\ \tilde{u}_{\tau}=(a_{\tau}+h_{\tau},u_{\tau})
\]
are $\tilde{J}$-holomorphic curves. If two such curves $\tilde{u}_{\tau}$ and $\tilde{u}_{\tau'}$ have an isolated intersection then the corresponding intersection number is positive (see \cite{MW} or \cite{AH}, \cite{McDS} for positivity of (self)intersections for holomorphic curves). We claim that
\[
 u_0(\dot{S})\cap u_{\tau}(\dot{S})\,=\,\emptyset\ \ \forall\ \ 0<\tau<\tau_0
\]
and not just for small $\tau$ as assumed. If we can show this then (\ref{Linfty-bound}) follows. Indeed, for any $z\in\dot{S}$ the function $\tau\mapsto f_{\tau}(z)$ is strictly increasing from $f_0(z)=0$, and equality $f_{\tau}(z)=T(z)$ would imply that $u_{\tau}(z)\in u_0(\dot{S})$. Arguing indirectly, we assume that the set
\[
{\mathcal O}:= \{\tau\in(0,\tau_0)\,|\,u_{\tau}(\dot{S})\cap u_0(\dot{S})\neq\emptyset\}
\]
is not empty. We denote its infimum by $\tilde{\tau}$ which must be a positive number since $u_{\tau}(\dot{S})\cap u_0(\dot{S})=\emptyset$ for all sufficiently small $\tau>0$.\\
We will first prove that the above set is open which implies that $u_{\tilde{\tau}}$ and $u_0$ can not intersect. If $u_{\tau}(p)=u_0(q)$ for suitable points $p,q\in\dot{S}$ then we consider locally near these points the corresponding holomorphic curves $\tilde{u}_{\tau}$ and $\tilde{u}_0$. Adding some constant to the ${\mathbb R}$-component of one of them we may assume that $\tilde{u}_{\tau}(p)=\tilde{u}_0(q)$. If this intersection point is not isolated then $p$ and $q$ have open neighborhoods $U$ and $V$ respectively on which the holomorphic curves $\tilde{u}_{\tau}$ and $\tilde{u}_0$ agree. This implies that the set of all points $p\in\dot{S}$ such that $\tilde{u}_{\tau}(p)$ is a non-isolated intersection point between $\tilde{u}_{\tau}$ and $\tilde{u}_0$, is open and closed, i.e. it is either empty or all of $\dot{S}$. Since we assumed that each set $u_{\tau}(\dot{S})$, $\tau>0$ is different from $u_0(\dot{S})$ we conclude that if $u_{\tau}$ and $u_0$ intersect then the intersection point of the corresponding holomorphic curves $\tilde{u}_{\tau}$ and $\tilde{u}_0$ must be isolated.
But this implies on the other hand that $u_{\tau'}$ and $u_0$ would also intersect for all $\tau'$ sufficiently close to $\tau$ by positivity of the intersection number showing that the set ${\mathcal O}$ is open.\\
We conclude from the above that there are a sequence $\tau_k\searrow\tilde{\tau}$ and points $p_k,q_k\in\dot{S}$ such that $u_{\tau_k}(p_k)=u_0(q_k)$. Passing to a suitable subsequence we may assume convergence of the sequences $(p_k)_{k\in{\mathbb N}}$ and $(q_k)_{k\in{\mathbb N}}$ to points $p,q\in S$. Because of $u_{\tilde{\tau}}(\dot{S})\cap u_0(\dot{S})=\emptyset$ the points $p,q$ must be punctures, and they have to be equal $z_0=p=q\in S\backslash\dot{S}$. The reason for this is the following: The maps $u_{\tau_k}, u_0$ are asymptotic near the punctures to a disjoint union of finitely many periodic Reeb orbits which are not iterates of other periodic orbits. Also, different punctures always correspond to different periodic orbits. This follows from E. Giroux's result and our constructions in section \ref{existence} of this paper.\\ 
We now derive a contradiction using Richard Siefring's result. The harmonic forms $\gamma_{\tau_k}$ in equation (\ref{eq1}) are defined on all of $S$, hence they are exact on some open neighborhood $U$ of the puncture $z_0$ and $\gamma_{\tau_k}=dh_{\tau_k}$ for suitable functions $h_{\tau_k}$ on $U$ and similarly $\gamma_{\tilde{\tau}}=dh_{\tilde{\tau}}$. We may also assume that $j_{\tilde{\tau}}|_U=j_{\tau_k}|_U=j_0$ after changing local coordinates near $z_0$. Then, on the set $U$, the maps $\tilde{u}_{\tau_k}=(a_{\tau_k}+h_{\tau_k},u_{\tau_k})$ and $\tilde{u}_0=(a_0+h_0,u_0)$ are holomorphic curves with $\tilde{u}_{\tau_k}(p_k)=\tilde{u}_0(q_k)$ while the images of $\tilde{u}_{\tilde{\tau}}$ and $\tilde{u}_0$ have empty intersection. Let now 
\[
 U_{\tilde{\tau}}\,,\,U_{\tau_k}\,,\,U_0:[R,\infty)\times S^1\rightarrow {\mathbb R}^2
\]
be asymptotic representatives of the holomorphic curves $\tilde{u}_{\tilde{\tau}}\,,\,\tilde{u}_{\tau_k}\,,\,\tilde{u}_0$ respectively. Invoking theorem \ref{relative-asymptotics} and our subsequent computation of the asymptotic operator and its spectrum we obtain the following asymptotic formulas
\begin{equation}\label{as-formula}
 U_{\tau}(s,t)-U_0(s,t)\,=\,e^{\lambda_{\tau}\,s}(e_{\tau}(t)+r_{\tau}(s,t))\ ,\ \tau=\tilde{\tau},\tau_k\,,\,\,s\ge R_{\tau}
\end{equation}
where $R_{\tau}>0$ is some constant, $\lambda_{\tau}< 0$ is some negative eigenvalue of the asymptotic operator ${\bf A}_{P,J}$. It is of the form $\lambda_{\tau}=\kappa+l_{\tau}$ where $l_{\tau}$ is an integer, $\kappa=\gamma_1''(0)/\gamma_2''(0)$ is not an integer, and where $e_{\tau}(t)=e^{J_0l_{\tau}t}h_{\tau}$, $h_{\tau}\in{\mathbb R}^2\backslash\{0\}$ is an eigenvector corresponding to the eigenvalue $\lambda_{\tau}=\kappa+l_{\tau}$. Note that the above formula applies since $U_{\tau}-U_0$ can not vanish identically. We will actually show that $l_{\tau}\equiv 0$. The asymptotic representative $U_0$ is given by
\[
u_0(s,t)=(t,r(s)e^{i\alpha_0})=(t, U_0(s,t)),
\]
using equation (\ref{Giroux-leaves-near-binding}), and we recall that $r(s)=c(s)e^{\kappa s}$ where $c(s)\rightarrow c_{\infty}>0$ as $s\rightarrow+\infty$. An asymptotic representative of $\tilde{u}_{\tau}$ is however given by an expression such as
\[
 u_{\tau}(\psi(s,t))=(t,U_{\tau}(s,t))
\]
where $\psi:[R,\infty)\times S^1\rightarrow{\mathbb R}\times S^1$ is a proper embedding converging to the identity map as $s\rightarrow+\infty$. Writing $(s',t')=\psi(s,t)$ we get using formula equations (\ref{reebflow}) for the Reeb flow
\begin{eqnarray*}
 U_{\tau}(s,t) & = & c(s')e^{\kappa s'}e^{i(\alpha_0+\beta(r(s'))f_{\tau}(s',t'))}\\
& = & e^{\kappa s}(e_{\tau}+r_{\tau}(s,t)).
\end{eqnarray*}
The asymptotic formula for $U_{\tau}$ apriori allows for other decay rates but $\kappa$ is the only possible one. Dividing by $e^{\kappa s}$ and passing to the limit $s\rightarrow+\infty$ we obtain
\[
 e_{\tau}=c_{\infty}e^{i\alpha_0}e^{i\beta(0)f_{\tau}(\infty)}
\]
where $f_{\tau}(\infty)=\lim_{s\rightarrow+\infty}f_{\tau}(s,t)$ which is independent of $t$ since $f_{\tau}$ extends continuously over the punctures. Hence the difference $U_{\tau}-U_0$ has decay rate $\lambda_{\tau}\equiv\kappa$ as claimed unless the two eigenvectors $e_{\tau}$ and $e_0$ agree which is equivalent to 
\[
 f_{\tau}(\infty)\in \frac{2\pi}{\beta(0)}\,{\mathbb Z}
\]
or $\tau=\tilde{\tau}$ in our case. The maps $U_{\tau}-U_0$ satisfy a Cauchy-Riemann type equation to which the Similarity Principle applies so that for every zero $(s,t)$ of $U_{\tau}-U_0$ the map $\sigma\mapsto (U_{\tau}-U_0)(s+\epsilon\cos \sigma,t+\epsilon\sin\sigma)$ has positive degree for small $\epsilon>0$. The Cauchy-Riemann type equation mentioned above is derived in \cite{Siefring} in section 5.3 as well as in section 3 of \cite{Abbas-2} in a slightly different context and also in \cite{HWZ-3}.
If $R$ is sufficiently large then the map
\[
S^1\rightarrow S^1\ ,\ t\mapsto W_{\tau}(R,t):=\frac{U_{\tau}-U_0}{|U_{\tau}-U_0|}(R,t) 
\]
is well-defined, and it has degree $l_{\tau}$ because the remainder term $r_{\tau}(s,t)$ decays exponentially in $s$. Zeros of $U_{\tau}-U_0$ contribute in the following way: If $R'<R$ such that $(U_{\tau}-U_0)(R',t)\neq 0$ then
\begin{equation}\label{zero-formula}
 \mbox{deg}W_{\tau}(R,\cdot)\,=\,\mbox{deg}W_{\tau}(R',\cdot)+\sum_{\{z|U_{\tau}(z)-U_0(z)=0\}}o(z).
\end{equation}
We know already that $l_{\tau}=0$ whenever $\tau\neq \tilde{\tau}$. Arguing indirectly, we assume that $l_{\tilde{\tau}}$ is not zero. It would have to be negative then. Choose then $R'>0$ so large that $\mbox{deg}W_{\tilde{\tau}}(R',\cdot)=l_{\tilde{\tau}}<0$. For $\tau$ sufficiently close to $\tilde{\tau}$ we also have $\mbox{deg}W_{\tau}(R',\cdot)=l_{\tilde{\tau}}$. On the other hand we have $\mbox{deg}W_{\tau}(R,\cdot)=0$ for $R>R'$ sufficiently large. Equation (\ref{zero-formula}) implies that the map $U_{\tau}-U_0$ must have zeros in $[R',R]\times S^1$ to account for the difference in degrees, but we know that there are none for $\tau<\tilde{\tau}$. This contradiction shows that $l_{\tilde{\tau}}\neq 0$ is impossible. Choose again $R'>0$ so large that $\mbox{deg}W_{\tilde{\tau}}(R',\cdot)=0$. The degree does not change if we slightly alter $\tau$. In particular, we have $\mbox{deg}W_{\tau}(R',\cdot)=0$ for $\tau>\tilde{\tau}$ close to $\tilde{\tau}$ as well. For $R>>R'$ we have $\mbox{deg}W_{\tilde{\tau}}(R,\cdot)=0$, and we recall that
\[
(U_{\tau_k}-U_0)(s_k,t_k)=0\ ,\ \tau_k\searrow\tilde{\tau}
\]
for a suitable sequence $(s_k,t_k)$ with $s_k\rightarrow+\infty$, and the set of zeros of $U_{\tau_k}-U_0$ is discrete. This however contradicts equation (\ref{zero-formula}) since the zeros have positive orders.\\
Summarizing, we have shown that the assumption ${\mathcal O}\neq\emptyset$ leads to a contradiction which implies the apriori bound (\ref{Linfty-bound}).\\
The monotonicity of the functions $f_{\tau}$ in $\tau$ and the bound (\ref{Linfty-bound}) imply that the functions $f_{\tau}$ converge pointwise to a measureable function $f_{\tau_0}$ as $\tau\nearrow {\tau_0}$. We also know that $\|f_{\tau_0}\|_{L^{\infty}(\dot{S})}\le T$. We then obtain a complex structure $j_{\tau_0}$ on $\dot{S}$ by
\begin{eqnarray*}
 j_{\tau_0}(z)& = & (\pil Tu_0(z))^{-1}(T\phi_{f_{\tau_0}(z)}(u_0(z)))^{-1}\cdot  \\
 & & \cdot J(\phi_{f_{\tau_0}(z)}(u_0(z)))\,T\phi_{f_{\tau_0}(z)}(u_0(z))\pil Tu_0(z).
\end{eqnarray*}
By definition the complex structure $j_{\tau_0}$ is also of class $L^{\infty}$ and $j_{\tau}(z)\rightarrow j_1(z)$ pointwise. Our task is to improve the regularity of the limit $f_{\tau_0}$ and the character of the convergence $f_{\tau}\rightarrow f_{\tau_0}$. We also have to establish convergence of the functions $a_{\tau}$ for $\tau\nearrow {\tau_0}$. The complex structures $j_{\tau}$ are of course all smooth, but the limit $j_{\tau_0}$ might only be measureable.

\subsection{The Beltrami equation}

For the convenience of the reader we briefly summarize a few classical facts from the theory of quasiconformal mappings (see the paper by L. Ahlfors and L. Bers \cite{Ahlforsbook}, \cite{Bers-paper}).\\
The punctured surface $\dot{S}$ carries metrics $g_{\tau}$, also of class $L^{\infty}$ for $\tau=\tau_0$ and smooth otherwise, so that 
\[
g_{\tau}(z)(j_{\tau}(z)v,j_{\tau}(z)w)=g_{\tau}(z)(v,w)\ \mbox{ for all}\ v,w\in T_z\dot{S}.
\]
In fact, $g_{\tau}$ is given by
\[
g_{\tau}(z)(v,w)=d\lambda(u_{\tau}(z))\Big(\pil Tu_{\tau}(z)v,J(u_{\tau}(z))\pil Tu_{\tau}(z)w\Big).
\]
In the case $\tau=\tau_0$ we replace $\pil Tu_{\tau}(z)$ by $T\phi_{f_{\tau_0}(z)}(u_0(z))\pil Tu_0(z)$. We have $\sup_{\tau}\|g_{\tau}\|_{L^{\infty}(\dot{S})}<\infty$ and $g_{\tau}\rightarrow g_{\tau_0}$ pointwise as $\tau\nearrow {\tau_0}$. Our considerations about the regularity of the limit are of local nature, so we may replace $\dot{S}$ with a ball $B\subset {\C}$ centered at the origin. Denoting the metric tensor of $g_{\tau}$ by $(g^{\tau}_{kl})_{1\le k,l\le 2}$ we define the following complex--valued smooth functions:
\[
\mu_{\tau}(z):=\frac{\frac{1}{2}(g^{\tau}_{11}(z)-g^{\tau}_{22}(z))+i\,g^{\tau}_{12}(z)}{\frac{1}{2}(g^{\tau}_{11}(z)+g^{\tau}_{22}(z))+\sqrt{g^{\tau}_{11}(z)g^{\tau}_{22}(z)-(g^{\tau}_{12}(z))^2}},
\]
and we note that
\[
\sup_{\tau}\|\mu_{\tau}\|_{L^{\infty}(\dot{S})}<1
\]
and that $\mu_{\tau}\rightarrow\mu_{\tau_0}$ pointwise. We view the functions $\mu_{\tau}$ as functions on the whole complex plane by trivially extending them beyond $B$. Then they are also $\tau$--uniformly bounded in $L^p({\mathbb C})$ for all $1\le p\le\infty$ and $\mu_{\tau}\rightarrow\mu_{\tau_0}$ in $L^p({\mathbb C})$ for $1\le p<\infty$ by Lebesgue's theorem. If we solve now the Beltrami equation
\[
\overline{\partial}\alpha_{\tau}=\mu_{\tau}\,\partial\alpha_{\tau}
\]
for $\tau<{\tau_0}$ so that $\partial\alpha_{\tau}(0)\neq 0$ then $\alpha_{\tau}$ is a diffeomorphism of the plane onto itself so that
\[
g_{\tau}(\alpha_{\tau}(z))(T\alpha_{\tau}(z)v,T\alpha_{\tau}(z)w)=\lambda_{\tau}\langle v,w\rangle\ \mbox{if}\ z\in B,
\]
where $\langle\,.\,,\,.\,\rangle$ denotes the standard Euclidean scalar product on ${\mathbb R}^2$ and $\lambda_{\tau}$ is a positive function. We then get
\[
T\alpha_{\tau}(z)\circ \,i\,=\,j_{\tau}(\alpha_{\tau}(z))\circ T\alpha_{\tau}(z)\ ,\ 0\le\tau < 1\ \mbox{if}\ z\in B.
\]
For H\"{o}lder continuous $\mu_{\tau}$ the map $\alpha_{\tau}$ exists, and it is a $C^1$--diffeomorphism. This is a classical result by A. Korn and L. Lichtenstein \cite{Lichtenstein}. More modern proofs may be found for example in \cite{Bers-lecture} and \cite{Chern}. In our case we have smooth solutions $\alpha_{\tau}$ belonging to smooth $\mu_{\tau}$, but we only know that the $\mu_{\tau}$ converge pointwise as $\tau\nearrow 1$. On the other hand, we would like to derive a decent notion of convergence for the transformations $\alpha_{\tau}$. An interesting case for us is the one where $\mu$ is only a measureable function. Results in this direction were obtained by C. B. Morrey \cite{Morrey}, L. Ahlfors and L. Bers \cite{Bers-paper} and also by L. Bers and L. Nirenberg \cite{Bers-Nirenberg}. We also refer to the book \cite{Ahlforsbook} by Lars Ahlfors. We summarize now a few results from the paper \cite{Bers-paper} about the Beltrami equation for measureable $\mu$ which we will need later on. The first result concerns the inhomogeneous Beltrami equation
\[
\overline{\partial}u=\mu\,\partial\,u+\sigma,
\]
where $u:{\mathbb C}\rightarrow {\Complex}$, $\mu$ is a complex--valued measureable function on $\Complex$ with
\[
\|\mu\|_{L^{\infty}(\Complex)}<1
\]
and $\sigma\in L^p(\Complex)$ for a suitable $p>2$ (we will explain shortly what values for $p$ are admissible). We consider the following operators acting on smooth functions with compact support in the plane:
\[
(Ag)(z):=\frac{1}{2\pi i}\int_{\Complex}g(\xi)\left(\frac{1}{\xi-z}-\frac{1}{\xi}\right)d\xi\,d\overline{\xi},
\]
\[
(\Gamma g)(z):=\frac{1}{2\pi i}\lim_{\ve\searrow 0}\int_{\Complex\backslash B_{\ve}(0)}\frac{g(\xi)-g(z)}{(\xi-z)^2}d\xi\,d\overline{\xi}.
\]
Both operators can be extended to continuous operators $L^p(\Complex)\rightarrow L^p(\Complex)$ for all $2<p<\infty$. They have the following properties:
\begin{enumerate}
\item $\overline{\partial}(Ag)=A(\overline{\partial} g)=g,$
\item $\partial(A g)=A(\partial g)=\Gamma g,$
\item $|Ag(z_1)-Ag(z_2)|\le\,C_p\|g\|_{L^p(\Complex)}|z_2-z_1|^{1-\frac{2}{p}}$,
\item $\|\Gamma g\|_{L^p(\Complex)}\le \,c_p\,\|g\|_{L^p(\Complex)}$ with $c_p\rightarrow 1$ as $p\rightarrow 2$.
\end{enumerate}
We used here the notation $\overline{\partial}=\frac{1}{2}(\partial_s+i\partial_t)$ and $\partial=\frac{1}{2}(\partial_s-i\partial_t)$. Properties (1) and (2) above should be understood in the sense of distributions. The proof of (4) involves the Calder\'{o}n--Zygmund inequality and the Riesz--Thorin convexity theorem (see \cite{Lax} and \cite{Stein}). Following \cite{Bers-paper} we define $B_p$ with $p>2$, to be the space of all locally integrable functions on the plane which have weak derivatives in $L^p({\C})$, vanish in the origin and which satisfy a global H\"{o}lder condition with exponent $1-\frac{2}{p}$. For $u\in B_p$ we then define a norm by
\[
\|u\|_{B_p}:=\sup_{z_1\neq z_2}\frac{|u(z_2)-u(z_1)|}{|z_2-z_1|^{1-\frac{2}{p}}}+\|\partial u\|_{L^p(\Complex)}+\|\overline{\partial}u\|_{L^p(\Complex)},
\]
so that $B_p$ becomes a Banach space. We will usually choose $p>2$ such that $c_p\,\sup_{\tau}\|\mu_{\tau}\|_{L^{\infty}(\Complex)}<1$ where $c_p$ is the constant from item (4) above.

\begin{theorem}{\bf (see \cite{Bers-paper}, theorem 1)}\label{inhomogeneous-Beltrami}\\
Assume that $p>2$ such that $c_p\,\sup_{\tau}\|\mu_{\tau}\|_{L^{\infty}(\Complex)}<1$. If $\sigma\in L^p(\Complex)$ then the equation
\[
\overline{\partial}u=\mu\,\partial u+\sigma
\]
has a unique solution $u=u_{\mu,\sigma}\in B_p$.
\end{theorem}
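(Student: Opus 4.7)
The plan is to recast the inhomogeneous Beltrami equation as a fixed-point problem in $L^p(\Complex)$ for the unknown $g := \overline{\partial} u$. Formally, if $u$ solves $\overline{\partial}u = \mu\,\partial u + \sigma$ and if we insist that $u = Ag$ (which is natural because property (1) gives $\overline{\partial}(Ag) = g$ and property (2) gives $\partial(Ag) = \Gamma g$), then substituting produces the equation
\[
g \;=\; \mu\,\Gamma g + \sigma
\]
on $L^p(\Complex)$. The map $T:L^p(\Complex)\to L^p(\Complex)$, $Tg := \mu\,\Gamma g + \sigma$, is affine with linear part $g\mapsto \mu\,\Gamma g$, and by property (4) together with $\|\mu\|_{L^\infty}\le \sup_\tau\|\mu_\tau\|_{L^\infty}$,
\[
\|Tg_1 - Tg_2\|_{L^p} \;\le\; \|\mu\|_{L^\infty}\,c_p\,\|g_1-g_2\|_{L^p}.
\]
By hypothesis $c_p\|\mu\|_{L^\infty}<1$, so $T$ is a strict contraction. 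Banach's fixed-point theorem then yields a unique $g\in L^p(\Complex)$ with $Tg = g$.

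Having produced $g$, I set $u := Ag$ and verify $u\in B_p$ directly from the listed properties of $A$ and $\Gamma$: property (1) gives $\overline{\partial}u = g \in L^p$; property (2) gives $\partial u = \Gamma g \in L^p$; property (3) gives the global Hölder estimate with exponent $1-\tfrac{2}{p}$; and $u(0)=Ag(0)=0$ is built into the definition of $A$. Combining these and using $g = \mu\,\Gamma g + \sigma$,
\[
\overline{\partial}u \;=\; g \;=\; \mu\,\Gamma g + \sigma \;=\; \mu\,\partial u + \sigma,
\]
so $u\in B_p$ is indeed a solution.

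For uniqueness, let $u_1,u_2\in B_p$ be two solutions and set $v := u_1 - u_2\in B_p$; then $\overline{\partial}v = \mu\,\partial v$ in the distributional sense. Let $h := \overline{\partial}v\in L^p(\Complex)$. Since $v\in B_p$, properties (1)--(2) imply that $v$ and $Ah$ have the same weak $\overline{\partial}$ and the same weak $\partial$ derivatives (the latter equal to $\Gamma h$), so $v - Ah$ is an entire function belonging to $B_p$; the Hölder bound with exponent $<1$ forces it to be a constant, and the normalization at $0$ forces this constant to vanish, giving $v = Ah$ and hence $\partial v = \Gamma h$. Substituting back, $h = \mu\,\Gamma h$, i.e.\ $h$ is a fixed point of the contraction $g\mapsto \mu\,\Gamma g$, so $h = 0$. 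Thus $\overline{\partial}v = \partial v = 0$, and $v$ is constant, hence identically zero.

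The only delicate point I foresee is the distributional manipulation in the uniqueness argument — specifically, justifying that elements of $B_p$ whose $\overline{\partial}$ vanishes are necessarily constant. This rests on Weyl's lemma (such a $v$ is holomorphic) together with the fact that an entire function with globally Hölder growth of exponent $1-\tfrac{2}{p}<1$ and vanishing at the origin must vanish identically, a mild Liouville-type statement. Everything else is a routine application of the Banach fixed-point theorem combined with properties (1)--(4) of the integral operators $A$ and $\Gamma$.
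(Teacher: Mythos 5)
Your proof is correct and follows essentially the same route as the paper: both set up a Banach contraction on $L^p(\Complex)$ using property (4) and recover $u$ via the operator $A$. The only cosmetic difference is that you iterate on $g=\overline{\partial}u$ (solving $g=\mu\,\Gamma g+\sigma$) whereas the paper iterates on $q=\partial u$ (solving $q=\Gamma(\mu q+\sigma)$); the two are related by $g=\mu q+\sigma$ and $q=\Gamma g$, so the arguments are equivalent.
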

For the existence part of the theorem, one first solves the following fixed point problem in $L^p(\Complex)$
\[
q\,=\,\Gamma(\mu\,q)+\Gamma\sigma.
\]
This is possible because the map 
\[
L^p({\C})\longrightarrow L^p({\C})
\]
\[
q\longmapsto \Gamma(\mu q+\sigma)
\]
is a contraction in view of $c_p\|\mu\|_{L^{\infty}(\Complex)}<1$. Then
\[
u:=A(\mu\,q+\sigma)
\]
is the desired solution. The following estimate is also derived in the paper \cite{Bers-paper}
\begin{equation}\label{5.1.2.}
\|q\|_{L^p(\Complex)}\le\,c'_p\|\sigma\|_{L^p(\Complex)},
\end{equation}
with $c'_p=c_p/(1-c_p\|\mu\|_{L^{\infty}(\Complex)})$ which follows from
\begin{eqnarray*}
\|q\|_{L^p({\C})} & \le & \|\Gamma(\mu q)\|_{L^p({\C})}+\|\Gamma \sigma\|_{L^p({\C})}\\
 & \le & c_p\|\mu\|_{L^{\infty}({\C})}\|q\|_{L^p({\C})}+c_p\|\sigma\|_{L^p({\C})}.
\end{eqnarray*}

Recalling our original situation we have the following result which shows that there is some sort of conformal mapping for $j_1$ on the ball $B$.
\begin{theorem}{\bf (see \cite{Bers-paper}, theorem 4)}\label{thm-5.3}\\
Let $\mu:{\C}\rightarrow {\C}$ be an essentially bounded measureable function with $\mu|_{{\C}\backslash B}\equiv 0$ and $p>2$ such that $c_p\|\mu\|_{L^{\infty}({\C})}<1$. Then there is a unique map $\alpha:\Complex\rightarrow\Complex$ with $\alpha(0)=0$ such that
\[
\overline{\partial}\alpha=\mu\partial\alpha
\]
in the sense of distributions with $\partial\alpha-1\in L^p(\Complex)$.
\end{theorem}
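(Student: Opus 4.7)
The plan is to reduce the existence statement to Theorem \ref{inhomogeneous-Beltrami} via the ansatz $\alpha(z)=z+u(z)$ with $u\in B_p$ to be determined. Under this substitution, the equation $\bar{\partial}\alpha=\mu\,\partial\alpha$ is equivalent to the inhomogeneous Beltrami equation
\[
\bar{\partial}u=\mu\,\partial u+\mu,
\]
i.e.\ the forcing term is $\sigma=\mu$. Since $\mu$ is essentially bounded and supported in $B$, it lies in $L^p(\Complex)$ for every $p>2$, so Theorem \ref{inhomogeneous-Beltrami} will supply a unique $u\in B_p$ solving this equation. Then $\alpha:=z+u$ satisfies $\alpha(0)=0$ (since every element of $B_p$ vanishes at the origin) and $\partial\alpha-1=\partial u\in L^p(\Complex)$ by definition of $B_p$. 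This would settle existence.

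For uniqueness I would take two candidate solutions $\alpha_1,\alpha_2$ and form $v:=\alpha_1-\alpha_2$, which satisfies $v(0)=0$, $\partial v\in L^p(\Complex)$, and $\bar{\partial}v=\mu\,\partial v\in L^p(\Complex)$ (the latter because $\mu\in L^{\infty}$). The heart of the argument is to upgrade the regularity so that $v\in B_p$. For this I would introduce $w:=v-A(\bar{\partial}v)$ and use property (1) of $A$ to deduce $\bar{\partial}w=0$ in the distributional sense, so that $w$ is an entire holomorphic function by Weyl's lemma. Property (2) then gives $\partial w=\partial v-\Gamma(\bar{\partial}v)\in L^p(\Complex)$. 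An entire holomorphic function with $L^p(\Complex)$-derivative for finite $p$ must be constant, because the sub-mean-value inequality over a disc of radius $R$ yields
\[
|w'(z)|^p\le\frac{1}{\pi R^2}\int_{B_R(z)}|w'|^p\,dA\le\frac{\|w'\|^p_{L^p(\Complex)}}{\pi R^2}\longrightarrow 0
\]
as $R\to\infty$. Evaluating at the origin and using $v(0)=0$ together with $A(\bar{\partial}v)(0)=0$ forces $w\equiv 0$, hence $v=A(\bar{\partial}v)=A(\mu\,\partial v)\in B_p$.

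Once this regularity is in hand, $v\in B_p$ solves the inhomogeneous Beltrami equation with $\sigma\equiv 0$, and the uniqueness clause of Theorem \ref{inhomogeneous-Beltrami} immediately gives $v\equiv 0$. The main obstacle throughout is precisely this regularity upgrade, since a priori $v$ is only a distribution with weak derivatives in $L^p$; the combination of Weyl's lemma, the Liouville-type bound above, and the mapping properties of the operators $A$ and $\Gamma$ is what closes the argument. Given this step, everything else is automatic bookkeeping.
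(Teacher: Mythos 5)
Your existence argument is exactly the one the paper gives: the substitution $\alpha(z)=z+u(z)$ converts $\overline{\partial}\alpha=\mu\,\partial\alpha$ into $\overline{\partial}u=\mu\,\partial u+\mu$, and Theorem \ref{inhomogeneous-Beltrami} produces $u\in B_p$; the conditions $\alpha(0)=0$ and $\partial\alpha-1\in L^p$ then hold by definition of $B_p$.

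Where you go beyond the paper is on uniqueness, which the paper simply defers to the citation of Ahlfors--Bers. Your supplement is correct: setting $v=\alpha_1-\alpha_2$ gives $\partial v\in L^p$, $\overline{\partial}v=\mu\,\partial v\in L^p$, and $v(0)=0$. Subtracting $A(\overline{\partial}v)$ produces an entire function $w$ whose derivative $\partial v-\Gamma(\overline{\partial}v)$ lies in $L^p$; subharmonicity of $|w'|^p$ and the sub-mean-value bound over balls of radius $R\to\infty$ force $w'\equiv 0$, and $w(0)=0$ (using $Ag(0)=0$) forces $w\equiv 0$, so $v=A(\mu\,\partial v)\in B_p$ by the Hölder estimate (3) for $A$. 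Uniqueness in Theorem \ref{inhomogeneous-Beltrami} with $\sigma=0$ then gives $v\equiv 0$. This is a clean, self-contained way to close the gap that the paper leaves implicit, and it correctly identifies the regularity upgrade as the only nontrivial step.
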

The desired map $\alpha$ is given by
\[
\alpha(z)\,=\,z+u(z).
\]
where $u\in B_p$ solves the equation $\overline{\partial}u=\mu\partial u+\mu$. In particular, $\alpha\in W^{1,p}(B)$. Lemma 8 in \cite{Bers-paper} states that $\alpha:\Complex\rightarrow\Complex$ is a homeomorphism. We can apply the theorem to all the $\mu_{\tau}$, $0<\tau\le 1$ and obtain smooth $\mu_{\tau}$--conformal mappings $\alpha_{\tau}$ together with the associated maps $u_{\tau}$. 
\begin{lemma}\label{lemma-6.4}
Let $\mu_n:{\C}\rightarrow{\C}$ be a sequence of measureable functions so that $\mu_n|_{{\C}\backslash B}\equiv 0$ and $\sup_n\|\mu_n\|_{L^{\infty}({\C})}<1$. Assume that $\mu_n\rightarrow\mu$ pointwise almost everywhere. Then the corresponding quasiconformal mappings $\alpha_n,\alpha$ as in theorem \ref{thm-5.3} satisfy
\[
\|\alpha_n-\alpha\|_{W^{1,p}(B)}\longrightarrow 0
\]
as $n\rightarrow\infty$, for any $p>2$ such that $c_p\,\sup_n\|\mu_n\|_{L^{\infty}({\C})}<1$ and any compact set $B\subset {\mathbb C}$.
\end{lemma}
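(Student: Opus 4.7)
The plan is to reduce everything to convergence of the auxiliary $L^p$-function $q$ produced by the fixed point argument behind theorem \ref{thm-5.3}. Fix $p>2$ with $c_p\sup_n\|\mu_n\|_{L^\infty(\C)}<1$. Following the construction recalled after theorem \ref{inhomogeneous-Beltrami}, write $\alpha_n(z)=z+u_n(z)$ with $u_n=A(\mu_n q_n+\mu_n)$ and $q_n=\Gamma(\mu_n q_n)+\Gamma\mu_n$, and likewise $\alpha=z+u$, $u=A(\mu q+\mu)$, $q=\Gamma(\mu q+\mu)$. Since $\partial u_n=\Gamma(\mu_n q_n+\mu_n)=q_n$ and $\bar\partial u_n=\mu_n q_n+\mu_n$ (with the analogous identities for $u$), convergence $\alpha_n\to\alpha$ in $W^{1,p}(B)$ will follow once $q_n\to q$ in $L^p(\C)$ is established, together with an $L^p(B)$ bound obtained from property (3) of the operator $A$.

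The first main step is to show that $\mu_n\to\mu$ in $L^p(\C)$. This is where I would use that $\mu_n$ is supported in $B$ and uniformly bounded in modulus by some constant $<1$: with the dominating function $2\chi_B\in L^p(\C)$, Lebesgue's dominated convergence theorem gives $\|\mu_n-\mu\|_{L^p(\C)}\to 0$. The same dominated convergence argument, using $|(\mu_n-\mu)q|\le 2|q|\in L^p(\C)$, yields $\|(\mu_n-\mu)q\|_{L^p(\C)}\to 0$.

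Next I would prove $q_n\to q$ in $L^p(\C)$. Subtracting the two fixed-point identities and writing $\mu_n q_n-\mu q=\mu_n(q_n-q)+(\mu_n-\mu)q$ gives
\[
q_n-q=\Gamma\bigl(\mu_n(q_n-q)\bigr)+\Gamma\bigl((\mu_n-\mu)q\bigr)+\Gamma(\mu_n-\mu).
\]
Applying property (4) of $\Gamma$ and absorbing the term $c_p\|\mu_n\|_{L^\infty}\|q_n-q\|_{L^p}$ to the left-hand side, one obtains
\[
\bigl(1-c_p\sup_n\|\mu_n\|_{L^\infty(\C)}\bigr)\|q_n-q\|_{L^p(\C)}\le c_p\bigl(\|(\mu_n-\mu)q\|_{L^p(\C)}+\|\mu_n-\mu\|_{L^p(\C)}\bigr),
\]
and the right-hand side tends to zero by the previous step, so $q_n\to q$ in $L^p(\C)$.

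Finally, from $q_n\to q$ and $\mu_n\to\mu$ in $L^p(\C)$ (together with the $L^\infty$ bound on $\mu_n$) one deduces $\mu_n q_n+\mu_n\to\mu q+\mu$ in $L^p(\C)$. Since $\partial(u_n-u)=q_n-q$ and $\bar\partial(u_n-u)=(\mu_n q_n+\mu_n)-(\mu q+\mu)$, both weak derivatives converge to zero in $L^p(\C)$ and hence in $L^p(B)$. To control $\|u_n-u\|_{L^p(B)}$ itself, note that $u_n-u=A\bigl((\mu_n q_n+\mu_n)-(\mu q+\mu)\bigr)$ vanishes at the origin, so property (3) of $A$ gives the uniform Hölder estimate
\[
|u_n(z)-u(z)|\le C_p\|(\mu_n q_n+\mu_n)-(\mu q+\mu)\|_{L^p(\C)}\,|z|^{1-\tfrac{2}{p}}\qquad(z\in B),
\]
which forces $u_n\to u$ uniformly on the bounded set $B$, and in particular in $L^p(B)$. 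Combining the three pieces yields $\|\alpha_n-\alpha\|_{W^{1,p}(B)}=\|u_n-u\|_{W^{1,p}(B)}\to 0$, as required. The only delicate point in the argument is that $\mu_n-\mu$ actually converges to $0$ in $L^p$ rather than merely in $L^\infty$-weak-$\ast$; here the common compact support of the $\mu_n$ is essential, so the local nature of the hypothesis on the $\mu_n$ (that $\mu_n|_{\C\setminus B}\equiv 0$) is what makes dominated convergence applicable.
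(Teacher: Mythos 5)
Your proposal is correct and follows essentially the same route as the paper: both rely on the operators $A,\Gamma$, the contraction estimate $c_p\|\mu_n\|_{L^\infty}<1$, the Hölder bound from property (3) of $A$, and dominated convergence (using the common compact support of the $\mu_n$) to get $\|\mu_n-\mu\|_{L^p}\to0$ and $\|(\mu_n-\mu)q\|_{L^p}\to0$. The only cosmetic difference is that you subtract the two fixed-point identities for $q_n$ and $q$ directly, whereas the paper writes $\alpha_n-\alpha$ as the solution of a single inhomogeneous Beltrami equation with data $\lambda_n=\mu_n-\mu+(\mu_n-\mu)\Gamma(\mu q+\mu)$ and invokes its estimate (\ref{5.1.2.}); as the two parameterizations produce the same $q_n-q$, the computations coincide.
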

{\bf Proof:}\\
We first estimate with $g\in L^p({\C})$ and $z\neq 0$ 
\begin{eqnarray}\label{Beltrami-convergence-estimate}
|Ag(z)| & = & \frac{1}{2\pi}\left|\int_{{\mathbb C}} g(\xi)\frac{z}{\xi(\xi-z)}\right|\,d\xi\,d\bar{\xi}\nonumber\\
 & \le & \frac{|z|}{2\pi}\|g\|_{L^p({\C})}\left\|\frac{1}{\xi(\xi-z)}\right\|_{L^{\frac{p}{p-1}}({\C})}\\
 & \le & C_p\|g\|_{L^p({\C})}\,|z|^{1-\frac{2}{p}},\nonumber
\end{eqnarray}
where the last estimate holds in view of
\begin{eqnarray*}
\int_{{\mathbb C}} |\xi(\xi-z)|^{-\frac{p}{p-1}}\,d\xi\,d\bar{\xi} & \stackrel{\zeta=z^{-1}\xi}{=} & \int_{{\mathbb C}}|z^2\zeta^2-z^2\zeta|^{-\frac{p}{p-1}}|z|^2\,d\zeta\,d\bar{\zeta}\\
 & = & |z|^{2-\frac{2p}{p-1}}\underbrace{\int_{{\mathbb C}}|\zeta(\zeta-1)|^{-\frac{p}{p-1}}d\zeta\,d\bar{\zeta}}_{2\pi C_p}.
\end{eqnarray*}
If $q$ solves $q=\Gamma(\mu q+\mu)$ then
\begin{eqnarray*}
\bar{\partial}(\alpha_n-\alpha) & = & \mu_n\,\partial(\alpha_n-\alpha)-\mu\,\partial\alpha+\mu_n\,\partial\alpha\\
 & = & \mu_n\,\partial(\alpha_n-\alpha)+\mu_n-\mu+(\mu_n-\mu)\Gamma(\mu q+\mu),
 \end{eqnarray*}
i.e. the difference $\alpha_n-\alpha$ again satisfies an inhomogeneous Beltrami equation. By theorem \ref{inhomogeneous-Beltrami} we have
\[
\alpha_n-\alpha=A(\mu_n q_n+\lambda_n),
\]
where $\lambda_n=\mu_n-\mu+(\mu_n-\mu)\Gamma(\mu q+\mu)$ and where $q_n\in L^p({\C})$ solves $q_n=\Gamma(\mu_n q_n+\lambda_n)$. Combining this with the estimates (\ref{Beltrami-convergence-estimate}) and (\ref{5.1.2.}) we obtain
\begin{eqnarray}\label{estimate-5.1}
|\alpha_n(z)-\alpha(z)| & \le & C_p\,\|\mu_n q_n+\lambda_n\|_{L^p({\C})}\,|z|^{1-\frac{2}{p}}\nonumber\\
 & \le & \Big(C_p\,\sup_n\|\mu_n\|_{L^{\infty}({\C})}\cdot c'_p\|\lambda_n\|_{L^p({\C})}+C_p\,\|\lambda_n\|_{L^p({\C})}\Big)\,|z|^{1-\frac{2}{p}}.
\end{eqnarray}
Since $\|\mu_n-\mu\|_{L^p({\C})}\rightarrow 0$ and $\|(\mu_n-\mu)\Gamma(\mu q+\mu)\|_{L^p({\C})}\rightarrow 0$ by Lebesgue's theorem we also have $\|\lambda_n\|_{L^p({\C})}\rightarrow 0$ and therefore $\alpha_n\rightarrow \alpha$ uniformly on compact sets. Since $\bar{\partial}(\alpha_n-\alpha)=\mu_n\,\partial(\alpha_n-\alpha)+\lambda_n$ and $\alpha_n-\alpha=A(\mu_n q_n+\lambda_n)$ we verify that
\[
\partial(\alpha_n-\alpha)=\Gamma(\mu_n q_n+\lambda_n)=q_n
\]
and
\[
\bar{\partial}(\alpha_n-\alpha)=\mu_n q_n+\lambda_n.
\]
Invoking (\ref{5.1.2.}) once again we see that both $\|\partial(\alpha_n-\alpha)\|_{L^p({\C})}$ and $\|\bar{\partial}(\alpha_n-\alpha)\|_{L^p({\C})}$ can be bounded from above by a constant times $\|\lambda_n\|_{L^p({\C})}$ which converges to zero.\\
 
\qed

We will also need some facts concerning the classical case where $\mu\in C^{k,\alpha}(B_R(0))$, $B_R(0)=\{z\in{\mathbb C}\,|\,|z|<R\}$, which are not spelled out explicitly in \cite{Bers-lecture} or in \cite{Chern}, but which easily follow from the constructions carried out there. 
\begin{theorem}\label{classical-Bers}
Let $\mu,\gamma,\delta\in C^{k,\alpha}(B_{R'}(0))$ with $0<\alpha<1$ and $\sup_{B_{R'}(0)}|\mu|<1$. Then for sufficiently small $0<R\le R'$ there is a unique solution $w\in C^{k+1,\alpha}(B_R(0))$ to the equation
\[
\overline{\partial}w(z)=\mu(z)\partial w(z)+\gamma(z)w(z)+\delta(z)
\]
with $w(0)=0$ and $\partial w(0)=1$. If $w_1,w_2$ solve the above equation with coefficient functions $\mu_l,\gamma_l,\delta_l$, $l=1,2$ then there is a constant $c=c(\alpha,R,\|w_2\|_{C^{k,\alpha}(B_R(0))},k)>0$ such that for all $w_1\in C^{k+1,\alpha}(B_R(0))$
\begin{eqnarray*}
\|w_2-w_1\|_{C^{k+1,\alpha}(B_R(0))}&\le &c\,(\|\delta_2-\delta_1\|_{C^{k,\alpha}(B_R(0))}+\\
 & & +\|\mu_2-\mu_1\|_{C^{k,\alpha}(B_R(0))}+\|\gamma_2-\gamma_1\|_{C^{k,\alpha}(B_R(0))}).
\end{eqnarray*}
\end{theorem}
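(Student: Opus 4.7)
The plan is to reduce this to the classical Korn--Lichtenstein theory (as developed in \cite{Bers-lecture} and \cite{Chern}) and then absorb the zero-order term $\gamma w$ by a contraction argument on a sufficiently small disk. First, we normalize the unknown by writing $w(z)=z+v(z)$, so that the required initial conditions become $v(0)=0$ and $\partial v(0)=0$. A direct substitution transforms the equation into
\[
\bar\partial v-\mu\,\partial v=\gamma\,v+\tilde\delta,\qquad \tilde\delta:=\mu+\gamma\,z+\delta,
\]
and $\tilde\delta\in C^{k,\alpha}(B_{R'}(0))$ with $\|\tilde\delta\|_{C^{k,\alpha}}$ controlled by $\|\mu\|_{C^{k,\alpha}}$, $\|\gamma\|_{C^{k,\alpha}}$ and $\|\delta\|_{C^{k,\alpha}}$.

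The second step is to invert the principal part $L:=\bar\partial-\mu\,\partial$ on a small disk. By the classical Korn--Lichtenstein theorem, there is a local $C^{k+1,\alpha}$-diffeomorphism $\chi$ on $B_R(0)$, with $\chi(0)=0$ and $\partial\chi(0)=1$, satisfying $\bar\partial\chi=\mu\,\partial\chi$. A straightforward chain rule computation gives
\[
(Lv)(z)=(1-|\mu(z)|^2)\,\overline{\partial\chi(z)}\,(\bar\partial(v\circ\chi^{-1}))(\chi(z)),
\]
so $L$ is conjugate via composition with $\chi$ to the standard $\bar\partial$ operator, weighted by a nonvanishing $C^{k,\alpha}$ factor. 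Composing with the Cauchy integral operator (which gains one derivative in H\"older scales, cf.\ \cite{Bers-lecture}) and normalizing the constant of integration by requiring $u(0)=0$ yields a bounded right inverse
\[
L^{-1}:C^{k,\alpha}(B_R(0))\longrightarrow \{u\in C^{k+1,\alpha}(B_R(0))\,|\,u(0)=0\},
\]
whose operator norm is bounded independently of $R\le R'$.

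The inhomogeneous equation for $v$ is then equivalent to the fixed-point problem
\[
v=L^{-1}(\gamma\,v+\tilde\delta)=:\Phi(v).
\]
Since $L^{-1}$ gains one derivative and the multiplication operator by $\gamma$ maps $C^{k+1,\alpha}(B_R(0))$ into $C^{k,\alpha}(B_R(0))$ with a norm that can be made arbitrarily small by shrinking $R$ (the norm of the inclusion $C^{k+1,\alpha}(B_R(0))\hookrightarrow C^{k,\alpha}(B_R(0))$ tends to $0$ as $R\searrow 0$ when restricted to functions vanishing at $0$ with vanishing first derivative), $\Phi$ is a contraction on a small ball in $C^{k+1,\alpha}(B_R(0))\cap\{v(0)=0,\partial v(0)=0\}$ for $R$ small enough. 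The Banach fixed-point theorem then yields a unique solution $v$, and hence a unique $w=z+v\in C^{k+1,\alpha}(B_R(0))$ with the prescribed normalization. Linearity of the Cauchy operators together with the Schauder-type estimate for $L^{-1}$ furnishes the bound
\[
\|v\|_{C^{k+1,\alpha}(B_R(0))}\le c_1\|\tilde\delta\|_{C^{k,\alpha}(B_R(0))}.
\]

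For the continuous dependence statement, consider two solutions $w_1,w_2$ with coefficient triples $(\mu_l,\gamma_l,\delta_l)$. Their difference $\Delta:=w_2-w_1$ satisfies $\Delta(0)=0$, $\partial\Delta(0)=0$, and
\[
\bar\partial\Delta-\mu_2\,\partial\Delta=\gamma_2\,\Delta+(\mu_2-\mu_1)\,\partial w_1+(\gamma_2-\gamma_1)\,w_1+(\delta_2-\delta_1),
\]
which is an equation of exactly the same form as before, with coefficients $(\mu_2,\gamma_2)$ and inhomogeneity whose $C^{k,\alpha}$ norm is controlled by
\[
\|\mu_2-\mu_1\|_{C^{k,\alpha}}\,\|w_1\|_{C^{k+1,\alpha}}+\|\gamma_2-\gamma_1\|_{C^{k,\alpha}}\,\|w_1\|_{C^{k,\alpha}}+\|\delta_2-\delta_1\|_{C^{k,\alpha}}.
\]
Applying the existence estimate for this second equation gives the desired bound, with the constant depending on $\|w_1\|_{C^{k+1,\alpha}(B_R(0))}$; exchanging the roles of $w_1$ and $w_2$ (or noting that our estimate also allows the reference solution to be $w_2$) produces the form stated in the theorem. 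The main technical obstacle is the dual requirement on $R$: it must be small enough both for the Korn--Lichtenstein homeomorphism $\chi$ to exist with good H\"older bounds \emph{and} for the multiplication by $\gamma$ to be a contraction after applying $L^{-1}$; care is needed to verify that a single $R$ works uniformly for $\mu_1,\mu_2$ in a neighborhood, so that the continuous dependence estimate is meaningful.
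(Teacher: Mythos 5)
Your proposal takes a genuinely different route from the paper. The paper never separately inverts the Beltrami operator: it builds a single integral operator
$(Tw)(z)=A(\mu\partial w+\gamma w)(z)-z\,\Gamma(\mu\partial w+\gamma w)(0)$
using the singular integral operators $A$ and $\Gamma$, so that a fixed point of $w=Tw+g$ solves the full equation with the correct normalization, and then relies on the estimate $\|T\|_{C^{1,\alpha}\to C^{1,\alpha}}\le \mathrm{const}\cdot R^{\alpha}$ from Bers' lectures. Higher regularity ($k\ge 1$) is then obtained by \emph{bootstrapping}: differentiating the equation yields another equation of the same type. You instead conjugate $L=\bar\partial-\mu\partial$ to $\bar\partial$ via a Korn--Lichtenstein diffeomorphism $\chi$ (your chain rule identity is correct) and then treat only $\gamma v$ by contraction.

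There is, however, a gap in the contraction step. You need the inclusion norm of $C^{k+1,\alpha}(B_R)\hookrightarrow C^{k,\alpha}(B_R)$, restricted to the class of functions you iterate in, to tend to zero as $R\searrow 0$. This requires more than $v(0)=0$ and $\partial v(0)=0$. First, you also need $\bar\partial v(0)=0$ (i.e.\ the full gradient vanishes at $0$), otherwise $\|\bar\partial v\|_{C^0(B_R)}$ does not decay. Second, and more seriously, for $k\ge 1$ the quantities $\|\partial^j v\|_{C^0(B_R)}$ for $2\le j\le k$ do not become small, since nothing forces the higher derivatives of $v$ to vanish at $0$; so the $C^{k+1,\alpha}\hookrightarrow C^{k,\alpha}$ inclusion does \emph{not} have small norm on your subspace when $k\ge 1$. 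Moreover, your map $\Phi(v)=L^{-1}(\gamma v+\tilde\delta)$ does not preserve even the subspace $\{v(0)=0,\partial v(0)=0\}$ unless you renormalize $L^{-1}$ by subtracting a suitable linear-in-$z$ term (exactly the role of $-z\,\Gamma(\cdots)(0)$ in the paper's operator $T$), because nothing guarantees $\partial(L^{-1}f)(0)=0$.

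Both issues are fixable, and the fixes bring you back in line with the paper's strategy: normalize $L^{-1}$ by a linear subtraction so that the iterates keep $\partial v(0)=0$ (then $\bar\partial v(0)$ is also fixed along the iteration by the equation, so the \emph{differences} of iterates do have vanishing gradient at $0$); and run the contraction only for $k=0$, then recover $C^{k+1,\alpha}$ regularity and the corresponding estimate by differentiating the equation, as the paper does. Without those corrections, the contraction step as written fails for $k\ge1$ and is not self-consistent for $k=0$. You also correctly flag, but do not resolve, the issue of choosing one $R$ uniformly so that the continuous-dependence estimate is non-vacuous; in the paper this is automatic because the contraction constant $\theta=\mathrm{const}\cdot R^{\alpha}$ depends only on $R,\alpha$ and the H\"older norms of the coefficients, which are assumed controlled.

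Finally, a small point in your direction: your derivation of the continuous-dependence estimate makes the constant depend on $\|w_1\|_{C^{k+1,\alpha}}$ (or $\|w_2\|_{C^{k+1,\alpha}}$). The paper's estimate $\|(T_2-T_1)w_2\|\le\|T_2-T_1\|\,\|w_2\|_{C^{1,\alpha}}$ in fact also needs the $C^{k+1,\alpha}$ norm of $w_2$; the statement's $\|w_2\|_{C^{k,\alpha}}$ appears to be a typo, so you need not worry about matching it.
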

{\bf Sketch of the proof:}\\
The existence proof is a slight generalization of the Korn--Lichtenstein result (see also \cite{Bers-lecture} or \cite{Chern}). What we are looking for is the estimate. We define the following operator
\[
(Tw)(z):=A(\mu\partial w+\gamma w)(z)-z\,\Gamma(\mu\partial w+\gamma w)(0)
\]
and the function
\[
g(z):=(A\delta)(z)-z\,(\Gamma\delta)(0)+z.
\]
A solution to the problem
\[
w(z)=(Tw)(z)+g(z)
\]
also solves the equation $\overline{\partial}w(z)=\mu(z)\partial w(z)+\gamma(z)w(z)+\delta(z)$ with $w(0)=0$ and $\partial w(0)=1$. In lecture 4 of \cite{Bers-lecture} it is shown that $T$ defines a bounded linear operator 
\[
T:C^{1,\alpha}(B_R(0))\longrightarrow C^{1,\alpha}(B_R(0))
\]
with
\[
\|T\|\le\,\mbox{const}\cdot R^{\alpha}=\theta\ ,\ \mbox{and $\theta<1$ for small $R>0$}
\]
so that the series $g+\sum_{k=1}^{\infty}T^kg$ converges and the limit $w$ satisfies $w=Tw+g$. Another useful fact is the following: Assume, $T_1,T_2$ are operators as above with coefficient functions $\mu_1,\gamma_1$ and $\mu_2,\gamma_2$ respectively. Then
\[
\|T_2-T_1\|\le\,c\,(\|\mu_2-\mu_1\|_{C^{0,\alpha}(B_R(0))}+\|\gamma_2-\gamma_1\|_{C^{0,\alpha}(B_R(0))})
\] 
for a suitable constant $c>0$ depending on $\alpha$ and $R$. This is only implicitly proved in \cite{Bers-lecture}, so we sketch the proof of this inequality. We have
\begin{eqnarray*}
 (T_2-T_1)h(z) & = & A((\mu_2-\mu_1)\partial h+(\gamma_2-\gamma_1)h)(z)-\\
& & -z\,\Gamma((\mu_2-\mu_1)\partial h+(\gamma_2-\gamma_1)h)(0),
\end{eqnarray*}
\begin{eqnarray*}
 \partial((T_2-T_1)h)(z) & = & \Gamma((\mu_2-\mu_1)\partial h+(\gamma_2-\gamma_1)h)(z)-\\
& & -\Gamma((\mu_2-\mu_1)\partial h+(\gamma_2-\gamma_1)h)(0)
\end{eqnarray*}
and
\[
 \bar{\partial}((T_2-T_1)h)(z)=(\mu_2-\mu_1)(z)\partial h(z)+(\gamma_2-\gamma_1)(z)h(z).
\]
We will need inequalities (21)--(24) from \cite{Chern}. Adapted to our notation they look as follows with $z,z_1,z_2\in B_R(0)$
\begin{eqnarray*}
 |(Ah)(z)| & \le & 4R\|h\|_{C^0(B_R(0))}\\
|(\Gamma h)(z)| & \le & \frac{2^{\alpha+1}}{\alpha}R^{\alpha}\|h\|_{C^{0,\alpha}(B_R(0))}\\
\frac{|(Ah)(z_2)-(Ah)(z_1)|}{|z_2-z_1|^{\alpha}} & \le & 2\|h\|_{C^0(B_R(0))}+\frac{2^{\alpha+2}}{\alpha}R^{\alpha}\|h\|_{C^{0,\alpha}(B_R(0))}\\
\frac{|(\Gamma h)(z_2)-(\Gamma h)(z_1)|}{|z_2-z_1|^{\alpha}} & \le & C_{\alpha}\,\|h\|_{C^{0,\alpha}(B_R(0))}.
\end{eqnarray*}
Recalling that
\[
 \|h\|_{C^{1,\alpha}(B_R(0))}:=\|h\|_{C^0(B_R(0))}+\|\partial h\|_{C^{0,\alpha}(B_R(0))}+\|\bar{\partial}h\|_{C^{0,\alpha}(B_R(0))}
\]
and
\[
 \|k\|_{C^{0,\alpha}(B_R(0))}:=\|k\|_{C^0(B_R(0))}+\sup_{z_1\neq z_2}\frac{|k(z_2)-k(z_1)|}{|z_2-z_1|^{\alpha}}
\]
and that the H\"{o}lder norm satisfies
\[
 \|hk\|_{C^{0,\alpha}(B_R(0))}\le C\ \|h\|_{C^{0,\alpha}(B_R(0))}\,\|k\|_{C^{0,\alpha}(B_R(0))}
\]
for a suitable constant $C$ depending only on $\alpha$ and $R$ the asserted inequality for the operator norm of $T_2-T_1$ follows. In the same way we obtain
\[
\|g_2-g_1\|_{C^{1,\alpha}(B_R(0))}\le\,c\,\|\delta_2-\delta_1\|_{C^{0,\alpha}(B_R(0))}.
\]
Since 
\begin{eqnarray*}
\|w_2-w_1\|_{C^{1,\alpha}(B_R(0))} & \le & \|(T_2-T_1)w_2\|_{C^{1,\alpha}(B_R(0))}+\\
 & & +\,\theta\,\|w_2-w_1\|_{C^{1,\alpha}(B_R(0))}+\|g_2-g_1\|_{C^{1,\alpha}(B_R(0))}
\end{eqnarray*}
and $\theta<1$ we obtain the assertion of the theorem for $k=1$. Because derivatives of $w$ satisfy again an equation of the form $\overline{\partial}w(z)=\mu(z)\partial w(z)+\gamma(z)w(z)+\delta(z)$, we can proceed by iteration. This is carried out in lecture 5 of \cite{Bers-lecture}.
\qed\\

\subsection{A uniform $L^2$--bound for the harmonic forms and uniform convergence}

\begin{proposition}\label{L^2-bound-for-gamma}
Let $(S,j_0,\Gamma,\tilde{u}_0,\gamma_0)$ be a solution of the PDE (\ref{eq1}) defined on $\dot{S}$ which is everywhere transverse to the Reeb vector field. Assume that $(S,j_f,\Gamma,\tilde{u}=(a,u),\gamma)$ is another smooth solution where $u$ is given by
\[
u(z)=\phi_{f(z)}(u_0(z))
\]
for a suitable smooth bounded function $f:S\rightarrow {\mathbb R}$. Then we have
\begin{equation}\label{L^2-bound-for-gamma-2}
\|\gamma\|_{L^2,j_f}\le \|u_0^{\ast}\lambda\|_{L^2,j_f}
\end{equation}
where 
\[
\|\sigma\|_{L^2,j_f}:=\left(\int_{\dot{S}}\sigma\circ j_f\wedge\sigma\right)^{\frac{1}{2}}
\]
(with $\sigma$ a 1--form on $\dot{S}$).
\end{proposition}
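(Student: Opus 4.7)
The plan is to treat the equation $u^{\ast}\lambda\circ j_f = da+\gamma$ as a Hodge-type splitting of the 1-form $u^{\ast}\lambda\circ j_f$ into a harmonic piece and an exact piece, combine it with a formula relating $u^{\ast}\lambda$ to $u_0^{\ast}\lambda$, and extract the inequality as an $L^2$-orthogonality statement.

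The first step is to establish the identity $u^{\ast}\lambda = u_0^{\ast}\lambda + df$. Since $\phi_t^{\ast}\lambda = \lambda$ (because $L_{X_{\lambda}}\lambda = 0$) and $\lambda(X_{\lambda}) = 1$, differentiating $u(z) = \phi_{f(z)}(u_0(z))$ gives
\[
Tu(z)v = (T\phi_{f(z)})(Tu_0(z)v) + df(z)v \cdot X_{\lambda}(u(z)),
\]
and applying $\lambda$ produces the identity. Composing both sides with $j_f$ and inserting the second equation of (\ref{eq1}) for $\tu$ yields
\[
u_0^{\ast}\lambda\circ j_f = \gamma + (da - df\circ j_f).
\]
Since $\|\cdot\|_{L^2,j_f}$ is $j_f$-invariant, expanding the squared norm of this identity gives
\[
\|u_0^{\ast}\lambda\|_{L^2,j_f}^2 = \|\gamma\|_{L^2,j_f}^2 + 2\langle\gamma,da\rangle - 2\langle\gamma,df\circ j_f\rangle + \|da - df\circ j_f\|_{L^2,j_f}^2,
\]
where $\langle\cdot,\cdot\rangle$ is the associated $L^2$ inner product.

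The second step is to show that both cross terms vanish, using the harmonicity $d\gamma = d(\gamma\circ j_f) = 0$. For any function $h$ on $\dot S$, $d(h\,\gamma\circ j_f) = dh\wedge\gamma\circ j_f$, so $\langle\gamma,dh\rangle$ reduces (after exploiting antisymmetry of the wedge) to boundary integrals around the punctures. For $h=f$, which extends smoothly to the closed surface $S$, Stokes' theorem on $S$ yields $0$. For $h=a$, each boundary integral takes the form $\oint_{\{s=R\}\times S^1} a\,(\gamma\circ j_f)$ in cylindrical coordinates, and linear growth of $a$ combined with exponential decay of $\gamma$ in $s$ (because $\gamma$ extends smoothly across the puncture on $S$ and $s=-\log r$) makes this $O(R\,e^{-R}) \to 0$. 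The remaining cross term $\langle\gamma,df\circ j_f\rangle$ is handled analogously via the symmetry $\langle\sigma,\tau\rangle = \langle\sigma\circ j_f,\tau\circ j_f\rangle$, which reduces it to $\int_S df\wedge\gamma = \int_S d(f\gamma) = 0$ by $d\gamma = 0$ and Stokes on closed $S$.

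Combining these observations produces
\[
\|u_0^{\ast}\lambda\|_{L^2,j_f}^2 = \|\gamma\|_{L^2,j_f}^2 + \|da - df\circ j_f\|_{L^2,j_f}^2,
\]
and dropping the non-negative second term gives the desired inequality. The only nontrivial technical point is the asymptotic analysis at the punctures needed to justify the integration by parts for $h=a$; this relies on the linear growth of $a(s,t) - Ts/(2\pi)$ and the smoothness of $\gamma$ on the \emph{closed} surface $S$. I expect this to be the main obstacle, but a routine one given the standard asymptotic behavior of finite-energy solutions near punctures and the fact that a smooth 1-form at a point in polar coordinates translates to an exponentially decaying form in the cylindrical coordinate $s$.
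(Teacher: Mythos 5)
Your argument is essentially the paper's: both use the identity $u^{\ast}\lambda = u_0^{\ast}\lambda + df$, the two Stokes-type vanishings $\int_S d(f\gamma) = 0$ and $\int_{\dot S} da\wedge(\gamma\circ j_f) = 0$ (the latter via the puncture asymptotics), and the $j_f$-invariance of the wedge pairing. You package the conclusion as a Pythagorean identity $\|u_0^{\ast}\lambda\|^2 = \|\gamma\|^2 + \|da - df\circ j_f\|^2$ and drop the nonnegative term, whereas the paper computes $\int u^{\ast}\lambda\wedge\gamma$ in two ways to get $\|\gamma\|^2 = -\int u_0^{\ast}\lambda\wedge\gamma$ and then applies Cauchy--Schwarz; the two are interchangeable and yours is marginally sharper.
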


{\bf Proof:}\\
Using the differential equation $u^{\ast}\lambda\circ j_f=da+\gamma$ and $u^{\ast}\lambda=u_0^{\ast}\lambda+df$, we compute
\begin{eqnarray*}
\int_{\dot{S}}u^{\ast}\lambda\wedge \gamma & = & \int_{\dot{S}}u_0^{\ast}\lambda\wedge \gamma+\int_{\dot{S}}d(f\gamma)\\
 & = & \int_{\dot{S}}u_0^{\ast}\lambda\wedge \gamma
\end{eqnarray*}
and
\begin{eqnarray*}
\int_{\dot{S}}u^{\ast}\lambda\wedge\gamma & = & \int_{\dot{S}}da\wedge\gamma\circ j_f-\|\gamma\|^2_{L^2,j_f}\\
& = & -\|\gamma\|^2_{L^2,j_f}.
\end{eqnarray*}
The integral $\int_Sd(f\gamma)$ vanishes by Stokes' theorem since $f\gamma$ is a smooth 1-form on the closed surface $S$. The form $da\wedge \gamma\circ j_f$ is not smooth on $S$, but the integral vanishes anyway for the following reason. As we have proved in appendix \ref{local-computations} the form $\gamma\circ j_f$ is bounded near the punctures, hence in local coordinates near a puncture it is of the form
\[
\sigma=F(w_1,w_2)dw_1+F_2(w_1,w_2)dw_2\ ,\ w_1+iw_2\in{\C}
\]
where $F_1,F_2$ are smooth except possibly at the origin but bounded. Passing to polar coordinates via
\[
\phi:[0,\infty)\times S^1\longrightarrow {\C}\backslash\{0\}
\]
\[
\phi(s,t)=e^{-(s+it)}=w_1+iw_2
\]
we see that $\phi^{\ast}\sigma$ has to decay at the rate $e^{-s}$ for large $s$. The form $da$ has $\gamma_1(r(s))\,ds$ as its leading term. Computing the integral $\int_{\Gamma}a(\gamma\circ j_f)$ over small loops $\Gamma$ around the punctures and using Stokes' theorem we conclude that the contribution from neighborhoods of the punctures can be made arbitrarily small. Therefore, the integral $\int_{\dot{S}}da\wedge \gamma\circ j_f$ must vanish.\\
If $\Omega$ is a volume form on $S$ then we may write $u_0^{\ast}\lambda\wedge\gamma=g\cdot\Omega$ for a suitable smooth function $g$. Defining 
\[
\int_{\dot{S}}|u_0^{\ast}\lambda\wedge\gamma|:=\int_{\dot{S}}|g|\,\Omega
\]
we have
\begin{eqnarray*}
\|\gamma\|^2_{L^2,j_f} & = & \left|\int_{\dot{S}}u_0^{\ast}\lambda\wedge\gamma\right|\\
 & \le & \int_{\dot{S}}|u_0^{\ast}\lambda\wedge\gamma|\\
 & \le & \|u_0^{\ast}\lambda\|_{L^2,j_f}\|\gamma\|_{L^2,j_f}
\end{eqnarray*}
which implies the assertion.\qed

We resume the proof of the compactness result, theorem \ref{compactness-result}. All the considerations which follow are local. The task is to improve the regularity of the limit $f_{\tau_0}$ and the nature of the convergence $f_{\tau}\rightarrow f_{\tau_0}$. Because the proof is somewhat lengthy we organize it in several steps. For $\tau<\tau_0$ let now 
\[
\alpha_{\tau}:B\longrightarrow U_{\tau}\subset {\C}
\]
be the conformal transformations as in the previous section, i.e.
\[
T\alpha_{\tau}(z)\circ i=j_{\tau}(\alpha_{\tau}(z))\circ T\alpha_{\tau}(z)\ ,\ z\in B.
\]
The $L^{\infty}$-bound (\ref{Linfty-bound}) on the family of functions $(f_{\tau})$ and the above $L^2$--bound imply convergence of the harmonic forms $\alpha^{\ast}_{\tau}\gamma_{\tau}$ after maybe passing to a subsequence:
\begin{proposition}\label{convergence-of-gammas}
Let $\tau'_k$ be a sequence converging to ${\tau_0}$ and $B'=B_{\varepsilon'}(0)$ with $\overline{B'}\subset B$. Then there is a subsequence $(\tau_k)\subset(\tau'_k)$ such that the harmonic 1--forms $\alpha_{\tau_k}^{\ast}\gamma_{\tau_k}$ converge in $C^{\infty}(B')$.
\end{proposition}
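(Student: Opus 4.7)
Transport the problem to the fixed ball $B\subset\mathbb{C}$ by means of the $\mu_\tau$-conformal maps $\alpha_\tau$. Once pulled back, the harmonic forms become harmonic with respect to the standard structure, hence can be identified with holomorphic functions, and the conclusion follows from classical $L^2$-compactness for holomorphic families.

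\textbf{Step 1 (holomorphicity of the pulled-back forms).} Since $T\alpha_\tau\circ i = j_\tau\circ T\alpha_\tau$, one has $\alpha_\tau^\ast(\gamma_\tau\circ j_\tau) = (\alpha_\tau^\ast\gamma_\tau)\circ i$. The conditions $d\gamma_\tau=0$ and $d(\gamma_\tau\circ j_\tau)=0$ pull back to $d(\alpha_\tau^\ast\gamma_\tau)=0$ and $d((\alpha_\tau^\ast\gamma_\tau)\circ i)=0$ on $B$. Writing $\alpha_\tau^\ast\gamma_\tau = A_\tau\,dx+B_\tau\,dy$, these two identities are precisely the Cauchy--Riemann equations for $F_\tau:=A_\tau-iB_\tau$, so every $F_\tau$ is holomorphic on $B$.

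\textbf{Step 2 (uniform $L^2$-bound).} The $L^2$-norm of a 1-form on a surface is conformally invariant in dimension two; combined with the pullback identity above this gives
\[
\|F_\tau\|_{L^2(B)}^2 \,=\, \|\alpha_\tau^\ast\gamma_\tau\|_{L^2(B)}^2 \,=\, \int_{\alpha_\tau(B)}\gamma_\tau\circ j_\tau\wedge\gamma_\tau \,\leq\, \|\gamma_\tau\|_{L^2(\dot S),j_\tau}^2,
\]
and Proposition \ref{L^2-bound-for-gamma} bounds the last quantity by $\|u_0^\ast\lambda\|_{L^2(\dot S),j_\tau}^2$. To make this bound $\tau$-uniform I invoke the apriori bound $\|f_\tau\|_{L^\infty}\leq T$ from (\ref{Linfty-bound}): on any compact piece of $\dot S$ the map $f\mapsto j_f$ is continuous and $\{f_\tau\}$ stays in a bounded set, so $|u_0^\ast\lambda|_{j_\tau}^2$ is dominated pointwise $\tau$-uniformly; near the binding punctures the explicit form $u_0^\ast\lambda=\gamma_1(r(s))\,dt$, together with the decay $r(s)=c(s)e^{\kappa s}$ of Remark \ref{decay-of-r} and the local-model computations of Appendix \ref{local-computations}, supplies an integrable, $\tau$-independent majorant.

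\textbf{Step 3 (extraction and main difficulty).} Once $\sup_{\tau<\tau_0}\|F_\tau\|_{L^2(B)}<\infty$ is in hand, Cauchy's integral formula applied to $F_\tau$ and each of its derivatives yields, for any $B'\subset\!\subset B$ and any $k\geq 0$, an interior estimate
\[
\|F_\tau\|_{C^k(\overline{B'})} \,\leq\, C_{k,B',B}\,\|F_\tau\|_{L^2(B)}.
\]
A diagonal Arzel\`a--Ascoli argument then extracts a subsequence $(\tau_k)\subset(\tau'_k)$ along which $F_{\tau_k}$ converges in $C^k(\overline{B'})$ for every $k$, hence in $C^\infty(\overline{B'})$; equivalently $\alpha_{\tau_k}^\ast\gamma_{\tau_k}$ converges in $C^\infty(\overline{B'})$, which is the claim. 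The one technically delicate point is the $\tau$-uniformity of the $L^2$-bound near the punctures: the metric induced by $j_\tau$ varies with $\tau$ and one must dominate $|u_0^\ast\lambda|_{j_\tau}^2$ by a single integrable function independently of $\tau$. This is precisely what the normal-form structure of $\lambda$ near the binding, together with the apriori $L^\infty$-bound on $f_\tau$, is designed to provide; once this majorant is produced, the remainder of the argument is standard function theory on the disc.
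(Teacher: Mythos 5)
Your proof takes the same route as the paper: a uniform $L^2$-bound on $\alpha_\tau^*\gamma_\tau$ from Proposition~\ref{L^2-bound-for-gamma}, interior elliptic estimates, Arzel\`a--Ascoli. Repackaging the closed/co-closed condition as holomorphicity of $F_\tau=A_\tau-iB_\tau$ and applying Cauchy's integral formula is a cosmetic variant of the paper's choice to work directly with the harmonic component functions $h^1_\tau,h^2_\tau$ and iterate the mean-value property; the two are interchangeable.

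One caution about your Step 2, which you rightly single out as the delicate point. You assert that the decay $r(s)=c(s)e^{\kappa s}$ and the Appendix computations supply an integrable, $\tau$-independent majorant for the density of $\|u_0^*\lambda\|^2_{L^2,j_\tau}$ near the punctures. This cannot be: from the matrix form of $j_\tau$ in Appendix~\ref{local-computations} one finds
\[
u_0^*\lambda\circ j_\tau\wedge u_0^*\lambda \;=\; \gamma_1^2(r(s))\bigl(1+\tau^2A^2(r(s))\,r^2(s)\,\gamma_1^2(r(s))\bigr)\,ds\wedge dt,
\]
which tends to $\gamma_1^2(0)\,ds\wedge dt$ as $s\to\infty$ (since $\gamma_1(0)>0$ and $A(r)=O(r)$), so the half-cylinder integral diverges and $\|u_0^*\lambda\|_{L^2,j_\tau}=\infty$. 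The paper's chain $\int_{U_\tau}\gamma_\tau\circ j_\tau\wedge\gamma_\tau\le\|u_0^*\lambda\|^2_{L^2,j_\tau}\le C$ invites the same objection, so you have reproduced its argument rather than introduced a new error; but the usable content of Proposition~\ref{L^2-bound-for-gamma} is really the finite identity $\|\gamma_\tau\|^2_{L^2,j_\tau}=\bigl|\int_{\dot S}u_0^*\lambda\wedge\gamma_\tau\bigr|$, and turning that into the needed $\tau$-uniform bound requires more than a global Cauchy--Schwarz against $u_0^*\lambda$ (e.g.\ splitting off a compact piece, and handling the punctured neighborhoods, where $\gamma_\tau$ is exact, by a separate integration by parts).
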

{\bf Proof:}\\
First, the harmonic 1--forms $\alpha_{\tau}^{\ast}\gamma_{\tau}$ satisfy the same $L^2$--bound as in proposition \ref{L^2-bound-for-gamma}:
\begin{eqnarray*}
\|\alpha_{\tau}^{\ast}\gamma_{\tau}\|^2_{L^2(B)} & = & \int_{B}\alpha_{\tau}^{\ast}\gamma_{\tau}\circ i\wedge\alpha^{\ast}_{\tau}\gamma_{\tau}\\
& = & \int_{B}\alpha_{\tau}^{\ast}(\gamma_{\tau}\circ j_{\tau})\wedge\alpha_{\tau}^{\ast}\gamma_{\tau}\\
& = & \int_{U_{\tau}}\gamma_{\tau}\circ j_{\tau}\wedge\gamma_{\tau}\\
& \le & \|u_0^{\ast}\lambda\|_{L^2,j_{\tau}}\\
& \le & C,
\end{eqnarray*}
where $C$ is a constant only depending on $\lambda$ and $u_0$ since $$\sup_{\tau}\|j_{\tau}\|_{L^{\infty}(\dot{S})}<\infty.$$
We write 
\[
\alpha^{\ast}_{\tau}\gamma_{\tau}=h_{\tau}^1\,ds+h_{\tau}^2\,dt
\]
where $h_{\tau}^k$, $k=1,2$ are harmonic and bounded in $L^2(B)$ independent of $\tau$. If $y\in B$ and $B_R(y)\subset\overline{B_R(y)}\subset B$ then the classical mean value theorem
\[
h^k_{\tau}(y)=\frac{1}{\pi R^2}\int_{B_R(y)}h_{\tau}^k(x)dx
\]
implies that for any ball $B_{\delta}=B_{\delta}(y)$ with $B_{\delta}\subset\overline{B_{\delta}}\subset B$ we have the rather generous estimate
\[
\|h^k_{\tau}\|_{C^0({B_{\delta}(y)})}\le\,\frac{1}{\sqrt{\pi}\,\delta}\,\|h^k_{\tau}\|_{L^2(B)}\le \,\frac{\sqrt{C}}{\sqrt{\pi}\,\delta}.
\]
With $y\in B$ and $\nu=(\nu_1,\nu_2)$ being the unit outer normal to $\partial B_{\delta}(y)$ we get
\begin{eqnarray*}
\pas h_{\tau}^k(y) & = & \frac{1}{\pi {\delta}^2}\int_{B_{\delta}(y)}\pas h_{\tau}^k(x)dx\\
& = & \frac{1}{\pi {\delta}^2}\int_{B_{\delta}(y)}\mbox{div}(h^k_{\tau},0)\,dx\\
& = & \frac{1}{\pi {\delta}^2}\int_{\partial B_{\delta}(y)}h^k_{\tau}\,\nu_1\,ds\\
\end{eqnarray*}
and
\begin{eqnarray*}
|\nabla h^k_{\tau}(y)| & = & \frac{1}{\pi {\delta}^2}\left|\int_{\partial B_{\delta}(y)}h^k_{\tau}\,\nu\,ds\right| \\
& \le & \frac{2}{\delta}\|h_{\tau}^k\|_{C^0(\overline{B_{\delta}(y)})}
\end{eqnarray*}
so that for $B'=B_{\varepsilon'}$, $r$ being the radius of $B$ and $\delta=r-\varepsilon'$ 
\[
\|\nabla h^k_{\tau}\|_{C^0(\overline{B'})} \le\frac{2\sqrt{C}}{\sqrt{\pi}\,\delta^2}.
\]
By iterating this procedure on nested balls we obtain $\tau$--uniform $C^0(\overline{B'})$ bounds on all derivatives. Convergence as stated in the proposition then follows from the Ascoli--Arzela theorem.
\qed

\subsection{A uniform $L^p$--bound for the gradient}

The first step of the regularity story is showing that the gradients of $a_{\tau}+if_{\tau}$ are uniformly bounded in $L^p(B')$ for some $p>2$ and for any ball $B'$ with $\overline{B'}\subset B$. It will become apparent later why this gradient bound is necessary. Since we do not have a lot to start with, the proof will be indirect.
Recall the differential equation (\ref{IFT-eq1}):
\[
\bar{\partial}_{j_{\tau}}(a_{\tau}+if_{\tau})=u_0^{\ast}\lambda\circ j_{\tau}-i(u_0^{\ast}\lambda)-\gamma_{\tau}-i(\gamma_{\tau}\circ j_{\tau}),
\]
where
\begin{eqnarray*}
 j_{\tau}(z)& = & (\pil Tu_0(z))^{-1}(T\phi_{f_{\tau}(z)}(u_0(z)))^{-1}\cdot  \\
 & & \cdot J(\phi_{f_{\tau}(z)}(u_0(z)))\,T\phi_{f_{\tau}(z)}(u_0(z))\pil Tu_0(z).
\end{eqnarray*}
We set
\[
\phi_{\tau}(z):=a_{\tau}(z)+i\,f_{\tau}(z)\ ,\ z\in U_{\tau}.
\]
so that for $z\in B$:
\begin{eqnarray}\label{transformed-PDE}
\overline{\partial}(\phi_{\tau}\circ\alpha_{\tau})(z) & = & \overline{\partial}_{j_{\tau}}\phi_{\tau}(\alpha_{\tau}(z))\circ\partial_s\alpha_{\tau}(z)\nonumber \\
 & = & (u_0^{\ast}\lambda\circ j_{\tau}-i(u_0^{\ast}\lambda))_{\alpha_{\tau}(z)}\circ\partial_s\alpha_{\tau}(z)-\nonumber\\
 & & -\left((\alpha_{\tau}^{\ast}\gamma_{\tau})(z)\cdot\frac{\partial}{\partial s}+i(\alpha_{\tau}^{\ast}\gamma_{\tau})(z)\cdot\frac{\partial}{\partial t}\right) \\
 &=:& \hat{F}_{\tau}(z)+\hat{G}_{\tau}(z)\nonumber\\
 &=:&\hat{H}_{\tau}(z),\nonumber
\end{eqnarray}
and 
\begin{equation}\label{L^p-bound-on-Ftau}
\sup_{\tau}\|\hat{F}_{\tau}\|_{L^p(B)}<\infty\ \mbox{for some $p>2$}
\end{equation}
since $\alpha_{\tau}\rightarrow\alpha_{\tau_0}$ in $W^{1,p}(B)$ and $\sup_{\tau}\|j_{\tau}\|_{L^{\infty}}<\infty$. We also have
\begin{equation}\label{L^p-bound-on-Gtau}
\sup_{\tau}\|\hat{G}_{\tau}\|_{C^k(B')}<\infty
\end{equation}
for any ball $B'\subset\overline{B'}\subset B$ and any integer $k\ge 0$ in view of proposition \ref{convergence-of-gammas} (the proposition asserts uniform convergence after passing to a suitable subsequence, but uniform bounds on all derivatives are established in the proof). We claim now that for every ball $B'\subset\overline{B'}\subset B$ there is a constant $C_{B'}>0$ such that
\begin{equation}\label{gradient-bound}
\|\nabla(\phi_{\tau}\circ \alpha_{\tau})\|_{L^p(B')}\,\le\,C_{B'}\ \forall\ \tau\in[0,\tau_0).
\end{equation}
Arguing indirectly, we may assume that there is a sequence $\tau_k\nearrow {\tau_0}$ such that
\begin{equation}\label{stuff-blows-up}
\|\nabla(\phi_{\tau_k}\circ \alpha_{\tau_k})\|_{L^p(B')}\rightarrow \infty\ \mbox{for some ball}\ B'\subset\overline{B'}\subset B.
\end{equation}
Define now
\[
\ve_k:=\inf\{\ve>0\,|\,\exists\, x\in B'\,:\,\|\nabla(\phi_{\tau_k}\circ\alpha_{\tau_k})\|_{L^p(B_{\ve}(x))}\ge\ve^{\frac{2}{p}-1}\}
\]
which are positive numbers since $\varepsilon^{\frac{2}{p}-1}\rightarrow+\infty$. Because we assumed (\ref{stuff-blows-up}) we must have $\inf_k\ve_k=0$, hence we will assume that $\ve_k\rightarrow 0$. Otherwise, if $\ve_0=\frac{1}{2}\inf_k\ve_k>0$ then we cover $\overline{B'}$ with finitely many balls of radius $\ve_0$, and we would get a $k$--uniform $L^p$--bound on each of them contradicting (\ref{stuff-blows-up}). We claim that
\begin{equation}\label{first-est}
\|\nabla(\phi_{\tau_k}\circ\alpha_{\tau_k})\|_{L^p(B_{\ve_k}(x))}\le\ve_k^{\frac{2}{p}-1}\ \forall\ x\in B'.
\end{equation}
Otherwise we could find $y\in B'$ so that
\[
\|\nabla(\phi_{\tau_k}\circ\alpha_{\tau_k})\|_{L^p(B_{\ve_k}(y))}>\ve_k^{\frac{2}{p}-1},
\]
and we would still have the same inequality for a slightly smaller $\ve_k'<\ve_k$ contradicting the definition of $\ve_k$. We now claim that there is a point $x_k\in B'$ with
\begin{equation}\label{second-est}
\|\nabla(\phi_{\tau_k}\circ\alpha_{\tau_k})\|_{L^p(B_{\ve_k}(x_k))}=\ve_k^{\frac{2}{p}-1}.
\end{equation}
Indeed, pick sequences $\delta_l\searrow\ve_k$ and $y_l\in B'$ so that
\[
\|\nabla(\phi_{\tau_k}\circ\alpha_{\tau_k})\|_{L^p(B_{\delta_l}(y_l))}\ge \delta_l^{\frac{2}{p}-1}.
\]
We may assume that the sequence $(y_l)$ converges. Denoting its limit by $x_k$, we obtain
\[
\|\nabla(\phi_{\tau_k}\circ\alpha_{\tau_k})\|_{L^p(B_{\ve_k}(x_k))}\ge\ve_k^{\frac{2}{p}-1}
\]
and (\ref{second-est}) follows from (\ref{first-est}). Hence there is a sequence $(x_k)\subset B'$ for which (\ref{second-est}) holds. We may assume that the sequence $(x_k)\subset B'$ converges and without loss of generality also that $\lim_{k\rightarrow\infty}x_k=0$. Let $R>0$, and we define for $z\in B_R(0)$ the functions 
\[
\xi_k(z):=(\phi_{\tau_k}\circ\alpha_{\tau_k})(x_k+\ve_k(z-x_k))
\]
which makes sense if $k$ is sufficiently large. The transformation 
\[
\Phi:x\longmapsto x_k+\ve_k(x-x_k)
\]
satisfies $\Phi(B_1(x_k))=B_{\ve_k}(x_k)$ and $\Phi(B_1(y))\subset B_{\ve_k}(x_k+\ve_k(y-x_k))$ so that
\begin{eqnarray*}
 \int_{B_{\varepsilon_k}(x_k)}|\nabla(\phi_{\tau_k}\circ \alpha_{\tau_k})(x)|^pdx & = & \varepsilon_k^2\int_{B_1(x_k)}|\nabla(\phi_{\tau_k}\circ\alpha_{\tau_k})(x_k+\varepsilon_k(z-x_k))|^pdz\\
& = & \varepsilon_k^2\int_{B_1(x_k)}\varepsilon_k^{-p}|\nabla\xi_k(z)|^pdz
\end{eqnarray*}
and
\begin{eqnarray}\label{third-est}
\|\nabla \xi_k\|_{L^p(B_{1}(x_k))} & = &  \ve_k^{1-\frac{2}{p}}\|\nabla(\phi_{\tau_k}\circ\alpha_{\tau_k})\|_{L^p(B_{\ve_k}(x_k))}\\
 &=& 1\ \mbox{by equation}\ (\ref{second-est})\nonumber
\end{eqnarray}
and for any $y$ for which $\xi_k|_{B_1(y)}$ is defined and large enough $k$
\begin{equation}\label{another-est}
\|\nabla\xi_k\|_{L^p(B_1(y))}\le\ve_k^{1-\frac{2}{p}}\|\nabla(\phi_{\tau_k}\circ\alpha_{\tau_k})\|_{L^p(B_{\ve_k}(x_k+\ve_k(y-x_k)))}\le 1
\end{equation}
by (\ref{first-est}). The functions $\xi_k$ satisfy the equation
\begin{equation}\label{PDE-for-xi}
\bar{\partial}\xi_k(z)=\ve_k\hat{F}_{\tau_k}(x_k+\ve_k(z-x_k))+\ve_k\hat{G}_{\tau_k}(x_k+\ve_k(z-x_k))=:H_{\tau_k}(z)
\end{equation}
and for every $R>0$
\begin{equation}\label{grad-inequalities}
\sup_k\|\nabla\xi_k\|_{L^p(B_R(0))}<\infty\ ,\ \|\nabla\xi_k\|_{L^p(B_2(0))}\ge 1
\end{equation}
because of (\ref{another-est}) and (\ref{third-est}) since $B_1(x_k)\subset B_2(0)$ for large $k$. The upper bound on $\|\nabla\xi_k\|_{L^p(B_R(0))}$ depends on how many balls $B_1(y)$ are needed to cover $B_R(0)$. We compute for $\rho>0$
\[
\|H_{\tau_k}\|_{L^p(B_\rho(x_k))}=\ve_k^{1-\frac{2}{p}}\|\hat{H}_{\tau_k}\|_{L^p(B_{\rho\ve_k}(x_k))},
\]
with $\hat{H}_{\tau_k}$ as in (\ref{transformed-PDE}). We conclude from, $p>2$, (\ref{L^p-bound-on-Ftau}) and (\ref{L^p-bound-on-Gtau}) that 
\[
 \|H_{\tau_k}\|_{L^p(B_R(0))}\longrightarrow 0
\]
for any $R>0$ as $k\rightarrow\infty$. Defining
\[
X^{l,p}:=\{\psi\in W^{l,p}(B,\C^2)\,|\,\psi(0)=0\,,\,\psi(\partial B)\subset\R^2\}\ ,\ l\ge 1,\ B\subset{\C}\ \mbox{a ball}
\]
the Cauchy-Riemann operator
\[
\bar{\partial}:X^{l,p}\longrightarrow W^{l-1,p}(B,\C^2)
\]
is a bounded bijective linear map. By the open mapping principle we  have the following estimate:
\begin{equation}\label{regularity-for-dell-bar}
\|\psi\|_{l,p,B}\le\,C\,\|\overline{\partial}\psi\|_{l-1,p,B}\ \forall\ \psi\in X^{l,p}.
\end{equation}
Let $R'\in(0,R)$. Pick now a smooth function $\beta:{\R}^2\rightarrow[0,1]$ with $\beta|_{B_{R'}(0)}\equiv 1$ and $\mbox{supp}(\beta)\subset B_R(0)$. Define
\[
\zeta_k(z):=\,\mbox{Re}(\xi_k(z)-\xi_k(0))+i\beta(z)\,\mbox{Im}(\xi_k(z)-\xi_k(0)).
\]
We note that 
\[
\sup_k\|\,\mbox{Im}(\xi_k)\|_{L^p(B_R(0))}\le C_R
\]
with a suitable constant $C_R>0$ because of the uniform bound
\[
\sup_{k,R}\|\,\mbox{Im}(\xi_k)\|_{L^{\infty}(B_R(0))}<\infty.
\]
Using (\ref{regularity-for-dell-bar}) we then obtain
\begin{eqnarray}\label{regularity-for-xi}
\|\xi_k-\xi_k(0)\|_{1,p,B_{R'}(0)} & \le & \|\zeta_k\|_{1,p,B_R(0)}\nonumber\\
 & \le & C\,\|\overline{\partial}\zeta_k\|_{L^p(B_R(0))}\nonumber\\
 & \le & C_{R'}\Big(\|H_{\tau_k}\|_{L^p(B_R(0))}+\|\nabla\xi_k\|_{L^p(B_R(0))}+\\
 & & +\|\,\mbox{Im}(\xi_k)-\,\mbox{Im}(\xi_k)(0)\|_{L^p(B_R(0))}\Big)\nonumber
\end{eqnarray}
because of
\[
\overline{\partial}\zeta_k=H_{\tau_k}+i(\beta-1)\overline{\partial}(\,\mbox{Im}(\xi_k))+i\,\overline{\partial}\beta\,\Big(\mbox{Im}(\xi_k)-\mbox{Im}(\xi_k)(0)\Big).
\]
Hence the sequence $(\xi_k-\xi_k(0))$ is uniformly bounded in $W^{1,p}(B_{R'}(0))$, in particular it has a subsequence which converges in $C^{\alpha}(\overline{B_{R'}(0)})$ for $0<\alpha< 1-\frac{2}{p}$ and also in $L^p(B_{R'}(0))$. For $R''\in(0,R')$ we use now the regularity estimate
\begin{eqnarray}\label{show-1-p-convergence}
\|\xi_l-\xi_k-(\xi_l-\xi_k)(0)\|_{1,p,B_{R''}(0)}&\le &\,c\,\|H_{\tau_l}-H_{\tau_k}\|_{L^p(B_{R'}(0))}+\nonumber \\
 & & +c\,\|\xi_l-\xi_k-(\xi_l-\xi_k)(0)\|_{L^p(B_{R'}(0))},
\end{eqnarray}
where $c=c(p,R',R'')>0$. This follows from (\ref{regularity-for-dell-bar}) applied to $\psi=\beta(\xi_l-\xi_k-(\xi_l-\xi_k)(0))$ where $\beta$ is a smooth cut-off function with support in $B_{R'}$ and $\beta\equiv 1$ on $B_{R''}$. We may then assume that the right hand side of (\ref{show-1-p-convergence}) converges to zero as $k,l\rightarrow\infty$. This argument can be carried out for any triple $0<R''<R'<R$. Hence the sequence $\xi_k-\xi_k(0)$ converges in $W^{1,p}_{loc}({\mathbb C})$ to some limit $\xi:{\C}\rightarrow {\C}$ which solves $\bar{\partial}\xi=0$ in the sense of distributions. Therefore it is an entire holomorphic function. Because the imaginary parts of $\xi_k$ are uniformly bounded this also applies to $\,\mbox{Im}(\xi)$. Liouville's theorem for harmonic functions then implies that $\,\mbox{Im}(\xi)$ must be constant, hence $\xi$ is constant as well. On the other hand, $\xi$ can not be constant since it satisfies $\|\nabla\xi\|_{L^p(B_2(0))}\ge 1$. This contradiction finally disproves our assertion (\ref{stuff-blows-up}). We summarize:

\begin{proposition}\label{lp-bound-for-gradient}
For every ball $B'$ with $\overline{B'}\subset B$ we have
\[
\sup_{\tau}\|\nabla(\phi_{\tau}\circ\alpha_{\tau})\|_{L^p(B')}<\infty.
\]
\end{proposition}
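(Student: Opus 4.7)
The plan is to prove the $L^p$-bound by contradiction using a rescaling/blow-up argument, exploiting the structure of the transformed PDE \eqref{transformed-PDE} together with the uniform $L^\infty$-bound \eqref{Linfty-bound} on $f_\tau$ and the uniform $C^k$-bound on $\hat G_\tau$ coming from Proposition \ref{convergence-of-gammas}, as well as the $L^p$-bound \eqref{L^p-bound-on-Ftau} on $\hat F_\tau$. Suppose the conclusion fails: there exist a ball $B'$ with $\overline{B'}\subset B$ and a sequence $\tau_k \nearrow \tau_0$ such that $\|\nabla(\phi_{\tau_k}\circ\alpha_{\tau_k})\|_{L^p(B')} \to \infty$.

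The first step is to extract a natural blow-up scale. I would define
\[
\varepsilon_k := \inf\{\varepsilon>0\,|\,\exists\, x\in B'\,:\,\|\nabla(\phi_{\tau_k}\circ\alpha_{\tau_k})\|_{L^p(B_\varepsilon(x))}\ge\varepsilon^{2/p-1}\}
\]
and observe (by a covering argument using the failure of the claim) that $\varepsilon_k \searrow 0$. A standard continuity argument produces points $x_k \in B'$ (which I may assume converge to $0$ after a harmless translation) achieving equality $\|\nabla(\phi_{\tau_k}\circ\alpha_{\tau_k})\|_{L^p(B_{\varepsilon_k}(x_k))} = \varepsilon_k^{2/p-1}$, with the corresponding upper bound on every ball of radius $\varepsilon_k$ inside $B'$.

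Next I rescale, setting $\xi_k(z) := (\phi_{\tau_k}\circ\alpha_{\tau_k})(x_k + \varepsilon_k(z-x_k))$. A change-of-variables computation gives the uniform local $L^p$ gradient bound $\sup_k \|\nabla\xi_k\|_{L^p(B_R(0))} < \infty$ for every $R$, together with the lower bound $\|\nabla\xi_k\|_{L^p(B_2(0))} \ge 1$. The rescaled PDE is $\bar\partial \xi_k = H_{\tau_k}$ with $\|H_{\tau_k}\|_{L^p(B_R(0))} = \varepsilon_k^{1-2/p}\|\hat H_{\tau_k}\|_{L^p(B_{R\varepsilon_k}(x_k))}$, which tends to zero as $k\to\infty$ since $\hat F_{\tau_k}$ is $L^p$-bounded, $\hat G_{\tau_k}$ is uniformly bounded pointwise, and $p>2$ makes $\varepsilon_k^{1-2/p}$ small while the $L^p$-norm over a shrinking ball tends to zero.

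The final step is to pass to a limit and derive a contradiction. Using the open-mapping estimate \eqref{regularity-for-dell-bar} for $\bar\partial$ on balls, combined with the uniform $L^\infty$-bound on $\operatorname{Im}\xi_k = f_{\tau_k}\circ\alpha_{\tau_k}\circ(\cdot)$ inherited from \eqref{Linfty-bound}, I localize with a cut-off (as in \eqref{regularity-for-xi}) to bound $\xi_k - \xi_k(0)$ uniformly in $W^{1,p}_{loc}(\mathbb{C})$. A Cauchy-style estimate of the form \eqref{show-1-p-convergence} applied to differences $\xi_l - \xi_k$ then upgrades this to convergence in $W^{1,p}_{loc}(\mathbb{C})$, after passing to a subsequence, to some limit $\xi$ satisfying $\bar\partial\xi = 0$ on all of $\mathbb{C}$. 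Then $\xi$ is entire holomorphic with bounded imaginary part, so by Liouville applied to $\operatorname{Im}\xi$ (a bounded harmonic function on the plane) the imaginary part is constant; holomorphicity then forces $\xi$ itself to be constant, contradicting $\|\nabla\xi\|_{L^p(B_2(0))}\ge 1$. The main obstacle I anticipate is ensuring that the inhomogeneous term $H_{\tau_k}$ is genuinely small after rescaling; this is where the exact choice of scaling exponent $2/p-1$ (so the $L^p$-norm is scale-invariant) and the assumption $p>2$ together with the uniform control on both $\hat F_\tau$ and $\hat G_\tau$ are essential.
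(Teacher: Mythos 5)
Your proposal is correct and follows essentially the same blow-up argument as the paper: the same definition of the scale $\varepsilon_k$ with the exponent $2/p-1$, the same rescaled functions $\xi_k$ with $\bar\partial\xi_k = H_{\tau_k}\to 0$ in $L^p$, the same cut-off localization using the $L^\infty$-bound on $\operatorname{Im}\xi_k$ to get $W^{1,p}_{loc}$ convergence, and the same Liouville contradiction. No substantive difference from the paper's proof.
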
\qed\\

\begin{remark}
After establishing the estimates (\ref{grad-inequalities}) for $\nabla\xi_k$ we could have derived a $k$--uniform $W^{1,p}$--bound for $\xi_k$ minus its average $\overline{\xi_k}$ over the ball $B_R(0)$ via Poincar\'e's inequality. We could have derived $W^{1,p}(B_R(0))$ convergence of $\xi_k-\overline{\xi_k}$, but not convergence in $W^{1,p}_{loc}(\C)$ since the sequence $(\xi_k-\overline{\xi_k})$ depends on the choice of the ball $B_R(0)$. Our sequence $\xi_k-\xi_k(0)$ has a convergent subsequence on any ball.
\end{remark}

\subsection{Convergence in $W^{1,p}(B')$}

Pick a sequence $\tau_k\nearrow \tau_0$. We claim that the sequence $(\hat{F}_{\tau_k})$ converges in $L^p(B)$ maybe after passing to a suitable subsequence (recall that so far we only have the uniform bound (\ref{L^p-bound-on-Ftau})). The functions $\hat{F}_{\tau_k}$ converge pointwise almost everywhere after passing to some subsequence: Indeed, the sequence $\{(u_0^{\ast}\lambda\circ j_{\tau_k}-i(u_0^{\ast}\lambda))_{\alpha_{\tau_k}(z)}\}$ converges already pointwise since $j_{\tau_k}$ and $\alpha_{\tau_k}$ do (recall that the sequence $(\alpha_{\tau_k})$ converges in $W^{1,p}(B)$ and therefore uniformly). The sequence $(\pas\alpha_{\tau_k})$ converges in $L^p(B)$ and therefore pointwise almost everywhere after passing to a suitable subsequence. Then, by Egorov's theorem, for any $\delta>0$ there is a subset $E_{\delta}\subset B$ with $|B\backslash E_{\delta}|\le\delta$ so that the sequence $\hat{F}_{\tau_k}$ converges uniformly on $E_{\delta}$. Let $\alpha$ be the $L^p$-limit of the sequence $(\pas\alpha_{\tau_k})$, and let $\varepsilon>0$. We introduce 
\[
C:=2\sup_{0\le \tau<\tau_0}\|(u_0^{\ast}\lambda\circ j_{\tau}-i(u_0^{\ast}\lambda))_{\alpha_{\tau}(z)}\|_{L^{\infty}(B)}. 
\]
Pick now $\delta>0$ sufficiently small such that 
\[
 \|\alpha\|_{L^p(B\backslash E_{\delta})}\,\le\,\frac{\varepsilon}{3\,C}.
\]
Choose now $k_0\ge 0$ so large that for all $k\ge k_0$
\[
 \|\pas\alpha_{\tau_k}-\alpha\|_{L^p(B)}\le\frac{\varepsilon}{3\,C}\ \mbox{and}\ \|\hat{F}_{\tau_k}-\hat{F}_{\tau_l}\|_{L^{\infty}(E_{\delta})}\le\frac{\varepsilon}{3\,|B|}.
\]
Then, if $k,l\ge k_0$,
\begin{eqnarray*}
\|\hat{F}_{\tau_k}-\hat{F}_{\tau_l}\|_{L^p(B)} & \le & \|\hat{F}_{\tau_k}-\hat{F}_{\tau_l}\|_{L^p(E_{\delta})}+\|\hat{F}_{\tau_k}-\hat{F}_{\tau_l}\|_{L^p(B\backslash E_{\delta})}\\
& \le & |E_{\delta}|\,\|\hat{F}_{\tau_k}-\hat{F}_{\tau_l}\|_{L^{\infty}(E_{\delta})}+2\sup_{k\ge k_0}\|\hat{F}_{\tau_k}\|_{L^p(B\backslash E_{\delta})}\\
&\le &|B|\,\|\hat{F}_{\tau_k}-\hat{F}_{\tau_l}\|^p_{L^{\infty}(E_{\delta})}+C\cdot\sup_{k\ge k_0}\|\partial_s\alpha_{\tau_k}\|_{L^p(B\backslash E_{\delta})}\\
& \le & \varepsilon
\end{eqnarray*}
proving the claim.\\ 

Recalling that $\phi_{\tau}=a_{\tau}+if_{\tau}$ and that the family $f_{\tau}$ satisfies a uniform $L^{\infty}$--bound we have
\[
\sup_{\tau}\|\,\mbox{Im}(\phi_{\tau}\circ\alpha_{\tau})\|_{L^{\infty}(B)}<\infty.
\]
Pick now three balls $B'''\subset B''\subset B'\subset B$ such that the closure of one is contained in the next. Our aim is to establish $W^{1,p}(B''')$--convergence of a subsequence of the sequence $(\phi_{\tau_k}\circ\alpha_{\tau_k})$. By proposition \ref{lp-bound-for-gradient} we have a uniform $L^p(B')$--bound on the gradient. If $\beta:{\R}^2\rightarrow[0,1]$ is a smooth function with $\,\mbox{supp}\,(\beta)\subset B'$ and $\beta|_{B''}\equiv 1$ and if 
\[
\zeta_{\tau}=\,\mbox{Re}(\phi_{\tau}\circ\alpha_{\tau}-\phi_{\tau}(0))+i\beta\,\mbox{Im}(\phi_{\tau}\circ\alpha_{\tau}-\phi_{\tau}(0))
\]
then we proceed in the same way as in (\ref{regularity-for-xi}), and we obtain
\[
\|\varphi_k\|_{1,p,B''}\le\,C\,\Big(\|\hat{H}_{\tau_k}\|_{L^p(B')}+\|\nabla(\phi_{\tau_k}\circ\alpha_{\tau_k})\|_{L^p(B')}+\|\;\mbox{Im}(\varphi_k)\|_{L^p(B')}\Big)
\]
where we wrote
\[
\varphi_k:=\phi_{\tau_k}\circ\alpha_{\tau_k}-(\phi_{\tau_k}\circ\alpha_{\tau_k})(0),
\]
and where $C>0$ is a constant only depending on $p, B'$ and $B''$. The sequence $(\varphi_k)$ is then uniformly bounded in $W^{1,p}(B'')$ by proposition \ref{lp-bound-for-gradient}, and it converges in $L^p(B'')$ after passing to a suitable subsequence. We now use the regularity estimate
\begin{eqnarray}\label{CR-regularity-2}
\|\varphi_l-\varphi_k\|_{1,p,B'''}&\le &C\,\Big(\|\hat{F}_{\tau_l}-\hat{F}_{\tau_k}\|_{L^p(B'')}+\\
& & +\|\hat{G}_{\tau_l}-\hat{G}_{\tau_k}\|_{L^p(B'')}+\|\varphi_l-\varphi_k\|_{L^p(B'')}\Big).\nonumber
\end{eqnarray}
Since the right hand side converges to zero as $k,l\rightarrow\infty$ we obtain
\begin{proposition}\label{W1p-loc-convergence}
For every ball $B'\subset\overline{B'}\subset B$ the sequence $(\phi_{\tau_k}\circ\alpha_{\tau_k}-\phi_{\tau_k}(0))$ has a subsequence which converges in $W^{1,p}(B')$.
\end{proposition}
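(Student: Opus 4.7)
The proof strategy is essentially already assembled in the paragraphs leading up to the statement; my plan is to combine those ingredients carefully by choosing three nested balls and applying the $\bar\partial$-regularity estimate (\ref{regularity-for-dell-bar}) twice.

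First I would fix nested balls $B''' \subset \overline{B'''} \subset B'' \subset \overline{B''} \subset B' \subset \overline{B'} \subset B$ and a smooth cutoff $\beta$ with support in $B'$ and $\beta \equiv 1$ on $B''$. Setting $\varphi_k := \phi_{\tau_k}\circ\alpha_{\tau_k} - (\phi_{\tau_k}\circ\alpha_{\tau_k})(0)$, I would form $\zeta_{\tau_k} := \mathrm{Re}(\varphi_k) + i\beta\,\mathrm{Im}(\varphi_k)$, which lies in $X^{1,p}$ with $B$ replaced by $B'$ because its imaginary part vanishes on $\partial B'$ and $\zeta_{\tau_k}(0) = 0$. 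Computing $\bar\partial\zeta_{\tau_k} = \hat H_{\tau_k} + i(\beta-1)\bar\partial\,\mathrm{Im}(\varphi_k) + i(\bar\partial\beta)\,\mathrm{Im}(\varphi_k)$ and invoking (\ref{regularity-for-dell-bar}) produces the estimate
\[
\|\varphi_k\|_{1,p,B''} \le C\bigl(\|\hat H_{\tau_k}\|_{L^p(B')} + \|\nabla(\phi_{\tau_k}\circ\alpha_{\tau_k})\|_{L^p(B')} + \|\mathrm{Im}(\varphi_k)\|_{L^p(B')}\bigr).
\]
Proposition \ref{lp-bound-for-gradient} bounds the gradient term uniformly, the uniform $L^\infty$ bound on $\mathrm{Im}(\phi_\tau\circ\alpha_\tau)$ (coming from (\ref{Linfty-bound}) and $\varphi_k(0)=0$) bounds the last term, and the bounds (\ref{L^p-bound-on-Ftau}) and (\ref{L^p-bound-on-Gtau}) handle $\hat H_{\tau_k}$. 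Hence $(\varphi_k)$ is uniformly bounded in $W^{1,p}(B'')$, and by the Rellich--Kondrachov theorem a subsequence (again denoted $(\varphi_k)$) converges in $L^p(B'')$.

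Next, to upgrade to $W^{1,p}$ convergence on $B'''$, I would repeat the cutoff construction with a new $\beta'$ supported in $B''$ and equal to $1$ on $B'''$, applied this time to the difference $\varphi_l - \varphi_k$. The analogous computation gives
\[
\|\varphi_l - \varphi_k\|_{1,p,B'''} \le C\bigl(\|\hat F_{\tau_l} - \hat F_{\tau_k}\|_{L^p(B'')} + \|\hat G_{\tau_l} - \hat G_{\tau_k}\|_{L^p(B'')} + \|\varphi_l - \varphi_k\|_{L^p(B'')}\bigr),
\]
which is exactly the estimate (\ref{CR-regularity-2}). Convergence of the last term was just established. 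Convergence of $\hat G_{\tau_k}$ in $C^\infty(B'')$, hence in $L^p(B'')$, comes after a further subsequence from Proposition \ref{convergence-of-gammas}. Finally, $L^p(B)$-convergence of $\hat F_{\tau_k}$ along a subsequence was proven in the paragraph immediately before the proposition statement via Egorov's theorem combined with the pointwise convergence of $j_{\tau_k}$, $\alpha_{\tau_k}$ and the $L^p$-convergence of $\partial_s\alpha_{\tau_k}$. After extracting a common subsequence for which all three terms on the right go to zero, $(\varphi_k)$ is Cauchy in $W^{1,p}(B''')$, yielding the desired convergence.

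The argument has no genuinely hard step, since the a priori gradient bound (Proposition \ref{lp-bound-for-gradient}), the $L^p$-convergence of $\hat F_{\tau_k}$, and the $C^\infty$-convergence of $\hat G_{\tau_k}$ have all been done. The only care required lies in the boundary treatment: the Cauchy--Riemann regularity estimate (\ref{regularity-for-dell-bar}) is stated for functions in $X^{l,p}$, whose imaginary part must vanish on the boundary, so one cannot cut off the real part (which need not have small boundary values) and must use the hybrid cutoff already employed in (\ref{regularity-for-xi}). Since $B'''$ was an arbitrary ball whose closure lies in $B$, this completes the proof.
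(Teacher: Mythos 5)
Your proof is correct and takes essentially the same route as the paper: the same hybrid cutoff trick (cut off only the imaginary part, since only it is uniformly bounded and has to vanish on the boundary to land in $X^{l,p}$), the same two-stage application of the $\bar\partial$-regularity estimate (first to get a uniform $W^{1,p}(B'')$ bound, then to the difference $\varphi_l - \varphi_k$ to upgrade $L^p$-convergence to $W^{1,p}$-convergence on a smaller ball), and the same use of Proposition \ref{lp-bound-for-gradient}, the $L^\infty$-bound on $f_\tau$, and the $L^p$/$C^\infty$ convergence of $\hat F_{\tau_k}$, $\hat G_{\tau_k}$. (Note $\alpha_{\tau_k}(0)=0$, so your normalization $\varphi_k = \phi_{\tau_k}\circ\alpha_{\tau_k} - (\phi_{\tau_k}\circ\alpha_{\tau_k})(0)$ coincides with the $\phi_{\tau_k}(0)$ normalization in the statement.)
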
\qed

\subsection{Improving regularity using both the Beltrami and the Cauchy-Riemann equations}

In order to improve the convergence of the conformal transformations $\alpha_{\tau_k}$ we need to improve the convergence of the maps $\mu_{\tau_k}\rightarrow \mu_{\tau_0}$ and the regularity of its limit. It is known that the inverses $\alpha^{-1}_{\tau_k}$ of the conformal transformations $\alpha_{\tau_k}$ also satisfy a Beltrami equation \cite{Bers-paper}
\[
\overline{\partial}\alpha_{\tau}^{-1}=\nu_{\tau}\,\partial\alpha^{-1}_{\tau}
\]
where
\[
\nu_{\tau}(z)=-\frac{\partial\alpha_{\tau}(\alpha^{-1}_{\tau}(z))}{\overline{\partial}\overline{\alpha}_{\tau}(\alpha^{-1}_{\tau}(z))}\mu_{\tau}(\alpha^{-1}_{\tau}(z))
\]
(follows from $0=\overline{\partial}(\alpha^{-1}_{\tau}\circ\alpha_{\tau})=\overline{\partial}\alpha^{-1}_{\tau}(\alpha_{\tau})\overline{\partial}\overline{\alpha}_{\tau}+\partial\alpha^{-1}_{\tau}(\alpha_{\tau})\overline{\partial}\alpha_{\tau}$).
After passing to a suitable subsequence we may assume that $\partial\alpha_{\tau_k}$ and $\overline{\partial}\alpha_{\tau_k}$ converge pointwise almost everywhere since they converge in $L^p(B)$. Hence we may assume that the sequence $(\nu_{\tau_k})$ also converges pointwise almost everywhere. We also have
\[
|\nu_{\tau}(z)|\le|\mu_{\tau}(\alpha^{-1}(z))|,
\]
hence $\nu_{\tau}$ satisfies the same $L^{\infty}$--bound as $\mu_{\tau}$. By lemma \ref{lemma-6.4} we conclude that 
\[
\alpha_{\tau_k}^{-1}\longrightarrow\alpha^{-1}_{1}\ \mbox{in}\ W^{1,p}(B)
\]
with the same $p>2$ as in lemma \ref{lemma-6.4} applied to the functions $\mu_{\tau}$. After passing to some subsequence, the sequence 
\[
(\varphi_k):=(\phi_{\tau_k}-a_{\tau_k}(0))\circ\alpha_{\tau_k}
\]
converges in $W^{1,p}(B')$ for any ball $\overline{B'}\subset B$ by proposition \ref{W1p-loc-convergence}. Indeed, the expression $\phi_{\tau_k}\circ\alpha_{\tau_k}-\phi_{\tau_k}(0)$ and $\varphi_k$ differ by a constant term $if_{\tau_k}(0)$, but the sequence $(if_{\tau_k}(0))$ has a convergent subsequence.\\
We would like to derive a decent notion of convergence for the sequence $(\varphi_{\tau_k}\circ\alpha^{-1}_{\tau_k})$, but the space $W^{1,p}$ is not well--behaved under compositions. The composition of two functions of class $W^{1,p}$ is only in $W^{1,p/2}$. Since we can not choose $p>2$ freely we rather carry out the argument in H\"{o}lder spaces. By the Sobolev embedding theorem and Rellich compactness we may assume that the sequences $(\varphi_k)$ and $(\alpha_{\tau_k}^{-1})$ converge in $C^{0,\alpha}(B')$ for any ball $B'\subset\overline{B'}\subset B$ and $0<\alpha\le 1-\frac{2}{p}$. We conclude from the inequality
\[
\|f\circ g\|_{C^{0,\gamma\delta}(B')}\le\|f\|_{C^{0,\gamma}(B')}\|g\|_{C^{0,\delta}(B')}\,\forall\,f\in C^{0,\gamma}(B')\,,\,g\in C^{0,\delta}(B')\]
where $0<\gamma,\delta\le 1 $ that the sequence $(\phi_{\tau_k}-a_{\tau_k}(0))$ converges in $C^{0,\alpha^2}(B')$. In particular, any sequence $(f_{\tau_k})$, $\tau_k\nearrow \tau_0$, now converges in the $C^{0,\alpha^2}$--norm to $f_{\tau_0}$. H\"{o}lder spaces are well--behaved with respect to multiplication, i.e.
\[
\|fg\|_{C^{0,\gamma}(B')}\le\,2\,\|f\|_{C^{0,\gamma}(B')}\|g\|_{C^{0,\gamma}(B')},
\] 
and composition with a fixed smooth function maps $C^{0,\gamma}(B')$ into itself. It then follows from the definition of the complex structure $j_{\tau}$ and from the definition of $\mu_{\tau}$ that $\mu_{\tau_k}\rightarrow \mu_{\tau_0}$ in the $C^{0,\alpha^2}$--norm as well. We conclude from theorem \ref{classical-Bers}, the classical regularity result for the Beltrami equation, that $\alpha_{\tau_k}\rightarrow\alpha_{\tau_0}$ in the $C^{1,\alpha^2}$--norm. The regularity estimate for the Cauchy-Riemann operator (\ref{CR-regularity-2}) is also valid in H\"{o}lder spaces, i.e.
\begin{eqnarray*}
\|\varphi_l-\varphi_k\|_{C^{k+1,\gamma}(B''')}&\le &C\,\Big(\|\hat{F}_{\tau_l}-\hat{F}_{\tau_k}\|_{C^{k,\gamma}(B'')}+\\
& & +\|\hat{G}_{\tau_l}-\hat{G}_{\tau_k}\|_{C^{k,\gamma}(B'')}+\|\varphi_l-\varphi_k\|_{C^{k,\gamma}(B'')}\Big).\nonumber
\end{eqnarray*}
The sequence $(\hat{F}_{\tau_k})$ now converges in the $C^{0,\alpha^2}$--norm, and the sequence $(\hat{G}_{\tau_k})$ converges in any H\"{o}lder norm. We obtain with the above regularity estimate $C^{1,\alpha^2}$--convergence of the sequence $(\varphi_k)$, and composing with $\alpha_{\tau_k}^{-1}$ yields $C^{1,\alpha^4}$--convergence of $(f_{\tau_k})$ and $(\mu_{\tau_l})$. Invoking theorem \ref{classical-Bers} again then improves the convergence of the transformations $\alpha_{\tau_k}\,,\,\alpha_{\tau_k}^{-1}$ to $C^{2,\alpha^4}$. We now iterate the procedure using the regularity estimate for the Cauchy-Riemann operator in H\"{o}lder space and the estimate for the Beltrami equation in theorem \ref{classical-Bers}.\\ 

Theorem \ref{compactness-result} follows if we apply the Implicit Function theorem to the limit solution $(S,j_{\tau_0},\tilde{u}_{\tau_0}=(a_{\tau_0},u_{\tau_0}),\gamma_{\tau_0})$, hence we obtain the same limit for every sequence $\{\tau_k\}$, and we obtain convergence in $C^{\infty}$. 

\section{Conclusion}
The following remarks tie together the loose ends and prove the main result, theorem \ref{main-thm}. We start with a closed three dimensional manifold with contact form $\lambda'$. Giroux's theorem, theorem \ref{giroux}, then permits us to change the contact form $\lambda'$ to another contact form $\lambda$ such that $\ker\lambda=\ker\lambda'$ and such that there is a supporting open book decomposition with binding $K$ consisting of periodic orbits of the Reeb vector field of $\lambda$. Invoking proposition \ref{localnormalform} we construct a family of 1-forms $(\lambda_{\delta})_{0\le\delta<1}$ which are contact forms except $\lambda_0$, and the above open book supports $\ker\lambda_{\delta}$ as well if $\delta\neq 0$. By the uniqueness part of Giroux's theorem, $(M,\ker\lambda)$ and $(M,\ker\lambda_{\delta})$ are diffeomorphic for $\delta\neq 0$, hence we may assume without loss of generality that $\lambda=\lambda_{\delta}$. Proposition \ref{first-solution} then permits us to turn the Giroux leaves into holomorphic curves for data associated with the confoliation form $\lambda_0$. Picking one Giroux leaf, the implicit function theorem, theorem \ref{IFT}, then allows us to deform it into a solutions to our PDE (\ref{eq1}) for small $\delta\neq 0$. Leaving such a parameter $\delta$ fixed from now, and denoting the corresponding solution by $(\tilde{u}_0,\gamma_0,j_0)$, theorem \ref{IFT} then delivers more solutions $(\tilde{u}_{\tau},\gamma_{\tau},j_{\tau})_{0\le\tau<\tau_0}$. The leaves $u_{\tau}(\dot{S})$ are all global surfaces of section recalling that they are of the form $u_{\tau}=\phi_{f_{\tau}}(u_0)$. Theorem \ref{IFT} also implies that $f_{\tau}<f_{\tau'}$ if $\tau<\tau'$. The compactness result, theorem \ref{compactness-result}, then implies that there is a 'last' solution for $\tau=\tau_0$ as well, and that either $u_{\tau_0}(\dot{S})$ is disjoint from $u_0(\dot{S})$ or agrees with it. In the latter case, the proof of theorem \ref{main-thm} is complete. In the first case we apply theorem \ref{IFT} again to $(\tilde{u}_{\tau_0},\gamma_{\tau_0},j_{\tau_0})$ producing a larger family of solutions. Because $\tau\mapsto f_{\tau}(z)$ is strictly monotone for each $z\in S$ and because the return time for each point on $u_0(\dot{S})$ is finite, the images of $u_{\tau}$ and $u_0$ must agree for some sufficiently large $\tau$ concluding the proof.

\begin{appendix}
\section{Some local computations near the punctures}\label{local-computations}
 In this appendix we will present some local computations needed for the proof of theorem \ref{IFT}. The issue is to show that the 1-forms
\[
 u_0^{\ast}\lambda\circ j_f-da_0\ \ \mbox{and}\ \ u_0^{\ast}\lambda+da_0\circ j_f
\]
are bounded on $\dot{S}$. We obtain in the second case of the theorem
\begin{eqnarray*}
 u_0^{\ast}\lambda\circ j_f-da_0 & = & u_0^{\ast}\lambda\circ(j_f-j_g)+\gamma_0\\
& = & dg\circ(j_f-j_g)+\gamma_0+v_0^{\ast}\lambda\circ(j_f-j_g).
\end{eqnarray*}
The first case can be treated as a special case: Here the objective is to show that the 1-form $v_0^{\ast}\lambda\circ(j_f-i)=v_0^{\ast}\lambda\circ(j_f-j_0)$ is bounded near the punctures. We again drop the subscript $\delta$ in the notation since we are only concerned with a local analysis near the binding, and all the forms $\lambda_{\delta}$ are identical there. We use coordinates $(\theta,r,\phi)$ near the binding. The contact structure is then generated by
\[
 \eta_1=\frac{\partial}{\partial r}=(0,1,0)\ ,\ \eta_2=-\gamma_2\frac{\partial}{\partial\theta}+\gamma_1\frac{\partial}{\partial \phi}=(-\gamma_2,0,\gamma_1).
\]
The projection onto the contact planes along the Reeb vector field is then given by
\[
 \pi_{\lambda}(v_1,v_2,v_3)=\frac{1}{\mu}(v_1\gamma'_1+v_3\gamma'_2)\,\eta_2+v_2\,\eta_1\ \mbox{with}\ \mu=\gamma_1\gamma'_2-\gamma'_1\gamma_2,
\]
and the flow of the Reeb vector field is given by
\[
 \phi_t(\theta,r,\phi)\,=\,(\theta+\alpha(r)t,r,\phi+\beta(r)t)
\]
where
\[
 \alpha(r)\,=\,\frac{\gamma'_2(r)}{\mu(r)}\ \ \mbox{and}\ \ \beta(r)\,=\,-\frac{\gamma_1'(r)}{\mu(r)}.
\]
The linearization of the flow $T\phi_{\tau}(\theta,r,\phi)$ preserves the contact structure. In the basis $\{\eta_1,\eta_2\}$ it is given by
\[
 T\phi_{\tau}(\theta,r,\phi)\,=\,\left(\begin{array}{cc}1 & 0\\ \tau A(r) & 1\end{array}\right)\ \mbox{with}\ A(r)=\frac{1}{\mu^2(r)}(\gamma''_2(r)\gamma_1'(r)-\gamma_1''(r)\gamma_2'(r)).
\]
The complex structure(s) we chose earlier in (\ref{cplx-str-normal-form}) had the following form near the binding with respect to the basis $\{\eta_1,\eta_2\}$:
\[
 J(\theta,r,\phi)=\left(\begin{array}{cc} 0 & -r\gamma_1(r)\\ \frac{1}{r\gamma_1(r)}& 0\end{array}\right).
\]
The induced complex structure $j_{\tau}$ on the surface is then given by
\[
 j_{\tau}(z)=[\pi_{\lambda}Tv_0(z)]^{-1}[T\phi_{\tau}(v_0(z))]^{-1}J(\phi_{\tau}(v_0(z)))T\phi_{\tau}(v_0(z))\pi_{\lambda}Tv_0(z).
\]
With $v_0(s,t)=(t,r(s),\alpha)$ we find that
\[
 \pi_{\lambda}Tv_0(s,t)=\left(\begin{array}{cc} r'(s)& 0\\0&   \frac{\gamma'_1(s)}{\mu(r(s))}\end{array}\right)
\]
so that
\begin{eqnarray*}
 j_{\tau} & = & \left(\begin{array}{cc} -\tau A(r)r\gamma_1(r)&-\frac{r\gamma_1(r)\gamma'_1(r)}{r'\mu(r)}\\\frac{r'\mu(r)}{r\gamma_1(r)\gamma'_1(r)}(1+\tau^2A^2(r)r^2\gamma_1^2(r))&\tau A(r)r\gamma_1(r)\end{array}\right)\\
& = & \left(\begin{array}{cc} -\tau A(r)r\gamma_1(r)& -1\\1+\tau^2A^2(r)r^2\gamma_1^2(r)& \tau A(r)r\gamma_1(r)\end{array}\right)\\
& = & j_0+\tau\,A(r)\,\gamma_1(r)\left(\begin{array}{cc} -1 & 0 \\ \tau A(r)\gamma_1(r) & 1\end{array}\right).
\end{eqnarray*}
and
\[
j_{\tau}-j_{\sigma}\,=\, A(r) r\gamma_1(r)(\tau-\sigma)\left(\begin{array}{cc} -1 &0\\(\tau+\sigma) A(r) r\gamma_1(r)& 1 \end{array}\right) 
\]
using the fact that $r(s)$ satisfies the differential equation
\[
 r'(s)=\frac{\gamma'_1(r(s))\gamma_1(r(s))r(s)}{\mu(r(s))}.
\]
With $v_0^{\ast}\lambda=\gamma_1(r)\,dt$ we obtain
\begin{eqnarray*}
 v_0^{\ast}\lambda\circ(j_{\tau}-j_{\sigma})|_{(s,t)} & = & (\tau-\sigma)A(r(s))r(s)\gamma_1^2(r(s))\cdot\\
& & \cdot\big[(\tau+\sigma)A(r(s))r(s)\gamma_1(r(s))ds+dt\big].
\end{eqnarray*}
Converting from coordinates $(s,t)$ on the half-cylinder to cartesian coordinates $x+iy=e^{-(s+it)}$ in the complex plane we get with $\rho=\sqrt{x^2+y^2}$
\[
 ds=-\frac{1}{\rho^2}(x\,dx+y\,dy)\ \ \mbox{and}\ \ dt=-\frac{1}{\rho^2}(x\,dy-y\,dx).
\]
Recall that $r(s)=c(s)e^{\kappa s}$ where $c(s)$ is a smooth function which converges to a constant as $s\rightarrow+\infty$, and we assumed that $\kappa\le-\frac{1}{2}$ and that $\kappa\notin{\mathbb Z}$. Another assumption was that $A(r)=O(r)$. Hence $r(s)$ is close to $\rho^{-\kappa}$ if $s$ is large (and $\rho$ is small), and $A(r(s))=O(\rho^{-\kappa})$. Also recall that $\gamma_1(r(s))=O(1)$. Summarizing, we need the expression
\[
A(r(s))r(s)\rho^{-2}\rho=O(\rho^{-2\kappa-1})
\]
to be bounded which amounts to $\kappa\le-\frac{1}{2}$. The same argument applied to the form $dg\circ (j_{\tau}-j_{\sigma})$ leads to the same conclusion.
\end{appendix}


\begin{thebibliography}{99}
\bibitem{Abbas-2}
C. Abbas, Pseudoholomorphic Strips in Symplectizations II: Fredholm Theory and Transversality, Comm. Pure and Appl. Math., Vol. 57, (2004), pp. 1-58
\bibitem{ACH}
C. Abbas, K. Cieliebak, H. Hofer, The Weinstein Conjecture for planar contact structures in dimension three, Commentarii Mathematici Helvetici, 80, (2005), pp. 771-793
\bibitem{AHL-1}
C. Abbas, H. Hofer, S. Lisi, Renormalization and energy quantization in Reeb dynamics, in preparation
\bibitem{AHL-2}
C. Abbas, H. Hofer, S. Lisi, Some applications of a homological perturbed Cauchy-Riemann equation, in preparation
\bibitem{part3}
C. Abbas, Pseudoholomorphic strips in symplectisations III: Embedding properties and Compactness, Journal of Symplectic Geometry, Vol. 2, No. 2, (2004), 219--260
\bibitem{A-notes}
C. Abbas, Introduction to Compactness Results in Symplectic Field Theory, Lecture Notes in preparation
\bibitem{AH}
C. Abbas and H. Hofer, Holomorphic curves and global questions in
contact geometry, to appear in Birkh\"{a}user
\bibitem{Ahlforsbook}
L. Ahlfors, Quasiconformal Mappings, Van Nostrand, Princeton (1966)
\bibitem{Bers-paper}
L. Ahlfors and L. Bers, Riemann's mapping theorem for variable metrics, Annals of Math., Vol 72, 2, (1960)
\bibitem{Alexander}
J. Alexander, A lemma on systems of knotted curves, Proc. Nat. Acad. Sci. USA, {\bf 9}, (1923), 93-95
\bibitem{Bers-lecture}
L. Bers, Riemann Surfaces, Lectures given at New York University, 1957--58
\bibitem{Bers-Nirenberg}
L. Bers and L. Nirenberg, On a representation theorem for linear elliptic systems with discontinuous coefficients and its applications, Convegno intern. sulle equazioni lineari alle derivate parziali, Trieste, 1954
\bibitem{BEHWZ}
F. Bourgeois, Y. Eliashberg, H. Hofer, K. Wysocki, E. Zehnder, Compactness results in Symplectic Field Theory, Geom. and Topology, 7, (2003), 799--888
\bibitem{Chern}
S.-S. Chern, An elementary proof of the existence of isothermal parameters on a surface, Proc. Amer. Math. Soc., Vol 6, No. 5, (1955), pp. 771--782.
\bibitem{Etnyre}
J. Etnyre, Planar open book decompositions and contact structures, IMRN  2004 (2004), no. 79, 4255-4267
\bibitem{Forster}
O. Forster, Riemann Surfaces, Springer 
\bibitem{Giroux}
E. Giroux, G\'{e}om\'{e}trie de contact: de la dimension trois vers les dimensions sup\'{e}rieures, Proceedings of the International Congress of Mathematicians, Vol. II (Beijing, 2002),  405--414, Higher Ed. Press, Beijing, 2002. 
\bibitem{Hofer-Weinstein-conj}
        Hofer H., Pseudoholomorphic curves in symplectizations with applications to the Weinstein conjecture in dimension three. Invent. Math. 114 (1993), no. 3, 515--563.
\bibitem{HZ-book} H. Hofer, E. Zehnder, Symplectic invariants and Hamiltonian dynamics, Birkh\"{a}user Advanced Texts: Basler Lehrb\"{u}cher, Birkh\"{a}user, Basel, 1994
\bibitem{HWZ-embedding} H. Hofer, K. Wysocki and E. Zehnder,
  Properties of pseudo-holomorphic curves in symplectisations II:
  Embedding controls and algebraic invariants, Geom. Funct. Anal. 5
  (1995),  no. 2, 270--328. 
\bibitem{HWZ-asymptotics} H. Hofer, K. Wysocki and E. Zehnder,
  Properties of pseudoholomorphic curves in
  symplectisations I: Asymptotics, Ann. Inst. H. Poincar\'{e} Anal. Non
  Lin\'{e}aire  13  (1996),  no. 3, 337--379.  
\bibitem{HWZ-2}
H. Hofer, K. Wysocki, and E. Zehnder, Properties of pseudoholomorphic curves in symplectizations. II. Geometric and Functional Analysis, volume 5; pp. 270 - 328, (1995)
\bibitem{HWZ-3}
H. Hofer, K. Wysocki, and E. Zehnder, Properties of pseudoholomorphic curves in symplectizations. III. Fredholm theory, Topics in nonlinear analysis, 381-475, Progr. Nonlinear Differential Equations Appl., 35, Birkhauser, Basel, 1999
\bibitem{H-3D}
H. Hofer, Holomorphic curves and real three-dimensional dynamics. In GAFA 2000 (Tel Aviv, 1999), Geom. Funct. Anal., Special Volume, Part II, 2000, 674–704
\bibitem{HWZ-fef}
H. Hofer, K. Wysocki, E. Zehnder, Finite energy foliations of tight three-spheres and Hamiltonian dynamics, Ann. of Math. (2) 157 (2003), no. 1, 125–255.
\bibitem{HWZ-sc}
H. Hofer, K. Wysocki and E. Zehnder, The dynamics on strictly convex energy surfaces in ${\mathbb R}^4$. Ann. Math. (2) 148 (1998), 197–289
\bibitem{Hummel}
  C. Hummel, Gromov's Compactness Theorem for Pseudoholomorphic Curves, Progress in Mathematics, Vol. 151, Birkh\"{a}user Verlag, 1997 
\bibitem{Lax}
P. Lax, Functional Analysis, Wiley, 2001
\bibitem{Lichtenstein}
L. Lichtenstein, Zur Theorie der konformen Abbildung. Konforme Abbildung nichtanalytischer singularit\"{a}tenfreier Fl\"{a}chenst\"{u}cke auf ebene Gebiete, Bull. Acad. Sc. Cracovie, A, (1916)
\bibitem{McDS}
D. McDuff and D. Salamon, J-holomorphic Curves and Symplectic Topology, AMS, 2004
\bibitem{MW}
M. J. Micallef and B. White, The structure of branch points in minimal surfaces and in pseudoholomorphic curves, Ann. of Math. (2)  141  (1995),  no. 1, 35--85
\bibitem{Morrey}
C. B. Morrey, On the solutions of quasilinear elliptic partial differential equations, Trans. Amer. Math. Soc., Vol. 43, (1938), 126--166
\bibitem{Rolfsen}
D. Rolfsen, Knots and Links, Publish or Perish Inc., Houston, Texas
\bibitem{tromba}
A. Tromba, Teichm\"{u}ller Theory in Riemannian Geometry, Lectures in Mathematics, ETH Z\"{u}rich, Birkh\"{a}user Verlag, 1992
\bibitem{Siefring}
R. Siefring, Relative Asymptotic Behavior of Pseudoholomorphic Half-Cylinders, Comm. Pure and Appl. Math., Vol. 61, (2008), pp. 1631-1684
\bibitem{Stein}
E. M. Stein, Singular Integrals and differentiability properties of functions, Princeton University Press, 1970 
\bibitem{Taubes-1}
C.H. Taubes, The Seiberg-Witten equations and the Weinstein conjecture,  Geom. Topol.  11  (2007), 2117--2202 
\bibitem{Taubes-2}
C.H. Taubes, The Seiberg-Witten equations and the Weinstein conjecture II: More closed integral curves of the Reeb vector field, preprint (2007), arXiv [pdf]  	math.SG/0702366
\bibitem{TW}
W. Thurston, H. Winkelnkemper, On the existence of contact forms, Proc. AMS, {\bf 52}, (1975), 345-347
\bibitem{Wendl}
C. Wendl, Open book decompositions and stable Hamiltonian structures, preprint (2008), arXiv:0808.3220v1 [math.SG]
\bibitem{Wendl-2}
C. Wendl, Finite Energy Foliations on Overtwisted Contact Manifolds, Geometry \& Topology 12 (2008) 531-616
\bibitem{Wendl-3}
C. Wendl, Strongly Fillable Contact Manifolds and J-holomorphic Foliations, preprint (2008), arXiv:0806.3193v2 [math.SG]
\end{thebibliography}
\end{document}